\newtheorem*{definition*}{Definition}
\definecolor{expert}{HTML}{008000}
\definecolor{error}{HTML}{f96565}
\newcommand{\tikzAngleOfLine}{\tikz@AngleOfLine}
\def\tikz@AngleOfLine(#1)(#2)#3{%
\pgfmathanglebetweenpoints{%
\pgfpointanchor{#1}{center}}{%
\pgfpointanchor{#2}{center}}
\pgfmathsetmacro{#3}{\pgfmathresult}%
}
\declaretheoremstyle[
    headfont=\normalfont\bfseries, 
    bodyfont = \normalfont\itshape]{mystyle} 
\newcommand*{\BraceAmplitude}{0.4em}%
\newcommand*{\VerticalOffset}{0.5ex}%
\newcommand*{\HorizontalOffset}{0.0em}%
\newcommand*{\blocktextwid}{3.0cm}%
\NewDocumentCommand{\InsertLeftBrace}{%
	O{} 
	O{\HorizontalOffset,\VerticalOffset} 
	O{\blocktextwid} 
	m   
	m   
	m   
}{%
	\begin{tikzpicture}[overlay,remember picture]
	\coordinate (Brace Top)    at ($(#4.north) + (#2)$);
	\coordinate (Brace Bottom) at ($(#5.south) + (#2)$);
	\draw [decoration={brace, amplitude=\BraceAmplitude}, decorate, thick, draw=black, #1]
	(Brace Bottom) -- (Brace Top) 
	node [pos=0.5, anchor=east, align=left, text width=#3, color=black, xshift=\BraceAmplitude] {#6};
	\end{tikzpicture}%
}%
\NewDocumentCommand{\InsertRightBrace}{%
	O{} 
	O{\HorizontalOffset,\VerticalOffset} 
	O{\blocktextwid} 
	m   
	m   
	m   
}{%
	\begin{tikzpicture}[overlay,remember picture]
	\coordinate (Brace Top)    at ($(#4.north) + (#2)$);
	\coordinate (Brace Bottom) at ($(#5.south) + (#2)$);
	\draw [decoration={brace, amplitude=\BraceAmplitude}, decorate, thick, draw=black, #1]
	(Brace Top) -- (Brace Bottom) 
	node [pos=0.5, anchor=west, align=left, text width=#3, color=black, xshift=\BraceAmplitude] {#6};
	\end{tikzpicture}%
}%
\NewDocumentCommand{\InsertTopBrace}{%
	O{} 
	O{\HorizontalOffset,\VerticalOffset} 
	O{\blocktextwid} 
	m   
	m   
	m   
}{%
	\begin{tikzpicture}[overlay,remember picture]
	\coordinate (Brace Top)    at ($(#4.west) + (#2)$);
	\coordinate (Brace Bottom) at ($(#5.east) + (#2)$);
	\draw [decoration={brace, amplitude=\BraceAmplitude}, decorate, thick, draw=black, #1]
	(Brace Top) -- (Brace Bottom) 
	node [pos=0.5, anchor=south, align=left, text width=#3, color=black, xshift=\BraceAmplitude] {#6};
	\end{tikzpicture}%
}%
\definecolor{cof}{RGB}{219,144,71}
\definecolor{pur}{RGB}{186,146,162}
\definecolor{greeo}{RGB}{91,173,69}
\definecolor{greet}{RGB}{52,111,72}
\theoremstyle{plain}
\newtheorem{thm}{Theorem}[section]
\newtheorem{lemma}[thm]{Lemma}
\newtheorem{remark}[thm]{Remark}
\newtheorem{corollary}[thm]{Corollary}
\newtheorem{definition}[thm]{Definition}
\def \bP {\mathbb{P}}
\def \bQ {\mathbb{Q}}
\def \bC {\mathbb{C}}
\def \bE {\mathbb{E}}
\def \bR {\mathbb{R}}
\def \var {\mathrm{Var}}
\def\1{\mathbbm{1}}
\newcommand{\stepa}[1]{\overset{\rm (a)}{#1}}
\newcommand{\stepb}[1]{\overset{\rm (b)}{#1}}
\newcommand{\stepc}[1]{\overset{\rm (c)}{#1}}
\newcommand{\stepd}[1]{\overset{\rm (d)}{#1}}
\newcommand{\stepe}[1]{\overset{\rm (e)}{#1}}
\newcommand{\stepf}[1]{\overset{\rm (f)}{#1}}
\newcommand{\naturals}{\mathbb{N}}
\newcommand{\TV}{{\sf TV}}
\newcommand{\LC}{{\sf LC}}
\newcommand{\KL}{{\sf KL}}
\newcommand{\pth}[1]{\left( #1 \right)}
\newcommand{\qth}[1]{\left[ #1 \right]}
\newcommand{\sth}[1]{\left\{ #1 \right\}}
\newcommand{\bpth}[1]{\Bigg( #1 \Bigg)}
\newcommand{\bqth}[1]{\Bigg[ #1 \Bigg]}
\newcommand{\Bern}{\text{\rm Bern}}
\newcommand{\Poi}{\text{\rm Poi}}
\newcommand{\Unif}{\text{\rm Unif}}
\newcommand{\indc}[1]{{\mathbf{1}_{\left\{{#1}\right\}}}}
\definecolor{myblue}{rgb}{.8, .8, 1}
\definecolor{mathblue}{rgb}{0.2472, 0.24, 0.6} 
\definecolor{mathred}{rgb}{0.6, 0.24, 0.442893}
\definecolor{mathyellow}{rgb}{0.6, 0.547014, 0.24}
\newcommand{\sfC}{{\mathsf{C}}}
\newcommand{\sfD}{{\mathsf{D}}}
\newcommand{\sfH}{{\mathsf{H}}}
\newcommand{\sfK}{{\mathsf{K}}}
\newcommand{\calC}{{\mathcal{C}}}
\newcommand{\calD}{{\mathcal{D}}}
\newcommand{\calH}{{\mathcal{H}}}
\newcommand{\calN}{{\mathcal{N}}}
\newcommand{\calO}{{\mathcal{O}}}
\newcommand{\calP}{{\mathcal{P}}}
\newcommand{\calX}{{\mathcal{X}}}
\newcommand{\rmd}{\mathrm{d}}
\newcommand{\Perm}{\mathrm{Perm}}
\newcommand{\icom}{\mathrm{i}}
\newcommand{\trace}{\mathrm{Tr}}
\newcommand{\calCN}{\calC\calN}
\newcommand{\Law}{\mathrm{Law}}
\crefname{lemma}{Lemma}{Lemmas}
\Crefname{lemma}{Lemma}{Lemmas}
\crefname{thm}{Theorem}{Theorems}
\Crefname{thm}{Theorem}{Theorems}
\Crefname{assumption}{Assumption}{Assumptions}
\newcommand{\Capa}{\sfC_{\chi^2}(\calP)}
\newcommand{\Diam}{\sfD_{\chi^2}(\calP)}
\newcommand{\DiamH}{\sfD_{H^2}(\calP)}
\newcommand{\AffH}{\mathsf{\Delta}_{H^2}(\calP)}
\begin{document}
\title{Approximate independence of permutation mixtures}
\author{Yanjun Han, Jonathan Niles-Weed}
\maketitle

\begin{abstract}
We prove bounds on statistical distances between high-dimensional exchangeable mixture distributions (which we call \emph{permutation mixtures}) and their i.i.d.\ counterparts.
Our results are based on a novel method for controlling $\chi^2$ divergences between exchangeable mixtures, which is tighter than the existing methods of moments or cumulants.
At a technical level, a key innovation in our proofs is a new Maclaurin-type inequality for elementary symmetric polynomials of variables that sum to zero and an upper bound on permanents of doubly-stochastic positive semidefinite matrices.
We obtain as a corollary a new de Finetti-style theorem (in the language of Diaconis and Freedman, 1987), as well as several new statistical results, including a differential privacy guarantee for the ``shuffled privacy model'' with Gaussian noise and improved generic consistency guarantees for empirical Bayes procedures in compound decision problems.
\end{abstract}


\section{Introduction}
Let $n$ be a positive even integer and $\mu$ a positive real number. Consider two probability distributions on $\bR^n$:
\begin{itemize}
	\item Under $\bP_n$, the observation $X$ is of the form $\vartheta + Z$, where $\vartheta$ is uniformly distributed on the subset of \emph{balanced} vectors in $\{\pm \mu\}^n$ (i.e., vectors with exactly $n/2$ entries equal to $-\mu$ and $n/2$ equal to $+\mu$) and $Z \sim \calN(0,  I_n)$ is independent Gaussian noise;
	\item Under $\bQ_n$, the observation $X$ is of the form $\vartheta + Z$, where $\vartheta$ is uniformly distributed on $\{\pm \mu\}^n$ and $Z \sim \calN(0, I_n)$ is independent Gaussian noise.
	Note that the entries of $X$ under $\bQ_n$ are themselves i.i.d., and that the marginal distribution of each coordinate of $X$ under $\bQ_n$ and $\bP_n$ is the same.
\end{itemize}
In other words, let $\nu_\bP$ denote the distribution of $n$ uniformly random draws from the multiset $\{-\mu, \dots, -\mu, +\mu, \dots, +\mu\}$ \emph{without replacement} and $\nu_\bQ$ the corresponding distribution of draws \emph{with replacement}.
Then $\bP_n = \nu_\bP \star \calN(0, I_n)$ and $\bQ_n = \nu_\bQ \star \calN(0,  I_n)$.
For which values of $n$ and $\mu$ are $\bP_n$ and $\bQ_n$ statistically close?

The law of large numbers implies that a random vector $\vartheta$ with i.i.d.\ $\{\pm \mu\}$ entries will be nearly balanced, and, if $\mu \lesssim 1$, the Gaussian noise will make it difficult to detect whether $\vartheta$ is perfectly balanced or only nearly so.
It is therefore natural to conjecture that $\bP_n$ and $\bQ_n$ will not be distinguishable from each other as long as $\mu$ is not too large.
Surprisingly, however, as we show in \Cref{sec:failure}, existing approaches to bounding statistical distances between mixtures fail to establish this fact.
The goal of this paper is to develop new strategies for analyzing such high-dimensional mixtures, which achieve tight bounds.

As an example, a corollary of one of our main results shows that for the model described above,
\begin{equation}\label{eq:balanced_example}
	\chi^2(\bP_n \| \bQ_n) = \begin{cases}
		O(\mu^{4}) &\text{if } \mu \leq 1, \\
		O(\exp(\mu^2)) &\text{if } \mu > 1.
	\end{cases}
\end{equation}
Strikingly, \eqref{eq:balanced_example} implies that the $\chi^2$ divergence between these two high-dimensional models is bounded \emph{independent} of the dimension $n$---in particular, the sequence $\bP_n$ is contiguous to $\bQ_n$ in the sense of Le Cam~\cite{Le-60}, so that asymptotic properties of the mixture $\bP_n$ can be analyzed under the simpler product measure $\bQ_n$.
The bound in~\eqref{eq:balanced_example} also reveals other features: since $\chi^2(\calN(+\mu, 1) \| \calN(-\mu, 1)) = e^{4\mu^2} - 1$, the $\chi^2$ divergence between the two models is 
polynomially smaller than the ``single-letter'' $\chi^2$ divergence that would arise from comparing the two univariate marginal distributions that appear in the definition of $\bP_n$ and $\bQ_n$.
As we shall show, up to constants, the bounds in~\eqref{eq:balanced_example} are tight.

Our results hold for much more general models, which we call \emph{permutation mixtures}.
Let $\calP$ be a collection of probability measures on a fixed probability space.
Given $P_1,\cdots,P_n\in \calP$, we define two mixture models: 
\begin{itemize}
    \item The distribution $\bP_n$ of the observation $X=(X_1,\cdots,X_n)$ is the ``permutation mixture'' defined by
    \begin{align}\label{eq:bP_n}
        (X_1,\cdots,X_n) \sim \bE_{\pi\sim \Unif(S_n)}\qth{ \otimes_{i=1}^n P_{\pi(i)} }; 
    \end{align}
    \item The distribution $\bQ_n$ of the observation $X=(X_1,\cdots,X_n)$ is an i.i.d.\ product of one-dimensional mixtures defined by
    \begin{align}\label{eq:bQ_n}
        (X_1,\cdots,X_n) \sim \pth{\frac{1}{n}\sum_{i=1}^n P_i}^{\otimes n}. 
    \end{align}
\end{itemize}
As before, $\bP_n$ is a measure with exchangeable but not independent coordinates, whereas $\bQ_n$ is a product measure whose one-dimensional marginals agree with those of $\bP_n$.
Note that the simple example given above corresponds to the setting where $\calP = \{\calN(-\mu, 1), \calN(+\mu, 1)\}$ and $P_1, \dots, P_n$ are divided evenly between the two possibilities.
Our main contribution is a set of new techniques for bounding $\chi^2(\bP_n \| \bQ_n)$ for general permutation mixtures.

Permutation mixtures arise, explicitly or implicitly, in a number of statistical settings in which either the data or parameter space enjoy permutation symmetry.
However, the lack of general techniques for analyzing such mixtures has meant that researchers have had to turn to \textit{ad hoc} arguments, many of which lead to overly complicated proofs or require unnecessary assumptions.
The tools we develop here allow many of these results to be strengthened and simplified.
In \Cref{sec:statistics}, we illustrate this phenomenon with vignettes from high-dimensional statistics, differential privacy, and empirical Bayes estimation, showing how our bounds improve and clarify existing statistical results.

More generally, the study of permutation mixtures is motivated by important theoretical questions in a number of areas: 


\vspace{0.2cm} \noindent \textbf{de Finetti-style theorems.} The classical de Finetti theorem~\cite{de1929funzione} asserts that an infinite exchangeable sequence of binary random variables is a mixture of i.i.d.\ Bernoulli sequences. Since the work of Diaconis~\cite{Dia77}, there has been interest in obtaining quantitative forms of this theorem for finite exchangeable sequences~\cite{stam1978distance,diaconis1980finite,gavalakis2021information}, and, more broadly, to obtain ``de Finetti-style theorems'' in general settings~\cite{diaconis1987dozen}.
Our main results, which establish that the exchangeable measure $\bP_n$ is close to the product measure $\bQ_n$, are theorems of this type.
As \Cref{thm:deFinetti} shows, our bounds yield new approximation results for general exchangeable sequences.

\vspace{0.2cm} \noindent \textbf{Mean-field approximation.} Whether a high-dimensional distribution can be accurately approximated by a product measure is a central question in probability theory.
These ``mean-field'' approximations are useful to study the behavior of large random systems with weak dependence and open the door to precise asymptotic computations of, e.g., the free energy for complicated models arising from statistical physics~\cite{Par88,JaiRisKoe19,LacMukYeu24}.
The present case can be seen as a simple example of this philosophy:  is the interaction between coordinates in a permutation mixture weak enough that the mixture distribution $\bP_n$ is close to a measure with i.i.d.\ coordinates?
Our main theorems answer this question in the affirmative.

\vspace{0.2cm} \noindent \textbf{Information geometry of high-dimensional mixture models.}
High-dimensional mixtures---such as the measures $\bP_n$ and $\bQ_n$ defined above---are ubiquitous models in theoretical statistics (see, e.g., \cite{lepski1999estimation,cai2011testing,ingster2012nonparametric,wu2016minimax,jiao2018minimax}). The analysis of such measures is routine in the univariate case (or, more generally, for product measures), but obtaining sharp bounds is difficult for high-dimensional mixtures whose coordinates are not independent. The setting we study is one in which we can rigorously compare $\bP_n$ to a simpler, independent counterpart $\bQ_n$. This comparison may be directly relevant in analyzing mixtures with permutation invariance.
Moreover, as we highlight in \Cref{sec:techniques}, our proofs are based on new techniques for bounding divergences between high-dimensional mixtures, which may be of broader applicability.

\vspace{0.2cm} Our main results are stated in terms of two information-theoretic quantities: given two probability measures $P$ and $Q$ on the same probability space, we define the $\chi^2$ divergence and squared Hellinger distance:
\begin{equation*}
	\chi^2(P\|Q) = \int \frac{(\rmd P - \rmd Q)^2}{\rmd Q}, \qquad H^2(P,Q) = \int \pth{\sqrt{\rmd P} - \sqrt{\rmd Q}}^2.
\end{equation*}
Both are fundamental measures of the statistical similarity between $P$ and $Q$; see, e.g.,~\cite{Tsy09}.

We now present our main bounds of the $\chi^2$ divergence, $\chi^2(\bP_n \| \bQ_n)$, between the permutation mixture $\bP_n$ in \eqref{eq:bP_n} and its i.i.d.\ counterpart $\bQ_n$ in \eqref{eq:bQ_n}. Our upper bounds will depend on several quantities of the distribution family $\calP$, defined as follows. 

\begin{definition}\label{defn:capacity_diam}
For a given family $\calP$ of probability distributions over the same space, define: 
\begin{enumerate}
    \item The \emph{$\chi^2$ channel capacity}, denoted by $\Capa$: 
    \begin{align*}
\Capa := \sup_{\rho\in \Delta(\calP)} I_{\chi^2}(P; X) = \sup_{\rho\in \Delta(\calP)} \bE_{P \sim \rho}\qth{\chi^2( P \|\bE_{P' \sim \rho}[P']  )}, 
\end{align*}
where $P\sim \rho$ and $X|P\sim P$ in the $\chi^2$ mutual information, and $\Delta(\calP)$ denotes the class of all prior distributions over $\calP$; 
\item The \emph{$\chi^2$ diameter}, denoted by $\Diam:= \sup_{P_1, P_2\in \calP} \chi^2(P_1 \| P_2)$; 
\item The \emph{$H^2$ diameter}, denoted by $\DiamH := \sup_{P_1, P_2\in \calP} H^2(P_1, P_2)$; 
\item The \emph{maximum $H^2$ singularity}, denoted by $\AffH$: 
\begin{align*}
\AffH := \pth{1-\frac{\DiamH}{2}}^{-2}. 
\end{align*}
\end{enumerate}

\end{definition}

When the $\chi^2$ divergence is replaced by the Kullback--Leibler (KL) divergence in \Cref{defn:capacity_diam}, the $\chi^2$ channel capacity coincides with the usual notion of channel capacity in information theory. We have $\Capa\le \Diam$ by convexity, and $\AffH \le \Diam + 1$ by \cite[Eqn. (7.33)]{polyanskiy2024information}. In addition, $H^2(P_1,P_2)=2$ if and only if $P_1$ and $P_2$ are mutually singular, in which case it is easy to see that $\chi^2(\bP_n \|\bQ_n)$ must grow with $n$ in the worst case (cf. \Cref{lemma:tightness}). Therefore, the quantity $\AffH$ measures the extent of singularity of $\calP$. Finally, we note that all quantities (whenever finite) depend only on $\calP$ and not on the dimension $n$ of $\bP_n$ or $\bQ_n$. We present our main theorem below, which upper bounds the $\chi^2$ divergence using only the above quantities. 

\begin{thm}\label{thm:main}
The following upper bounds hold: 
\begin{enumerate}
    \item If $\Capa<\infty$, then
    \begin{align}\label{eq:upper_bound_1}
        \chi^2(\bP_n \| \bQ_n) \le 10\sum_{\ell=2}^n \Capa^\ell; 
    \end{align}
    \item If $\AffH<\infty$ and $\Capa<\infty$, then
    \begin{align}\label{eq:upper_bound_2}
        \chi^2(\bP_n \| \bQ_n) \le (e\AffH)^{\Capa} - 1; 
    \end{align}
    \item If $\Diam < \infty$, then 
    \begin{align}\label{eq:upper_bound_3}
        \chi^2(\bP_n \| \bQ_n)\le (1+\Diam)^{1+\Capa} - 1. 
    \end{align}
\end{enumerate}
\end{thm}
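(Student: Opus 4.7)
First I would reduce $\chi^2(\bP_n\|\bQ_n)$ to a permanent computation. Set $\bar P = \frac{1}{n}\sum_i P_i$, $f_i = \rmd P_i/\rmd\bar P$, and $A_{ij} = \int \frac{\rmd P_i\,\rmd P_j}{\rmd\bar P} = \langle f_i, f_j\rangle_{L^2(\bar P)}$. A direct expansion of $\int(\rmd\bP_n/\rmd\bQ_n)^2\,\rmd\bQ_n$ together with the change of variables $\tau = \sigma\pi^{-1}$ yields
\begin{equation*}
\chi^2(\bP_n\|\bQ_n)+1 \;=\; \bE_{\tau\sim\Unif(S_n)}\prod_{i=1}^n A_{i,\tau(i)} \;=\; \frac{\Perm(A)}{n!} \;=\; \bigl\|f_1\vee\cdots\vee f_n\bigr\|^2_{\mathrm{Sym}^n L^2(\bar P)}.
\end{equation*}
The Gram matrix $A$ is PSD with nonnegative entries, satisfies $A\mathbf{1} = n\mathbf{1}$, and has $\tr(A) = n+\sum_i\chi^2(P_i\|\bar P) \le n(1+\Capa)$, the last bound because the uniform empirical prior $\rho = n^{-1}\sum_i\delta_{P_i}$ is feasible in the supremum defining $\Capa$. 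Thus the problem reduces to controlling the permanent of a PSD matrix $A$ whose normalization $A/n$ is doubly stochastic.

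Writing $g_i := f_i - 1$, one has $\bE_{\bar P}[g_i] = 0$ and, crucially, $\sum_{i=1}^n g_i\equiv 0$ in $L^2(\bar P)$ because $\bar P$ is the average of the $P_i$. Since $\langle 1, g_j\rangle_{L^2(\bar P)} = 0$, expanding $\bigvee_i(1+g_i)$ in powers of $g$ produces terms of different ``$g$-degree'' that are pairwise orthogonal (any nonzero term in the relevant permanent would have to match every $g$ on one side to a $g$ on the other), giving the exact identity
\begin{equation*}
\chi^2(\bP_n\|\bQ_n) \;=\; \sum_{k=2}^{n}\binom{n}{k}^{-1}\|H_k\|^2_{\mathrm{Sym}^k L^2(\bar P)}, \qquad H_k := \sum_{|S|=k}\bigvee_{i\in S} g_i.
\end{equation*}
The $k=0$ term supplies the $+1$, and the $k=1$ term vanishes because $H_1 = \sum_i g_i \equiv 0$. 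In matrix form, $\|H_k\|^2 = (k!)^{-1}\sum_{|S|=|T|=k}\Perm(\tilde A[S,T])$, where $\tilde A_{ij} := A_{ij} - 1 = \langle g_i, g_j\rangle$ is PSD with $\tilde A\mathbf{1} = 0$ and $\tr(\tilde A)\le n\Capa$.

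The decisive step is to bound $\binom{n}{k}^{-1}\|H_k\|^2$ for each $k\ge 2$. For bound~\eqref{eq:upper_bound_1} the target is $\binom{n}{k}^{-1}\|H_k\|^2\lesssim\Capa^k$. In the idealized case where the $g_i$'s were pairwise orthogonal, $\|H_k\|^2$ would collapse to $e_k(\|g_i\|^2)/k!$, and classical Maclaurin applied to the $\mu_i = \|g_i\|^2$ together with $\sum_i \mu_i \le n\Capa$ would already deliver $(k!)^{-1}\Capa^k$. The general case calls for the two new ingredients highlighted in the abstract: an upper bound on permanents of doubly-stochastic PSD matrices (to control $\Perm(\tilde A[S,T])$ by spectral data of $\tilde A$) and a Maclaurin-type inequality for elementary symmetric polynomials of zero-sum variables (to exploit $\sum_i g_i = 0$). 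Summing the resulting series in $k\ge 2$ yields~\eqref{eq:upper_bound_1}. Bounds~\eqref{eq:upper_bound_2} and~\eqref{eq:upper_bound_3} then follow by sharpening the $k$-th estimate with the extra structure: $\AffH$ gives uniform pointwise control on the $f_i$ via the Hellinger affinity $(1-H^2/2)$, and a geometric resummation produces $(e\AffH)^{\Capa}-1$; similarly $\chi^2(P_i\|P_j)\le\Diam$ uniformly bounds $\|g_i\|_{L^2}^2$, collapsing the sum into $(1+\Diam)^{1+\Capa}-1$. The principal obstacle throughout is extracting the $\binom{n}{k}^{-1}$ scaling in this last step: naive Cauchy--Schwarz on $\sum_{|S|=|T|=k}\Perm(\tilde A[S,T])$ loses this factor and produces bounds that scale with $n$, so the PSD/doubly-stochastic structure of $A$ and the zero-sum constraint on the $g_i$ must be exploited in tandem via the new technical tools.
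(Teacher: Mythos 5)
Your reduction to a permanent, the Gram matrix $A$, and the doubly centered orthogonal decomposition $\chi^2(\bP_n\|\bQ_n)=\sum_{k\ge 2}\binom{n}{k}^{-1}\|H_k\|^2$ (with the $k=1$ term killed by $\sum_i g_i=0$) are correct and coincide exactly with the paper's framework (Sections 3--4 and the identities $S_\ell=\binom{n}{\ell}R_\ell=\frac{(n-\ell)!}{n!}n^\ell T_\ell$ of \Cref{lemma:RST}). The problem is that everything after ``the decisive step'' is a statement of intent rather than a proof. The two tools you invoke by name are precisely the theorem's novel content, and neither is stated precisely nor proved: (i) the zero-sum Maclaurin inequality $|e_\ell(x)|\le\sqrt{10\binom{n}{\ell}}\,\pth{\frac1n\sum_i x_i^2}^{\ell/2}$ for $\sum_i x_i=0$ (\Cref{thm:ESP}), which the paper establishes by a saddle-point analysis after reducing to binary-support vectors, and which is applied to matrices with zero row sums only after a reduction to identical rows via Banach's theorem on symmetric multilinear forms (\Cref{lemma:banach}, yielding \Cref{lemma:hadamard}); and (ii) the permanent bound $\Perm(A)\le\frac{n!}{n^n}\prod_{i\ge2}(1-\lambda_i)^{-1}$ for doubly stochastic PSD matrices, which the paper obtains from the generalized Cauchy--Binet expansion together with the complex-Gaussian identity $\Perm(PP^\top)=\bE\prod_i|(Pz)_i|^2$ and AM--GM (\Cref{lemma:UB-entire-sum}). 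Without these, the claimed $\Capa^k$ rate per level and the subsequent resummations are unsubstantiated.

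Two more specific points. First, for \eqref{eq:upper_bound_1} you propose to control $\sum_{|S|=|T|=k}\Perm(\tilde A[S,T])$ ``by spectral data of $\tilde A$''; the paper's proof of \eqref{eq:upper_bound_1} does \emph{not} go through the Gram matrix at this stage --- it applies the zero-row-sum permanent inequality pointwise in $X$ to the matrix $a_{ij}=\frac{\rmd\Psi_j}{\rmd\overline P}(X_i)$ and only then integrates over $\bQ_n$, which is what produces the clean $10\,\Capa^\ell$ per level; the spectral route (\Cref{lemma:UB-individual-sum}, via the complex version of the inequality) costs an extra factor $3\sqrt{\ell+1}$. Second, your sketch of \eqref{eq:upper_bound_3} (``$\Diam$ uniformly bounds $\|g_i\|^2$, collapsing the sum'') is not how that bound arises: both \eqref{eq:upper_bound_2} and \eqref{eq:upper_bound_3} follow from the same estimate $\chi^2+1\le\prod_{i\ge2}(1-\lambda_i)^{-1}$ combined with the spectral gap $1-\lambda_2\ge 1/\AffH$ (proved entrywise via the Hellinger affinity and a graph-Laplacian argument in \Cref{lemma:A-property}), the trace bound $\sum_{i\ge2}\lambda_i\le\Capa$, and, for \eqref{eq:upper_bound_3}, the comparison $\AffH\le 1+\Diam$ with the monotonicity of $x\mapsto\frac{x\log x}{x-1}$. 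So the roadmap is right and matches the paper, but the heart of the theorem --- the two inequalities you defer to --- remains to be proved.
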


\Cref{thm:main} shows that if $\Capa\le 1-\delta$ for some constant $\delta>0$, then the $\chi^2$ divergence is upper bounded by a dimension-independent constant $O_\delta(\Capa^2)$. In addition, even if $\Capa\ge 1$ but $\AffH<\infty$, the $\chi^2$ divergence is still upper bounded by a large albeit dimension-independent constant. This shows that the contiguity relation $\{\bP_n\} \triangleleft \{\bQ_n\}$ is not unique to the Gaussian family but holds for a large class of probability families $\calP$. Specializing $\calP$ to specific classes of distributions leads to the following corollary. 

\begin{corollary}\label{cor:specific_families}
The following results hold for specific families:
\begin{enumerate}
    \item Gaussian family $\calP = \{\calN(\theta,1): |\theta|\le \mu\}$: there exists an absolute constant $\mu_c>0$ such that
    \begin{align*}
    \chi^2(\bP_n \| \bQ_n) = \begin{cases}
        O(\mu^4) &\text{if } \mu \le \mu_c, \\
        \exp(O(\mu^3)) - 1 &\text{if } \mu > \mu_c. 
    \end{cases}
    \end{align*}
    \item Gaussian family with small support $\calP = \{\calN(\theta,1): |\theta| \le \mu, \theta \in \Theta\}$ with $|\Theta|<\infty$:
    there exists an absolute constant $\mu_c>0$ such that
    \begin{align*}
    	\chi^2(\bP_n \| \bQ_n) = \begin{cases}
    		O(\mu^4) &\text{if } \mu \le \mu_c, \\
    		\exp(O(|\Theta|\mu^2)) - 1 &\text{if } \mu > \mu_c. 
    	\end{cases}
    \end{align*}
    \item Bernoulli family $\calP = \{\Bern(p): p\in [\varepsilon,1-\varepsilon]\}$: there exists an absolute constant $\varepsilon_c\in (0,1/2)$ such that
    \begin{align*}
        \chi^2(\bP_n \| \bQ_n) = \begin{cases}
            O((1-2\varepsilon)^4) &\text{if } \varepsilon \ge \varepsilon_c, \\
            O\pth{\frac{1}{\varepsilon}} &\text{if } 0<\varepsilon < \varepsilon_c. 
        \end{cases}
    \end{align*}
    \item Poisson family $\calP = \{\Poi(\lambda): \lambda\in [0,M]\}$: it holds that
    \begin{align*}
        \chi^2(\bP_n \| \bQ_n) = \begin{cases}
            O(M^2) &\text{if } M \le 1, \\
            \exp\pth{ O(M^{3/2}) } - 1 &\text{if } M>1. 
        \end{cases}
    \end{align*}
\end{enumerate}
\end{corollary}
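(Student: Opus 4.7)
The plan is to apply \Cref{thm:main} to each of the four families, choosing among \eqref{eq:upper_bound_1}, \eqref{eq:upper_bound_2}, and \eqref{eq:upper_bound_3} according to the parameter regime. For each family the work splits naturally into two pieces: closed-form computation of $\Diam$, $\DiamH$, and hence $\AffH$; and a careful upper bound on the $\chi^2$ channel capacity $\Capa$.

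The closed-form quantities are immediate. From $\chi^2(\calN(\theta_1,1)\|\calN(\theta_2,1))=e^{(\theta_1-\theta_2)^2}-1$ and the Gaussian Hellinger affinity $e^{-(\theta_1-\theta_2)^2/8}$ one extracts $\AffH=e^{\mu^2}$ for the Gaussian family. The Bernoulli identities $\chi^2(\Bern(p_1)\|\Bern(p_2))=(p_1-p_2)^2/(p_2(1-p_2))$ and Hellinger affinity $\sqrt{p_1p_2}+\sqrt{(1-p_1)(1-p_2)}$ give $\AffH=1/(4\varepsilon(1-\varepsilon))$. Finally $\chi^2(\Poi(\lambda_1)\|\Poi(\lambda_2))=e^{(\lambda_1-\lambda_2)^2/\lambda_2}-1$ (so $\Diam=\infty$ since $\lambda=0$ is in the family), while the Poisson Hellinger affinity $e^{-(\sqrt{\lambda_1}-\sqrt{\lambda_2})^2/2}$ gives $\AffH=e^M$. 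In each case $\DiamH$ is attained at the extreme endpoints of the parameter range.

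The heart of the argument is estimating $\Capa$. In a small-parameter regime, the variational formula for $\Capa$ combined with a local quadratic expansion of $\chi^2$ yields $\Capa\le c\mu^2$ (Gaussian), $\Capa\le cM$ (Poisson), and $\Capa\le(1-2\varepsilon)^2$ (Bernoulli). The Bernoulli bound I get via the explicit identity $\bE_\rho[\chi^2(\Bern(p)\|\Bern(\bar p))]=\var_\rho(p)/(\bar p(1-\bar p))$, then the Bhatia--Davis inequality $\var_\rho(p)\le(\bar p-\varepsilon)(1-\varepsilon-\bar p)$, and then maximizing the resulting ratio over $\bar p$, which occurs at $\bar p=1/2$ and gives $(1-2\varepsilon)^2$ (hence also $\Capa\le 1$ uniformly in $\varepsilon$). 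In a large-parameter regime, the generic bound $\Capa\le\Diam$ is far too weak; instead I would exhibit an explicit near-optimal prior. Taking the uniform prior on $[-\mu,\mu]$ produces a mixture density $m(x)\gtrsim 1/\mu$ on the bulk support of each $\calN(\theta,1)$, so $\chi^2(\calN(\theta,1)\|m)=O(\mu)$ and thus $\Capa=O(\mu)$; for the finite-support Gaussian, the uniform prior on $\Theta$ gives $m(x)\ge\phi(x-\theta)/|\Theta|$ and hence $\Capa\le|\Theta|-1$; for Poisson with large $M$, I would use a prior supported on a $\sqrt{M}$-point net of $[0,M]$ (equivalently, reducing to the Gaussian case via the variance-stabilizing map $\Poi(\lambda)\approx\calN(\lambda,\lambda)$), giving $\Capa=O(\sqrt M)$.

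The final step is to plug these estimates into \Cref{thm:main}. In the small-parameter regime, $\Capa<1$ and \eqref{eq:upper_bound_1} yields $\chi^2(\bP_n\|\bQ_n)\lesssim\Capa^2$, producing the claimed $O(\mu^4)$, $O((1-2\varepsilon)^4)$, and $O(M^2)$. In the large-parameter regime, \eqref{eq:upper_bound_2} yields
\begin{equation*}
\chi^2(\bP_n\|\bQ_n)\le\exp\bigl(\Capa(1+\log\AffH)\bigr)-1,
\end{equation*}
which specializes to $\exp(O(\mu^3))-1$ (Gaussian), $\exp(O(|\Theta|\mu^2))-1$ (small-support Gaussian), $O(1/\varepsilon)$ (Bernoulli, where $\Capa\le 1$ ensures $(e\AffH)^\Capa\le e\AffH$), and $\exp(O(M^{3/2}))-1$ (Poisson). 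The main obstacle in this program is the large-parameter bound on $\Capa$ for the Gaussian and Poisson families, which requires constructing nontrivial near-capacity-achieving priors and cannot be obtained from the naive $\Capa\le\Diam$ estimate; the remaining pieces are direct computations from the closed-form divergence formulas.
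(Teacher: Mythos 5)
Your overall skeleton is the paper's: compute $\AffH$ in closed form, bound $\Capa$, and feed the results into the three bounds of \Cref{thm:main}. The closed-form values of $\AffH$ are correct, the Bernoulli capacity bound via the Bhatia--Davis inequality is valid (and in fact slightly sharper than the paper's $\Capa\le 1-2\varepsilon$), and the small-$\mu$ Gaussian case via $\Capa\le\Diam=O(\mu^2)$ is fine. The problem is the large-parameter regimes, which you yourself identify as the main obstacle: there your method for controlling $\Capa$ is backwards. You propose to ``exhibit an explicit near-optimal prior'' (uniform on $[-\mu,\mu]$, uniform on $\Theta$, a $\sqrt{M}$-net for Poisson) and compute its $\chi^2$ mutual information. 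But $\Capa=\sup_{\rho}I_{\chi^2}(P;X)$ is a supremum over \emph{all} priors, so evaluating one prior yields only a \emph{lower} bound on $\Capa$, whereas \Cref{thm:main} needs an upper bound. Nor can this be repaired by a redundancy--capacity duality of the form $\Capa\le\inf_Q\sup_P\chi^2(P\|Q)$: because the paper defines $I_{\chi^2}$ with the mixture $\bE_{P'\sim\rho}[P']$ as reference, and the mixture is \emph{not} the minimizer of $Q\mapsto\bE_\rho[\chi^2(P\|Q)]$ (the minimizer is proportional to $\sqrt{\bE_\rho[(\rmd P)^2]}$), one only gets $\Capa\ge\inf_Q\sup_P\chi^2(P\|Q)$, the wrong direction.

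The missing ingredient is the paper's \Cref{lemma:chi2-capacity}, a subadditivity bound $\sfC_{\chi^2}(\cup_{i=1}^m\calP_i)\le\sum_i\sfC_{\chi^2}(\calP_i)+m-1$, proved by splitting an arbitrary prior into its restrictions to the pieces and lower-bounding the mixture by each weighted piece. This converts your (correct) covering intuition into a valid capacity upper bound: unit intervals give $\Capa=O(\mu)$ for the Gaussian family, singletons give $\Capa\le|\Theta|-1$ for finite $\Theta$, and the intervals $[(i-1)^2,i^2]$ (which have $O(1)$ $\chi^2$ diameter for Poisson) give $\Capa=O(\sqrt M)$. Two smaller gaps: for Poisson with $M\le1$ the shortcut $\Capa\le\Diam$ is unavailable since $\Diam=\infty$, and a ``local quadratic expansion'' is not a proof --- the paper instead computes $I_{\chi^2}(\lambda;X)\le 1-e^{-M}$ directly using monotonicity of $\bP(\Poi(\lambda)=k)$ in $\lambda\le 1\le k$; relatedly, for \eqref{eq:upper_bound_1} to give $O(\Capa^2)$ you need $\Capa$ bounded strictly below $1$ (e.g.\ $1-e^{-M}\le 1-e^{-1}$), not merely $\Capa\le cM$ with an unspecified $c$.
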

The proof of \Cref{cor:specific_families} follows from \Cref{thm:main} and the computations of $(\Capa,\AffH)$, which we defer to \Cref{append:cor_family}. For the example presented at the beginning of the introduction, a weaker upper bound $\exp(O(\mu^2))$ of \eqref{eq:balanced_example} follows from point 2 of \Cref{cor:specific_families}; we refer to \Cref{subsec:warmup} for the proof of the stronger form \eqref{eq:balanced_example} (via the inequality $\Capa\le 1-\exp(-\mu^2)$ in \eqref{eq:ub_f_mu}). We will also discuss the tightness of these upper bounds in \Cref{subsec:tightness}; typically, the quadratic dependence in \eqref{eq:upper_bound_1} is tight when $\Capa$ is small, and both the base and exponent of \eqref{eq:upper_bound_2} are not improvable in general. However, for specific $\calP$ there could still be gaps, and we refer the discussions to \Cref{subsec:tightness}. 

\Cref{thm:main} admits several generalizations to related settings. 

\ifthenelse{\boolean{arxiv}}{\paragraph{A ``noisy'' finite de Finetti theorem.}}{\vspace{0.2cm} \noindent \textbf{A ``noisy'' finite de Finetti theorem.}} 
The same proof technique of \Cref{thm:main} also yields a version of the de Finetti theorem for finite exchangeable sequences with added noise.
Given a permutation mixture $\bP_n$ and its i.i.d.\ counterpart $\bQ_n$, we denote by $\bP_{k,n}$ and $\bQ_{k,n}$ the marginal distribution of the first $k$ coordinates:
\begin{align*}
	\bP_{k, n} & = \Law(X_1, \dots, X_k) \quad \quad \text{for $(X_1, \dots, X_n) \sim \bP_n$} \\
	\bQ_{k, n} & = \Law(X_1, \dots, X_k) \quad \quad \text{for $(X_1, \dots, X_n) \sim \bQ_n$}\,.
\end{align*}
Of course, $\bQ_{k ,n}$ is still a product distribution.
The following theorem establishes bounds on the statistical distance between $\bP_{k, n}$ and this i.i.d. counterpart.
\begin{thm}\label{thm:deFinetti}
	Let $1\le k\le n$.
	Then
	\begin{align*}
		\chi^2\pth{\bP_{k, n} \| \bQ_{k, n}} \le \frac{k^2}{n^2}\cdot \begin{cases}
			20\Capa^2 &\text{if } \Capa \le \frac{n}{2k}, \\
			(e\AffH)^{\Capa} - 1 &\text{if } \Capa, \AffH < \infty.
		\end{cases}
	\end{align*}
\end{thm}
By a standard convexity argument, the same bound holds when comparing the marginal distribution of an arbitrary finite exchangeable sequence (which can be written as a \textit{mixture} of permutation mixtures) to a mixture of product distributions.
%
%


In the absence of upper bounds on $\Capa$ and $\AffH$, some well-known results due to Stam \cite{stam1978distance}, Diaconis--Freedman \cite{diaconis1980finite}, and several recent works \cite{gavalakis2021information,johnson2024relative} yield bounds of the form $\KL(\bP_{k, n} \| \bQ_{k, n}) = O(\frac{k^2}{n})$, and $O(\frac{k^2}{n(n+1-k)})$ when $|\calP| = O(1)$.
In particular, $\bP_{k, n}$ and $\bQ_{k, n}$ are, in general, statistically indistinguishable when $k = o(\sqrt n)$, and this requirement can be improved to $k = o(n)$ when $\calP$ is small.\footnote{Note that if no restriction is made on the size of $\calP$ or on $\AffH$, then the birthday paradox shows that the requirement $k = o(\sqrt n)$ is tight.}
\Cref{thm:deFinetti} generalizes these results from two directions. First, if $|\calP|=O(1)$, we have $\Capa\le |\calP|-1$ (corollary of \Cref{lemma:chi2-capacity}) even when the elements of $\calP$ are mutually singular. Therefore, our first upper bound gives $\chi^2(\bP_{k, n} \| \bQ_{k, n}) = O(\frac{k^2}{n^2})$ for $k\le cn$, generalizing Stam's results \cite{stam1978distance} to the $\chi^2$ divergence, which cannot be analyzed using his techniques. Second, the same $O(\frac{k^2}{n^2})$ scaling also holds when $\AffH < \infty$.
This corresponds to the setting in which the distributions in $\calP$ are ``noisy'' enough that the pairwise squared Hellinger distances are bounded away from $2$. In this case, the mixtures are statistically indistinguishable for any $k = o(n)$, without any restriction on $|\calP|$.
This observation suggests that the addition of noise in the form of a bound on $\AffH$ has a similar qualitative effect as a bound on the size of $\calP$. 

\ifthenelse{\boolean{arxiv}}{\paragraph{Neighboring permutation mixtures.}}{\vspace{0.2cm} \noindent \textbf{Neighboring permutation mixtures.}}
The next result concerns the statistical distance between two ``neighboring'' permutation mixtures. Given $P_1,\cdots,P_n\in \calP$, let $P'_1\in \calP$ be arbitrary and set $P'_i = P_i$ for all $i \geq 2$.
The permutation mixtures $\bP_n$ and $\bP'_n$ are then defined as
\begin{align*}
(X_1,\cdots,X_n) &\sim \bE_{\pi\sim \Unif(S_n)}\qth{ \otimes_{i=1}^n P_{\pi(i)}} \text{ under } \mathbb{P}_n; \\
(X_1,\cdots,X_n) &\sim \bE_{\pi\sim \Unif(S_n)}\qth{ \otimes_{i=1}^n P'_{\pi(i)}} \text{ under } \mathbb{P}'_n. 
\end{align*}
In other words, the components of $\bP_n$ and $\bP'_n$ only differ in one coordinate. The following theorem establishes an upper bound on the squared total variation (TV) distance. 

\begin{thm}\label{thm:two_mixtures}
If $\Diam<\infty$, then
\begin{align*}
\TV(\bP_n, \bP'_n)^2 \le \frac{1}{4}\int \frac{(\rmd \bP_n - \rmd \bP'_n)^2}{\rmd \overline{P}^{\otimes n} } \le \frac{3\Diam (e\AffH)^{3\Capa}}{n},
\end{align*}
where $\overline{P} := \frac{1}{n-1}\sum_{i=2}^n P_i$. In particular, for every test function $f\in L^2(\overline{P}^{\otimes n})$, it holds that
\begin{align*}
\pth{\bE_{\bP_n}[f] - \bE_{\bP'_n}[f]}^2 \le \bE_{\overline{P}^{\otimes n}}\qth{f^2}\cdot \frac{12\Diam (e\AffH)^{3\Capa}}{n}. 
\end{align*}
\end{thm}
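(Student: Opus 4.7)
The first inequality is the standard Cauchy--Schwarz bound relating $\TV$ to a $\chi^2$-type quantity: for the probability reference $\nu = \overline{P}^{\otimes n}$,
\[
\TV(\bP_n,\bQ_n)^2 = \pth{\tfrac{1}{2}\int \Big|\tfrac{\rmd\bP_n-\rmd\bQ_n}{\rmd\nu}\Big|\,\rmd\nu}^2 \;\le\; \tfrac{1}{4}\int \tfrac{(\rmd\bP_n-\rmd\bQ_n)^2}{\rmd\nu}.
\]
So it suffices to bound the quadratic form $\int(\rmd\bP_n-\rmd\bQ_n)^2/\rmd\overline{P}^{\otimes n}$ by $12\Diam(e\AffH)^{3\Capa}/n$. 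Since $P_i = Q_i$ for $i\ge 2$, telescoping the permutation sums yields the identity
\[
\rmd\bP_n - \rmd\bQ_n \;=\; \frac{1}{n}\sum_{k=1}^n (P_1-Q_1)(\rmd x_k)\,\bP_{n-1}^{(k)}(\rmd x_{-k}),
\]
where $\bP_{n-1}^{(k)}$ is the permutation mixture of $(P_2,\dots,P_n)$ placed on the coordinates $[n]\setminus\{k\}$. Localizing the discrepancy to a single coordinate is the source of the desired $1/n$ factor.

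Expanding the squared $L^2(\overline{P}^{\otimes n})$-norm produces a double sum $\tfrac{1}{n^2}\sum_{k,\ell}A_{k\ell}$. The $n$ diagonal terms factor as $\int(P_1-Q_1)^2/\overline{P}$ times $1+\chi^2(\bP_{n-1}\|\overline{P}^{\otimes(n-1)})$. The first factor is bounded by $4\Diam$ using the pointwise inequality $(P_1-Q_1)^2 \le 2(P_1-\overline{P})^2 + 2(Q_1-\overline{P})^2$, convexity of $\chi^2$ in the second argument, and $\chi^2(P\|\overline{P})\le \Diam$ for $P\in\calP$. The second factor is at most $(e\AffH)^{\Capa}$ by \Cref{thm:main} applied to the $n-1$ distributions $P_2,\dots,P_n$, whose average is exactly $\overline{P}$. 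The diagonal contribution is therefore already of the claimed order, namely $O(\Diam(e\AffH)^{\Capa}/n)$.

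The main obstacle is the off-diagonal sum $\tfrac{1}{n^2}\sum_{k\ne\ell}A_{k\ell}$: a naive Cauchy--Schwarz in $L^2(\overline{P}^{\otimes n})$ would forfeit the $1/n$ factor, so one must exploit the cancellation $\int(P_1-Q_1)=0$. My plan is to average over the uniform permutations underlying $\bP_{n-1}^{(k)}$ and $\bP_{n-1}^{(\ell)}$ and write the common value of $A_{k\ell}$ (independent of the pair $k\ne\ell$ by exchangeability) as the bordered permanent
\[
A_{k\ell} \;=\; \frac{1}{(n-1)^2(n-2)!}\,\mathrm{perm}\begin{pmatrix} 0 & u^\top \\ u & M \end{pmatrix},
\]
with $M_{ij} := \int P_iP_j/\overline{P}$ (so that $M/(n-1)$ is doubly stochastic and positive semidefinite, the very structure exploited in the paper) and $u_j := \int(P_1-Q_1)P_j/\overline{P}$, which encodes the cancellation via $\sum_j u_j = 0$. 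The hard step is to control this bordered permanent; I anticipate this will follow from iterating the van~der~Waerden / PSD-permanent inequalities that underlie \Cref{thm:main}, together with two Cauchy--Schwarz applications that contract the bordering vector $u$ against $M$. This doubling of PSD-permanent estimates is what I expect produces the exponent $3\Capa$ in place of the diagonal's $\Capa$, while the $\Diam$ prefactor tracks $\|u\|^2$.

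Summing the diagonal and off-diagonal contributions and applying the initial Cauchy--Schwarz step then delivers the stated bound on $\TV(\bP_n,\bQ_n)^2$. The final inequality for test functions is immediate from one more Cauchy--Schwarz in $L^2(\overline{P}^{\otimes n})$: writing $\bE_{\bP_n}[f]-\bE_{\bQ_n}[f] = \int f\cdot(\rmd\bP_n-\rmd\bQ_n)/\rmd\overline{P}^{\otimes n}\,\rmd\overline{P}^{\otimes n}$ and applying Cauchy--Schwarz yields $(\bE_{\bP_n}[f]-\bE_{\bQ_n}[f])^2 \le \bE_{\overline{P}^{\otimes n}}[f^2]\cdot \int(\rmd\bP_n-\rmd\bQ_n)^2/\rmd\overline{P}^{\otimes n}$, and substituting the just-proved bound on the quadratic form concludes the proof.
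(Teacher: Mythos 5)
Your setup is sound and matches the paper's: the opening Cauchy--Schwarz step, the localization identity $\rmd\bP_n-\rmd\bQ_n=\frac1n\sum_k(P_1-Q_1)(\rmd x_k)\,\bP_{n-1}^{(k)}(\rmd x_{-k})$, the $4\Diam\cdot(e\AffH)^{\Capa}/n$ bound on the diagonal via \Cref{thm:main} applied to $P_2,\dots,P_n$, and the concluding Cauchy--Schwarz for test functions are all correct. The final step and the identification of the off-diagonal sum as the place where $\int(P_1-Q_1)=0$ must be exploited are also right.

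However, the central difficulty is exactly the off-diagonal bound, and there your argument stops at a conjecture: you ``anticipate'' that the bordered permanent can be controlled by ``iterating the van der Waerden / PSD-permanent inequalities that underlie \Cref{thm:main}, together with two Cauchy--Schwarz applications.'' This is a genuine gap, not a routine verification. Each off-diagonal term $A_{k\ell}$ must itself be shown to be $O_{\calP}(1/n)$, and no amount of Cauchy--Schwarz against the PSD block $M$ obviously produces a factor of $1/n$ from the single linear constraint $\sum_j u_j=0$; the entire-sum permanent bound (\Cref{lemma:UB-entire-sum}) that drives \Cref{thm:main} cannot be ``iterated'' to handle a bordered permanent with a sign-changing border. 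The paper's actual mechanism is different and substantially harder: after the localization identity it runs the doubly centered orthogonal expansion on the remaining coordinates, reducing the quadratic form to the \emph{weighted} sum $\frac{\Diam}{n}\sum_{\ell}\ell\,S_{\ell-1}$ (\Cref{lemma:UB-empirical-bayes}), and the growing weights force a degree-by-degree bound $S_\ell\le 3\sqrt{\ell+1}\sum\lambda_2^{\ell_2}\cdots\lambda_n^{\ell_n}$ (\Cref{lemma:UB-individual-sum}). That bound in turn requires the \emph{complex} Maclaurin-type inequality \eqref{eq:complex_case} of \Cref{thm:ESP} combined with the complex-Gaussian permanent representation of $S_\ell$ (\Cref{lemma:wick-formula-2}) --- an ingredient the paper flags as essential and which your proposal does not identify. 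Relatedly, your guess for the origin of the exponent $3\Capa$ is off: it does not come from doubling PSD-permanent estimates, but from the weight $\ell\cdot\sqrt{\ell+1}\approx\ell^{3/2}$ turning each geometric series $\sum_\ell \lambda_i^\ell$ into $(1-\lambda_i)^{-3}$. Without a concrete bound on the bordered permanent (or an equivalent of \Cref{lemma:UB-individual-sum}), the proof is incomplete at its key step.
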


\Cref{thm:two_mixtures} shows that the squared TV distance between $\bP_n$ and $\bP'_n$ is of the order $O_\calP(\frac{1}{n})$. In addition, the mean difference of any test function $f$ under $\bP_n$ and $\bP'_n$ is of order $O_\calP(n^{-1/2})$ as long as the second moment of $f$ under the i.i.d.\ distribution $\overline P^{\otimes n}$ is bounded. This result will find applications in compound decision problems and differential privacy in \Cref{sec:statistics}.

\subsection{Notation}
Throughout the paper all logarithms are in base $e$. For a positive integer $n$, let $[n]:=\{1,\ldots,n\}$, and $S_n$ be the permutation group over $[n]$. For a vector $x$ in $\bR^n$ or $\bC^n$, let $x^\top$ and $x^\sfH$ be its real and conjugate transposes, respectively, and $e_\ell(x) := \sum_{S\subseteq [n]: |S|=\ell} \prod_{i\in S} x_i$ be the elementary symmetric polynomial of degree $\ell$ in $x$. For a square matrix $A = (a_{ij})_{i,j\in [n]}\in \mathbb{R}^{n\times n}$, let $\trace(A) = \sum_{i=1}^n a_{ii}$ be its trace, and $\Perm(A) = \sum_{\pi \in S_n} \prod_{i=1}^n a_{i\pi(i)}$ be its permanent. Let $\calN(\mu,\sigma^2)$ denote the normal distribution with mean $\mu$ and variance $\sigma^2$, and $\calCN(0,1)$ denote the complex normal distribution with real and imaginary parts being independent $\calN(0,\frac{1}{2})$ random variables. For probability measures $\mu$ and $\nu$, let $L^2(\mu)$ be the space of all functions $f$ with $\bE_\mu\qth{f^2} < \infty$, $\mu \otimes \nu$ be the product measure, and $\mu\star \nu$ be the convolution defined as $\mu\star \nu(A) = \int \mu(A-x)\rmd \nu(x)$. For probability measures $P$ and $Q$ on the same probability space, let 
\begin{align*}
	\TV(P,Q) = \frac{1}{2}\int |\rmd P - \rmd Q|, \qquad H^2(P,Q) = \int \pth{\sqrt{\rmd P} - \sqrt{\rmd Q}}^2 \end{align*}
be the total variation (TV) and squared Hellinger distances, respectively, and 
\begin{align*}
	\KL(P\|Q) = \int \rmd P\log \frac{\rmd P}{\rmd Q}, \qquad \chi^2(P\|Q) = \int \frac{(\rmd P - \rmd Q)^2}{\rmd Q}
\end{align*}
be the Kullback--Leibler (KL) and $\chi^2$ divergences, respectively. A collection of inequalities between the above distances/divergences can be found in \cite[Chapter 7.6]{polyanskiy2024information}. 

We shall use the following standard asymptotic notations. For non-negative sequences $\{a_n\}$ and $\{b_n\}$, let $a_n = O(b_n)$ denote $\limsup_{n\to\infty} a_n/b_n < \infty$, and $a_n = o(b_n)$ denote $\limsup_{n\to\infty} a_n/b_n = 0$. In addition, we write $a_n = \Omega(b_n)$ for $b_n = O(a_n)$, $a_n = \omega(b_n)$ for $b_n = o(a_n)$, and $a_n = \Theta(b_n)$ for both $a_n = O(b_n)$ and $b_n = O(a_n)$. We will also use the notations $O_\theta, o_\theta$, etc. to denote that the hidden factor depends on some external parameter $\theta$. 

\subsection{Proof techniques}\label{sec:techniques}
The core of our approach is to express the $\chi^2$ divergence between $\bP_n$ and $\bQ_n$ in terms of matrix permanents. Let us denote the marginal law of the coordinates of $X$ under $\bQ_n$ by $\overline{P}$.
We may then write
\begin{equation}\label{eq:lr_is_perm}
 \frac{\rmd \bP_n}{\rmd \bQ_n}(x_1, \dots, x_n) = \bE_{\pi \sim \Unif(S_n)} \qth{\prod_{i=1}^{n} \frac{\rmd P_{\pi(i)}}{\rmd \overline P}(x_i)} = \frac{1}{n!} \Perm(M(x)),
\end{equation}
where $M: \bR^n \to \bR^{n \times n}$ is given by
\begin{equation*}
	M(x)_{ij} = \frac{\rmd P_{j}}{\rmd \overline P}(x_i).
\end{equation*}

Obtaining bounds on matrix permanents is a classical subject~\cite[Chapter 6]{minc1984permanents}.
This task is made challenging by the fact that no polynomial time algorithms to compute or approximate matrix permanents are known, which rules out the existence of general purpose bounds on $\Perm(M(x))$.
Our strategy is to exploit enough special structure of the matrix $M(x)$ to allow us to compute accurate upper bounds.

We present two approaches to analyzing~\eqref{eq:lr_is_perm}.
The first, simpler, approach involves expanding $\Perm(M(x))$ around the $n \times n$ all-ones matrix $J$.
This expansion, which we call the ``doubly centered'' expansion, has the appealing property that the summands are orthogonal in $L^2(\bQ_n)$ \emph{and} involve matrices whose rows sum to zero.
We then apply a key inequality, Lemma~\ref{lemma:hadamard}, which shows that permanents of matrices with centered rows are small.
This inequality is obtained via a new bound on elementary symmetric polynomials: for any vector $x \in \bR^n$, \emph{if $\sum_{i=1}^n x_i = 0$}, then
\begin{align}\label{eq:ineq_ESP}
	|e_\ell(x)| \leq C \sqrt{\binom{n}{\ell}} \left(\frac{1}{n}\sum_{i=1}^n x_i^2\right)^{\ell/2}.
\end{align}
This bound may be contrasted with Maclaurin's inequality~\cite{maclaurin1730iv}, a version of which implies the bound  $|e_\ell(x)| \leq \binom{n}{\ell} \left(\frac{1}{n}\sum_{i=1}^n x_i^2\right)^{\ell/2}$ for \emph{all} $x \in \bR^n$, which is easily seen to be tight when $x_1 = \dots = x_n = 1$.
The crucial difference between this classical bound and our new inequality is the square-root improvement in the leading coefficient from $\binom{n}{\ell}$ to $\sqrt{\binom{n}{\ell}}$, which may be viewed as a consequence of the cancellations induced by the condition  $\sum_{i=1}^n x_i = 0$.
Our new Maclaurin-type inequality, combined with the doubly centered expansion, gives rise to the first bound in Theorem~\ref{thm:main}.

Our second approach combines the above observations with additional spectral information.
Since $(X_1, \dots, X_n)$ are independent under $\bQ_n$, we have that for any $\pi, \pi' \in S_n$,
\begin{align*}
	\bE_{X \sim \bQ_n} \qth{\prod_{i=1}^n M_{i, \pi(i)}(X_i)\prod_{i=1}^n M_{i, \pi'(i)}(X_i)} & = \prod_{i=1}^n \bE_{X_i \sim \overline P} \qth{M_{i, \pi(i)}(X_i)M_{i, \pi'(i)}(X_i)} \\
	& = \prod_{i=1}^n \int \frac{\rmd P_{\pi(i)} \rmd P_{\pi'(i)}}{\rmd \overline P} =: n^n \prod_{i=1}^n A_{\pi(i), \pi'(i)},
\end{align*}
where we write $A \in \bR^{n \times n}$ for the matrix given by $A_{ij} = \frac 1n \int \frac{\rmd P_i \rmd P_j}{\rmd \overline P}$.
We obtain
\begin{align*}
	\chi^2(\bP_n \| \bQ_n) + 1 & =  \frac{1}{(n!)^2} \bE_{X \sim \bQ_n} \Perm(M(X))^2 = \frac{n^n}{n!} \Perm(A)\,.
\end{align*}

The matrix $A$ is doubly stochastic, therefore putting the question of bounding $\Perm(A)$ in the setting of the celebrated van der Waerden conjecture \cite{van1926aufgabe}, proved by Egorychev \cite{egorychev1981solution} and Falikman \cite{falikman1981proof}, which states that $\Perm(A)\ge \frac{n!}{n^n}$ for all $n\times n$ doubly stochastic matrices $A$.
We prove a new \emph{upper} bound on the permanent of such matrices: if $A$ is a doubly stochastic matrix with eigenvalues $1 = \lambda_1 > \lambda_2 \geq \dots \geq \lambda_n \geq 0$, then
\begin{equation*}
	\Perm(A) \leq  \frac{n!}{n^n} \prod_{i=2}^n \frac{1}{1 - \lambda_i}.
\end{equation*}
The second and third bounds in Theorem~\ref{thm:main} are obtained by relating the spectrum of $A$ to the information theoretic quantities in Definition~\ref{defn:capacity_diam}.

Both applications in \Cref{thm:deFinetti} and \ref{thm:two_mixtures} rely on a combination of the above two approaches, by first expanding $M(x)$ around the all-ones matrix $J$ and then relating each term in the expansion to a counterpart appearing in the expression of $\Perm(A)$, through the identities in \Cref{sec:identity}. In particular, a \emph{complex} version of the inequality \eqref{eq:ineq_ESP} for $x\in \bC^n$ turns out to be crucial in the proof of \Cref{thm:two_mixtures}, a technique which could be of independent interest. 

\subsection{Related work}\label{subsec:related_work}
Comparisons between $\bP_n$ and $\bQ_n$ first implicitly arose in the consideration of compound decision problems.
An early result in this direction is due to Hannan~\cite{hannan1953asymptotic} (see~\cite{hannan1955asymptotic}).
In our notation, his results reads as follows: suppose that $\calP = \{P_-, P_+\}$ consists of two elements, which are not orthogonal to each other.
Given natural numbers $n$ and $k$ with $k \leq n$, denote by $\bP^{(k)}_n$ the permutation mixture arising from taking $P_1 = \dots = P_k = P_+$ and $P_{k+1} = \dots = P_n = P_-$.
If $k / n$ is bounded away from $0$ and $1$ as $n, k \to \infty$, then
\begin{equation*}
	\TV(\bP_n^{(k)}, \bP_n^{(k+1)}) = o(1)\,.
\end{equation*}
In \cite{hannan1955asymptotic}, this result is used to establish asymptotic equivalence between the performance of permutation invariant and simple decision rules.
Subsequent empirical Bayes literature largely focused on simple decision rules and avoided explicit consideration of $\bP_n$, but the importance of comparing $\bP_n$ to $\bQ_n$ to obtain genuine oracle inequalities was emphasized by Greenshtein and Ritov~\cite{greenshtein2009asymptotic,Greenshtein_2019}, who obtained risk bounds for the two models under the squared loss.
The general power of permutation-invariant decision rules was recently investigated in~\cite{weinstein2021permutation}.

The fact that exchangeable distributions are ``essentially'' (mixtures of) product measures is an important heuristic which has its roots in the work of de Finetti~\cite{de1929funzione} and Hewitt--Savage~\cite{hewitt1955symmetric}.
Diaconis and Friedman observed that versions of this claim hold in a number of different models~\cite{diaconis1987dozen}, and precise quantitative versions have been established for finite exchangeable sequences under different assumptions~\cite{stam1978distance,diaconis1980finite,bobkov2005generalized,roos2015bobkov,gavalakis2021information,johnson2024relative}.
Recently, motivated by applications in conformal prediction (see, e.g.,~\cite{tibshirani2019conformal, barber2023conformal}), similar results have been obtained for ``weighted'' exchangeable sequences as well~\cite{barber2023finetti}.

Bounding the $\chi^2$ divergence for mixture distributions is a common task in high-dimensional statistics~\cite{lepski1999estimation,Bar02,cai2011testing,ingster2012nonparametric,wu2016minimax,ColComTsy17,banks2018information,jiao2018minimax,perry2018optimality,balakrishnan2019hypothesis,bandeira2020optimal}, and obtaining bounds for ``low-degree'' versions of the $\chi^2$ divergence has become an important technique in establishing statistical-computational gaps (see, e.g., \cite{kunisky2019notes}).
In most of these applications, $\bP_n$ is a mixture distribution but $\bQ_n$ is particularly simple (for instance, its coordinates are i.i.d.\ $\calN(0,1)$ or $\Bern(1/2)$); in this case, expanding the likelihood ratio $\frac{\rmd \bP_n}{\rmd \bQ_n}$ in a basis of $L^2(\bQ_n)$-orthogonal polynomials yields explicit bounds in terms of the moments of $\bP_n$.
Few general techniques exist to obtain sharp results outside of this simple setting, though Schramm and Wein~\cite{schramm2022computational} (see also~\cite{rush2023easier}) showed that a related expansion based on cumulants yields a (potentially loose) bound when $\bQ_n$ is a mixture distribution.
In a more general context, Kunisky~\cite{kunisky2024low} has shown that employing the Hoeffding decomposition~\cite{Hoe48} can sometimes obtain comparable results for more complicated choices of $\bQ_n$.
We employ a similar technique in \Cref{sec:basis}.

Bounds for permutation mixtures were first obtained by Ding~\cite{Din22} in a PhD thesis supervised by the second author.
He established contiguity in the setting of the example presented in~\eqref{eq:balanced_example}, by showing
\begin{equation*}
	\chi^2(\bP_n \| \bQ_n) = \begin{cases}
		O(\mu^{2}) &\text{if } \mu \leq 1, \\
		O(\exp(4 \mu^2)) &\text{if } \mu > 1.
	\end{cases}
\end{equation*}
He also developed analogous bounds for a Bernoulli version of the problem. A dimension-independent upper bound on the KL divergence $\KL(\bP_n \| \bQ_n)$ was also observed and obtained in \cite[Theorem 2]{tang2023capacity} for a finite class $\calP$, used as a crucial technical step towards establishing the capacity upper bound of the noisy permutation channel in \cite{makur2020coding}. Their proof relies on an anti-concentration property for the histogram of independent discrete observations with non-identical distributions, leading to an upper bound linear in $|\calP|$. They also prove a bound analogous to the one presented in \Cref{thm:deFinetti} for $k$-dimensional marginals with $k<n$ via a convexity argument~\cite[Appendix A]{tang2023capacity}; however, their bound scales as 
$O(\frac{k}{n})$ rather than $O(\frac{k^2}{n^2})$. 

\subsection{Organization}
The rest of this paper is organized as follows. 
In \Cref{sec:statistics}, we present several statistical applications of our main theorems, highlighting their usefulness in the analysis of statistical procedures.
We then turn to our main results.
In \Cref{sec:failure}, we review some existing methods of bounding the $\chi^2$ divergence between mixture distributions, and show their failure to obtain the tight bound \eqref{eq:balanced_example} for the toy Gaussian model at the beginning of the introduction. \Cref{sec:basis} and \ref{sec:permanents} provide the details of the doubly centered expansion and the matrix permanent approach, respectively, where a key inequality in the analysis is a new upper bound on the elementary symmetric polynomial for centered real or complex vectors in \Cref{subsec:key-inequality}. 
We defer broader discussion of our results (\Cref{sec:discussion}), 
further comments on our proof techniques (\Cref{sec:identity}), proofs of the new inequality (\Cref{append:saddlepoint}) and other main results (\Cref{append:proofs}) to the appendix. 
\section{Statistical applications}\label{sec:statistics}
In this section, we present several statistical applications of our main results.
These include the identification of the least-favorable prior for the $\ell^p$ constrained Gaussian sequence model, an improved differential privacy guarantee for the ``shuffled privacy model,'' and black-box consistency results for empirical Bayes procedures in compound decision problems.
In all three cases, our results yield more general results, with fewer assumptions, than were previously available.

\subsection{The least-favorable prior over $\ell^p$ balls}\label{subsec:GSM}
Our results give new insights into the classical Gaussian sequence model.
Let the observation vector be $X\sim \calN(\theta, I_n)$, with an unknown mean vector $\theta\in \bR^n$ lying in the following $\ell^p$ ball $\Theta_p(R)$: 
\begin{align*}
	\Theta_p(R) := \sth{ \theta\in \bR^n: \frac{1}{n}\sum_{i=1}^n |\theta_i|^p \le R^p }. 
\end{align*}
Here $p>0$ is a given norm parameter, and $R = R_n$ is a given radius; we will assume that $R = o(1)$ to promote sparsity in this problem. Our target is to understand the minimax $\ell^q$ risk $r_{p,q}^\star(n,R)$ for the mean estimation: 
\begin{align*}
	r_{p,q}^\star(n,R) := \inf_{\widehat{\theta}}\sup_{\theta\in \Theta_p(R)} \bE_\theta \qth{\frac{1}{n}\sum_{i=1}^n |\widehat{\theta}_i - \theta_i|^q}. 
\end{align*}
Sharp bounds on this minimax risk were first established by Dohono and Johnstone~\cite{donoho1994minimax}, who showed that if $q\ge p\vee 1$, a soft-thresholding estimator with threshold $\mu = \sqrt{2\log(1/R^p)}$ is asymptotically minimax, with risk $r_{p,q}^\star(n,R) = (1+o(1))\mu^{q-p} R^p$, \textit{provided that} the density condition $s:=\frac{nR^p}{\mu^p} = \omega(1)$ holds.
This condition was later removed in \cite{zhang2012minimax}, who showed by a different argument that the same result holds in the sparse case, when $s = O(1)$.

The most challenging aspect of these results, highlighted in~\cite{donoho1994minimax}, is to obtain a lower bound on the minimax risk $r_{p,q}^\star(n,R)$ by bounding the Bayes risk under (a sequence of) carefully chosen priors.
Identifying these ``least-favorable priors'' gives fundamental insight into the statistical structure of the problem, since they capture the core difficulty of the estimation task.
Due to the permutation invariance of the parameter space $\Theta_p(R)$, it is natural to conjecture that a permutation prior is the asymptotically least favorable, under which the marginal distribution of the observations is a permutation mixture.
However, the technical difficulties of analyzing such mixtures meant that~\cite{donoho1994minimax} used the i.i.d.\ prior $(1-\alpha)\delta_0 + \frac{\alpha}{2}(\delta_{-\mu} + \delta_{\mu})$ instead, with $\alpha = (1+o(1))\frac{s}{n}$.\footnote{We note for comparison with what follows that their argument applied to the two-point prior $(1-\alpha)\delta_0 + \alpha\delta_\mu$ gives the same lower bound.}
Of course, this prior is not supported on $\Theta_p(R)$; however, the crucial density condition $s = \omega(1)$ is used to guarantee that it approximately concentrates on $\Theta_p(R)$ as $n \to \infty$.
Nevertheless, their argument does not reveal whether a permutation prior is indeed least favorable.

To analyze the sparse case, Zhang~\cite{zhang2012minimax} successfully analyzed the permutation prior in the $s = O(1)$ regime.
He established that the prior given by $\theta=(v_{\pi(1)},\dots,v_{\pi(n)})$ with $\pi\sim \Unif(S_n)$, where $v=(\mu,\dots,\mu,0,\dots,0)$ has $s$ nonzero entries, is indeed asymptotically least favorable.
The major benefit of this analysis is that it explicitly identifies a \textit{bona fide} prior on $\Theta_p(R)$.
However, Zhang's key step~\cite[Prop. 1]{zhang2012minimax} fails unless $s = O(1)$.
What limits the scope of his results is the lack of tools for analyzing the permutation mixture.

To summarize, these results leave open the question of what drives the statistical difficulty of the problem. Is there a fundamental difference between the two regimes $s = \omega(1)$ and $s = O(1)$, or does the same least-favorable prior saturate the minimax risk in both cases?
Our results show that the latter is the case.

\begin{lemma}\label{lemma:gaussian-seq-model}
	Let $q\ge p\vee 1$ and $R = o(1)$. For $\mu = \sqrt{2\log(1/R^p)}$ and $s=\frac{nR^p}{\mu^p}=\omega(1)$, the Bayes $\ell^q$ risk under the permutation prior $\theta=(v_{\pi(1)},\dots,v_{\pi(n)})$ with $\pi\sim \Unif(S_n)$ and
	\begin{align*}
		v = (\sqrt{1-\varepsilon}\mu,\dots,\sqrt{1-\varepsilon}\mu,0,\dots,0), \quad \|v\|_0 = \lfloor s\rfloor
	\end{align*}
	is lower bounded by $(1-o(1))(1-c(\varepsilon))\mu^{q-p}R^p$, with $c(\varepsilon)\to 0$ as $\varepsilon\to 0^+$. 
\end{lemma}
When combined with the analysis in~\cite{zhang2012minimax} for the $s= O(1)$ regime, \Cref{lemma:gaussian-seq-model} shows upon taking $\varepsilon \to 0^+$ that permutation priors are asymptotically least favorable for the minimax risk $r_{p,q}^\star(n,R)$ for any choice of $s$.

	\subsection{Amplification by shuffling for the Gaussian mechanism}
	\Cref{thm:two_mixtures} also finds applications to differential privacy \cite{dwork2006calibrating}, particularly in the context of the \emph{shuffled privacy model} \cite{erlingsson2019amplification}. In this model, each client sends her (randomized) message to a secure shuffler, which randomly permutes all incoming messages before forwarding them to the server.
	This stylized setting captures the behavior of a system in which users' submissions are stripped of identifying metadata before aggregation, so that an adversary does not know which user is associated with which data point.
	It is known that random shuffling can strengthen privacy guarantees, a phenomenon that~\cite{erlingsson2019amplification} calls ``amplification by shuffling.''
	Our techniques give a very simple proof of this fact, including for the Gaussian mechanism, for which no such guarantees were previously known.
	
	To this end, consider the standard differential privacy requirement with two neighboring datasets, where the private data of client~1 differs between them. Let $P_1,\dots,P_n$ denote the output distributions of the clients under the first dataset. Under the second dataset, only client~1's distribution changes, resulting in $(P_1', P_2,\dots,P_n)$. Finally, let $\bP_n$ and $\bP_n'$ denote the distributions of the \emph{shuffled} outputs under these datasets. We have the following result. 
	
	\begin{lemma}\label{lemma:privacy}
		Suppose $P_i = x_i + Z$ and $P_1' = x_1' + Z$ for some noise mechanism $Z$, with bounded private data $x_1,\dots,x_n,x_1'\in [0,1]$. For $\varepsilon\in (0,1)$, both the Laplace mechanism $Z\sim \mathrm{Lap}(\frac{1}{\varepsilon})$ and the Gaussian mechanism $Z\sim \calN(0,\frac{1}{\varepsilon^2})$ achieve
		\begin{align*}
			\TV(\bP_n, \bP_n') = O\pth{\frac{\varepsilon}{\sqrt{n}}}. 
		\end{align*}
	\end{lemma}
	
	Without random shuffling, it is well known that both mechanisms only achieve a TV distance $\Theta(\varepsilon)$ between the output distributions. In other words, random shuffling amplifies privacy. This phenomenon has been studied in \cite{erlingsson2019amplification,girgis2021renyi,feldman2022hiding}, where the results require each client's local mechanism to satisfy $\varepsilon$-LDP (local differential privacy). Our requirement in \Cref{lemma:privacy} recovers this condition for the Laplace mechanism (which is $\varepsilon$-LDP), but our result also applies to the Gaussian mechanism (which does not satisfy $\varepsilon$-LDP for any $\varepsilon>0$), which gives a novel demonstration of amplification by shuffling in this setting.
\subsection{Consistency of empirical Bayes procedures for the compound decision problem}\label{subsec:EB-quadratic}
	Our results can also be applied to obtain general results for the compound decision problem~\cite{robbins1951asymptotically}.
	In this problem, the statistician observes independent data $X_1, \dots, X_n$, where $X_i \sim P_{\theta_i}$ for $i=1, \dots, n$; the goal is to estimate  the vector of parameters under a separable loss $L (\theta, \widehat \theta) := \frac 1 n \sum_{i=1}^n \ell(\theta_i, \widehat \theta_i)$.
	This setting is the starting point of the theory of empirical Bayes~\cite{Efr19}, which views the parameters as having been drawn i.i.d.\ from a prior distribution, which is in turn estimated from the observations.

	An oracle version of the compound decision problem was suggested by Brown and Greenshtein~\cite{brown2009nonparametric} (see also~\cite{Jiang_2009}), in which the statistician knows the unordered set $\{\theta_1, \dots, \theta_n\}$, but not their correspondence to the observations $(X_1, \dots, X_n)$.
	Equivalently, the observations may be viewed as arising from the permutation mixture $\bP_n$.
	As Greenshtein and Ritov~\cite{greenshtein2009asymptotic} observe, however, most oracle inequalities in the empirical Bayes literature compare to an oracle that knows $\{\theta_1, \dots, \theta_n\}$ but is restricted to the use of ``simple'' decision rules, of the form $\widehat \theta = (\Delta(X_1), \dots, \Delta(X_n))$ for some fixed univariate function $\Delta$.
	Such rules are in fact optimal when the observations arise not from $\bP_n$ but from its independent counterpart $\bQ_n$.
	Denoting by $\calD^{\mathrm{S}}$ the set of such simple rules, the ``Fundamental Theorem of Compound Decisions'' (see, e.g.,~\cite{copas1969compound}) states that for any $\widehat \theta \in \calD^{\mathrm{S}}$,
	\begin{equation}\label{eq:fundamental_theorem}
		\bE[L(\theta, \widehat \theta)] =  \bE_{\vartheta \sim \frac{1}{n}\sum_{i=1}^n \delta_{\theta_i}} \ell(\vartheta, \Delta(X))\,,
	\end{equation}
	where the final expression denotes the risk in a univariate Bayesian estimation problem under which $\vartheta$ is a random element of $\{\theta_1, \dots, \theta_n\}$ and, conditional on $\vartheta$, $X \sim P_{\vartheta}$.
	Denoting the optimal risk for simple rules by $r^{\mathrm{S}}(\theta) = 	\inf_{\widehat \theta \in \calD^{\mathrm{S}}} \bE[L(\theta, \widehat \theta)]$, the empirical Bayes method yields estimators $\widehat \theta^{\mathrm{EB}}$ \textit{depending on the whole observation} (i.e., is not simple) whose risk satisfies
	\begin{equation}\label{eq:oracle_inequality}
		\bE[L(\theta, \widehat \theta^{\mathrm{EB}})] = r^{\mathrm{S}}(\theta) + o(1)\,.
	\end{equation}
	For example, Robbins~\cite{robbins1951asymptotically} showed that in the Gaussian location model with $\theta_i \in \{\pm 1\}$ for $i = 1, \dots, n$ and the zero one loss, an empirical Bayes estimator has excess risk $O(n^{-1/2})$ over $r^{\mathrm{S}}(\theta)$.  
	(See~\cite{zhang2003compound} for a summary of related results.)
	In light of~\eqref{eq:fundamental_theorem}, such procedures asymptotically match the performance of an oracle that knows the parameters $\theta$ but is restricted to the use of simple rules.

	As emphasized by~\cite{greenshtein2009asymptotic}, however, such guarantees are only partially convincing.
    Indeed, two important questions remain.
	In the first place, since $\widehat \theta^{\mathrm{EB}}$ is not simple, comparison with the oracle risk $r^{\mathrm{S}}(\theta)$ is not justified.
	However, since empirical Bayes procedures satisfy the permutation invariance property
	\begin{align*}
		\widehat{\theta}^{\mathrm{EB}}_{\pi(i)}(X_{\pi(1)},\dots,X_{\pi(n)}) = \widehat{\theta}^{\mathrm{EB}}_{i}(X_{1},\dots,X_n), \quad i\in [n], 
	\end{align*}
	for all $\pi \in S_n$, it is more honest to compare against the class $\calD^{\mathrm{PI}}$ of permutation invariant estimators.
    This raises the question of characterizing the difference in performance between the best simple decision rule (which is optimal for the i.i.d.\ setting) and the best permutation-invariant decision rule (which is optimal for the compound decision setting).
	Moreover, once the restriction to simple estimators is dropped, there is no clear analogue to~\eqref{eq:fundamental_theorem}; indeed, in the general setting two natural multivariate Bayesian problems arise (see, e.g.~\cite{weinstein2021permutation}), the first under which the random parameter $(\vartheta_1, \dots, \vartheta_n)$ is a uniform random permutation of $(\theta_1, \dots, \theta_n)$, and the second under which its coordinates are i.i.d.\ draws from $\frac{1}{n}\sum_{i=1}^n \delta_{\theta_i}$.
	It is the latter problem that appears in~\eqref{eq:fundamental_theorem}, but the relationship between the risks of estimators in the two different Bayesian settings is not clear.
    This raises the question of whether estimators that achieve a good risk bound in one setting also achieve a good risk bound in the other.
	
	Our results allow us to address both concerns.
	First, \Cref{cor:EB} shows that the optimal risks of simple and permutation invariant decision rules agree up to $O(n^{-1/2})$ for general models and bounded losses.
	Denote the optimal risk among permutation invariant decision rules by $\calD^{\mathrm{PI}}$:
	\begin{equation*}
		r^{\mathrm{PI}}(\theta) = \inf_{\widehat{\theta}\in \calD^{\mathrm{PI}}} \bE\qth{L(\theta, \widehat{\theta})}\,.
	\end{equation*}
	
	\Cref{thm:two_mixtures} implies the following. 
	\begin{lemma}\label{cor:EB}
		Let $P_{\theta_1},\cdots,P_{\theta_n}\in \calP$ with $\Diam < \infty$, and $0\le \ell(\cdot,\cdot) \le M$. Then
		\begin{align*}
			r^{\mathrm{S}}(\theta) - r^{\mathrm{PI}}(\theta) \le M\sqrt{6\Diam (e\AffH)^{3\Capa}/n}. 
		\end{align*}
	\end{lemma}
	
	\Cref{cor:EB} shows that for a large class of models and losses, $r^\mathrm{S}(\theta) - r^{\mathrm{PI}}(\theta) = O_{\calP}(n^{-1/2})$.
	This strengthens a ``folklore'' result in~\cite{hannan1955asymptotic} that shows that the gap is $o(1)$.
	Combined with~\eqref{eq:oracle_inequality}, these results yield honest oracle inequalities for empirical Bayes procedures.
	Our improvement is salient for the majority of parametric empirical Bayes problems (like that of Robbins) for which the excess risk over simple decision rules is $O(n^{-1/2})$.
	
	The connection between \Cref{cor:EB} and \Cref{thm:two_mixtures} is the Bayesian perspective alluded to above.
	Both $r^{\mathrm{S}}(\theta)$ and $r^{\mathrm{PI}}(\theta)$ can be analyzed within the Bayesian framework under which $\vartheta$ is a uniform random permutation of $\theta$.
	In this formulation, comparing $r^{\mathrm{S}}(\theta)$ with $r^{\mathrm{PI}}(\theta)$ reduces to comparing the conditional distributions $P_{\vartheta_i \mid X_i}$ and $P_{\vartheta_i \mid X^n}$, where \Cref{thm:two_mixtures} plays a central role.
	
	Second, \cref{thm:permutation_prior} gives a general comparison between the two different Bayesian settings described above for arbitrary estimators.
	In particular, it implies that for general bounded losses, estimators that have vanishing risk under the i.i.d.\ prior also have vanishing risk under the more complicated permutation prior.
		\begin{lemma}\label{thm:permutation_prior}
	For any estimator $\widehat{\vartheta} = \widehat{\vartheta}(X)$ and separable loss $L(\vartheta,\widehat{\vartheta}) = \frac 1n \sum_{i=1}^n \ell(\vartheta_i, \widehat{\vartheta}_i)$, we have
			\begin{align}\label{eq:mutual_info_1}
				|\bE_\bP[L(\vartheta,\widehat{\vartheta})] - \bE_\bQ[L(\vartheta,\widehat{\vartheta})]| \le \sqrt{ e(\chi^2(\calP)+1)} \cdot \frac 1n \sum_{i=1}^n \sqrt{ \var_{\bQ}[\ell(\vartheta_i,\widehat{\vartheta}_i)]}, 
			\end{align}
			where $\chi^2(\calP)$ is any upper bound in \Cref{thm:main} for the distribution family $\calP=\{P_{\theta_1},\dots,P_{\theta_n}\}$, and $\bP$ and $\bQ$ denote the joint distributions of $(\vartheta, X)$ under the permutation prior and i.i.d. prior, respectively. 
	
	Moreover, if $0 \leq \ell(\cdot, \cdot) \leq M$, then
	\begin{equation}\label{eq:consistency}
		\bE_\bP[L(\vartheta,\widehat{\vartheta})] \leq \bE_\bQ[L(\vartheta,\widehat{\vartheta})] + \sqrt{ eM (\chi^2(\calP)+1) \bE_\bQ[L(\vartheta,\widehat{\vartheta})]}\,.
	\end{equation}
	In particular, if $M (\chi^2(\calP)+1)$ is bounded, then $\bE_\bQ[L(\vartheta,\widehat{\vartheta})] \to 0$ implies $\bE_\bP[L(\vartheta,\widehat{\vartheta})] \to 0$.
	\end{lemma}
	
	We note that further improvements to \Cref{cor:EB} are possible for specific losses, via more specialized arguments.
	For example, an alternative strategy is developed in \cite{greenshtein2009asymptotic} for the quadratic loss $L(\theta,\widehat{\theta}) = \frac{1}{n}\|\theta - \widehat{\theta}\|_2^2$. For several classes $\calP$ (such as the Gaussian location model), \cite{greenshtein2009asymptotic} shows that $r^{\mathrm{S}}(\theta) - r^{\mathrm{PI}}(\theta) = O_{\calP}(n^{-1})$ under quadratic loss, improving upon the $O_{\calP}(n^{-1/2})$ upper bound in \Cref{cor:EB}, which holds for general losses. 
	An adaptation of their proof technique can be used to extend this result to broader classes $\calP$ under weaker assumptions. 
	
	\begin{lemma}\label{lemma:EB-quadratic}
		Let $P_{\theta_1},\dots,P_{\theta_n}\in \calP$ with $\Diam < \infty$, and $|\theta_i|\le M$ for all $i\in [n]$. Then under the quadratic loss $L(\theta,\widehat{\theta}) = \frac{1}{n}\|\theta - \widehat{\theta}\|_2^2$, 
		\begin{align*}
			r^{\mathrm{S}}(\theta) - r^{\mathrm{PI}}(\theta)  \le \frac{4M^2\Diam (1+\Capa)}{n}. 
		\end{align*}
	\end{lemma}
	
	We remark that the proof of \Cref{lemma:EB-quadratic} follows a different strategy than the other arguments in this paper.
	The key step in the proof of \Cref{lemma:EB-quadratic} is to apply convexity with a clever coupling of \cite{greenshtein2009asymptotic} to obtain a Hellinger upper bound between two permutation mixtures, as stated in \Cref{lemma:Greenshtein-Ritov}; however, as witnessed in \Cref{sec:failure}, this convexity-based idea is insufficient to establish \eqref{eq:balanced_example}.
	Moreover, even for the quadratic loss, \Cref{lemma:EB-quadratic} is still not optimal: in our follow-up work~\cite{han2025best}, we use the techniques of the present paper to obtain the first major improvement of \Cref{lemma:EB-quadratic} for Gaussian location models since \cite{greenshtein2009asymptotic}; see \cite[Theorem 4.1]{han2025best}.

\section{Failure of existing approaches}\label{sec:failure}
Before we present our proof of \Cref{thm:main}, we review several existing approaches to upper bound the statistical distance between mixture distributions. Unfortunately, we will show that all of them fail to yield the bound \eqref{eq:balanced_example}. For these approaches, occasionally we will show the failure under other statistical distances such as the squared TV distance or the KL divergence, both of which are no larger than the $\chi^2$ divergence. 

\subsection{Reduction to two simple distributions}
The simplest strategy to deal with two mixture distributions is a reduction to ``simple'' distributions via coupling and convexity. Taking the KL divergence for an example and returning to the setting of \eqref{eq:balanced_example}, it holds that
\begin{align*}
\KL(\bP_n \| \bQ_n) &= \KL(\bE_{\vartheta\sim \nu_\bP}[ \calN(\vartheta,I_n)] \| \bE_{\vartheta'\sim \nu_\bQ}[ \calN(\vartheta',I_n)]) \\
&\le \min_{\rho \in \Pi(\nu_\bP, \nu_\bQ)} \bE_{(\vartheta,\vartheta')\sim \rho} \qth{\KL(\calN(\vartheta,I_n) \| \calN(\vartheta',I_n))} = \frac{W_2^2(\nu_\bP,\nu_\bQ)}{2}, 
\end{align*}
where the inequality is due to the joint convexity of the KL divergence, $\Pi(\nu_\bP, \nu_\bQ)$ denotes all possible couplings with marginals $\nu_\bP$ and $\nu_\bQ$, and $W_2$ is the Wasserstein-$2$ distance. For the last term, since $\nu_\bP$ is supported on the set $L := \{\theta\in \{\pm\mu\}^n: \sum_{i=1}^n \theta_i=0\}$, we can lower bound it as
\begin{align*}
    W_2^2(\nu_\bP, \nu_\bQ) \ge \bE_{\vartheta\sim\nu_\bQ} \qth{ \min_{\theta_0\in L} \|\vartheta-\theta_0\|_2^2 } = \bE_{\vartheta\sim\nu_\bQ} \bqth{ 4\mu^2 \Big|\sum_{i=1}^n \mathbbm{1}_{\vartheta_i=\mu} - \frac{n}{2}\Big|  } = \Omega(\sqrt{n}\mu^2), 
\end{align*}
where $\sum_{i=1}^n \mathbbm{1}_{\vartheta_i=\mu}$ follows a binomial distribution $\mathsf{B}(n,\frac{1}{2})$ under $\nu_\bQ$, and the last step is due to the CLT. Consequently, this coupling can at best provide an upper bound of $O(\sqrt{n}\mu^2)$, which not only grows with $n$ but also exhibits a loose dependence on $\mu$ (i.e. $O(\mu^2)$ instead of $O(\mu^4)$ for small $\mu$). 

\subsection{Reduction to one simple distribution}\label{subsec:mixture-simple}
Instead of reducing to the $\chi^2$ divergence between two simple distributions, a more careful coupling approach could reduce to the $\chi^2$ divergence between a mixture distribution and a simple distribution, where the standard second moment computation \cite{ingster2012nonparametric} could then be applied. Still taking the KL divergence for an example, such a general strategy takes the form
\begin{align*}
\KL(\bP_n \| \bQ_n) &\le \min_{ \{\nu_{\theta'}\}_{\theta'\in \{\pm \mu\}^n } }\bE_{\vartheta'\sim \nu_\bQ}\qth{ \KL\pth{ \bE_{\vartheta\sim \nu_{\vartheta'}}\qth{\calN(\vartheta,I_n)} \| \calN(\vartheta',I_n) } },
\end{align*}
where the minimization is over all possible families of distributions $\{\nu_{\theta'}\}_{\theta'\in \{\pm \mu\}^n }$ such that $\bE_{\vartheta'\sim \nu_\bQ}[\nu_{\vartheta'}] = \nu_{\bP}$. 
This is the strategy developed in~\cite{Din22}, where it is employed with a judicious choice of the family $\{\nu_{\theta'}\}_{\theta'\in \{\pm \mu\}^n }$ to show that $\bP_n$ is contiguous to $\bQ_n$.

However, even though it succeeds in proving a dimension-free upper bound of $\KL(\bP_n \| \bQ_n)$, this strategy cannot yield the correct dependence on $\mu$.
Indeed, we have the following lower bound:
\begin{align*}
&\bE_{\vartheta'\sim \nu_\bQ}\qth{ \KL\pth{ \bE_{\vartheta\sim \nu_{\vartheta'}}\qth{\calN(\vartheta,I_n)} \| \calN(\vartheta',I_n) } } \\
&\stepa{\ge} \frac{1}{2}\bE_{\vartheta'\sim \nu_\bQ}\qth{ W_2^2\pth{ \bE_{\vartheta\sim \nu_{\vartheta'}}\qth{\calN(\vartheta,I_n)}, \calN(\vartheta',I_n) } } \\
&\stepb{\ge} \frac{1}{2}\bE_{\vartheta'\sim \nu_\bQ}\qth{ \left\| \bE_{\vartheta\sim \nu_{\vartheta'}}[\vartheta] - \vartheta' \right\|_2^2} \stepc{\ge} \frac{1}{2}\bE_{\vartheta'\sim \nu_\bQ}\bqth{ \frac{1}{n}\bpth{\sum_{i=1}^n \vartheta_i'}^2 } = \frac{\mu^2}{2}, 
\end{align*}
where (a) is the transportation cost inequality under the Gaussian measure $\calN(\theta',I_n)$ \cite[Theorem 6.6]{ledoux2001concentration}, (b) uses the inequality $W_2^2(P,Q)\ge \|\bE_P[X] - \bE_Q[X]\|_2^2$ due to convexity, and (c) notes that the vector $\bE_{\vartheta\sim \nu_{\vartheta'}}[\vartheta]$ always lies on the hyperplane $\{\theta\in \bR^n: \sum_{i=1}^n \theta_i = 0\}$ and uses the orthogonal projection onto this hyperplane. Therefore, this approach can at best lead to an upper bound $O(\mu^2)$, still larger than the correct dependence $O(\mu^4)$ for small $\mu$. On a high level, this is because the original mixtures $\bP_n$ and $\bQ_n$ have the same mean, but after applying the convexity, the means of $\bE_{\theta\sim \nu_{\theta'}}\qth{\calN(\theta,I_n)}$ and $\calN(\theta',I_n)$ no longer match. 

\subsection{Method of moments}
The previous failures illustrate the importance of directly comparing the mixtures $\bP_n$ and $\bQ_n$ rather than reducing one or both measures to simple distributions.
The method of moments is a powerful approach for performing such a direct comparison.
Expanding the likelihood ratio in the orthogonal basis of Hermite polynomials shows that (see, e.g. \cite[Lemma 2.2]{hardt2015tight} and \cite[Lemma 9]{wu2020optimal})
\begin{align}\label{eq:method_of_moments}
\TV(\nu_\bP \star \calN(0,I_n), \nu_\bQ \star \calN(0,I_n))^2 \le \sum_{\alpha\ge 0} \frac{(m_\alpha(\nu_\bP)-m_\alpha(\nu_\bQ))^2}{\alpha!}, 
\end{align}
where $\alpha=(\alpha_1,\cdots,\alpha_n)\in \naturals^n$ is a multi-index, $\alpha! \triangleq \prod_{i=1}^n (\alpha_i!)$, and $m_\alpha(\nu) = \bE_{X\sim \nu}\qth{\prod_{i=1}^n X_i^{\alpha_i}}$ denotes the $\alpha$-th joint moment of $X\sim \nu$. Since the moment difference in \eqref{eq:method_of_moments} vanishes when $|\alpha| = \sum_{i=1}^n \alpha_i = 1$, the method of moments indeed suggests an $O(\mu^4)$ dependence on $\mu$. 

However, the bound in \eqref{eq:method_of_moments} is not dimension free and tends to infinity as $n\to\infty$. To see this, fix some $\ell\in \naturals$ and consider the multi-index $\alpha=(1,1,2,\cdots,2,0,\cdots,0)$ with $2$ appearing $\ell$ times. By simple algebra, it is easy to check that 
\begin{align*}
m_\alpha(\nu_\bP) = \mu^{2\ell} \bE_{\vartheta \sim \nu_\bP}[\vartheta_{1}\vartheta_{2}] = -\frac{\mu^{2\ell+2}}{n-1}, \qquad m_\alpha(\nu_\bQ) = 0. 
\end{align*}
Therefore, the total contribution of all permutations of $\alpha$ to the RHS of \eqref{eq:method_of_moments} is
\begin{align*}
\frac{\mu^{4\ell +4}}{2^{\ell}(n-1)^2}\cdot \binom{n}{2,\ell,n-\ell-2} = \Theta_{\ell}\pth{\mu^{4\ell+4} n^{\ell}}, 
\end{align*}
which grows polynomially with $n$ for any constant $\ell\ge 1$. On a high level, this shows that although the method of moments works well in one dimension, the sum of squared moment differences might become unbounded in high dimensions due to a large number of cross terms. 

\subsection{Method of cumulants}
To address a similar problem arising in the recovery problem under a low-degree framework, a recent work \cite{schramm2022computational} established an upper bound on the $\chi^2$ divergence based on cumulants. Specifically, it was shown in \cite[Theorem 2.2]{schramm2022computational} and \cite[Proposition 2.1]{rush2023easier} that
\begin{align}\label{eq:cumulants}
\chi^2(\nu_\bP \star \calN(0,I_n) \| \nu_\bQ \star \calN(0,I_n))\le \sum_{\alpha\ge 0} \frac{\kappa_\alpha^2}{\alpha!}, 
\end{align}
where $\kappa_\alpha$ denotes the \emph{joint cumulant}
\begin{align*}
\kappa_\alpha = \kappa_{\vartheta\sim \nu_\bQ}\pth{\frac{\rmd\nu_\bP}{\rmd\nu_{\bQ}}(\vartheta), \vartheta_1, \ldots, \vartheta_1, \vartheta_2, \ldots, \vartheta_2, \ldots, \vartheta_n}, 
\end{align*}
where $\vartheta_i$ appears $\alpha_i$ times in the joint cumulant. We refer to \cite{schramm2022computational} for an in-depth discussion of cumulants; we shall only need the following recursive formula to compute $\kappa_\alpha$: 
\begin{align}\label{eq:cumulants_recursive}
\kappa_\alpha = \bE_{\vartheta \sim \nu_\bP}\qth{\prod_{i=1}^n \vartheta_{i}^{\alpha_i}} - \sum_{0\le \beta \lneq \alpha} \kappa_\beta \prod_{i=1}^n \qth{ \binom{\alpha_i}{\beta_i} \bE_{\vartheta_i\sim \Unif(\{\pm \mu\})} \qth{\vartheta_i^{\alpha_i - \beta_i}} }. 
\end{align}

Although the upper bound \eqref{eq:cumulants} enjoys several advantages over the moment-based approach such as a better behavior for product distributions, it still fails in our problem in a delicate way. Consider a multi-index $\alpha=(1,\ell,0,\ldots,0)$ and let $a_\ell = \kappa_{(1,\ell,0,\ldots,0)}$. Based on \eqref{eq:cumulants_recursive} and some algebra, it is easy to show $a_{2\ell}=0$ for all $\ell\in \naturals$, and that the modified sequence $b_{2\ell+1} = (-1)^{\ell+1}(n-1)a_{2\ell+1}/\mu^{2\ell+2}$ satisfies
\begin{align*}
b_{2\ell+1} - \binom{2\ell+1}{2}b_{2\ell-1} + \binom{2\ell+1}{4}b_{2\ell-3} - \cdots = (-1)^{\ell}, \qquad b_1 = 1. 
\end{align*}
The sequence $\{b_n\}$ is well known to be the number of alternating permutations,\footnote{Entry A000182 in The On-Line Encyclopedia of Integer Sequences, \url{https://oeis.org/A000182}} with the asymptotic growth \cite[Page 5]{flajolet2009analytic}
\begin{align*}
b_{2\ell+1} \sim 2\pth{\frac{2}{\pi}}^{2\ell+1}\cdot (2\ell+1)! \qquad \text{as } \ell\to\infty.
\end{align*}
We obtain that
\begin{align*}
\kappa_{(1,2\ell+1,0,\ldots,0)} = a_{2\ell+1} \sim (-1)^{\ell+1}\frac{\pi}{n-1}\left(\frac{2\mu}{\pi}\right)^{2\ell+2}\cdot (2\ell+1)! \qquad \text{as } \ell\to\infty.
\end{align*}
As the growth of $(2\ell+1)!$ is much faster than exponential, summing along this subsequence in \eqref{eq:cumulants} gives a diverging result, indicating the failure of this approach. 
\section{Upper bound via the doubly centered expansion}\label{sec:basis}
In this section we prove the first part \eqref{eq:upper_bound_1} of \Cref{thm:main}, by developing an orthogonal expansion of the likelihood ratio in terms of a set of ``doubly centered'' functions. In \Cref{subsec:warmup} we first provide a direct proof of the previous toy example and discuss the intuitions, and then present the proof for the general case in \Cref{subsec:general-case}. One key step is a new upper bound of elementary symmetric polynomials for a sequence summing into zero, which we present in \Cref{subsec:key-inequality}. 

\subsection{A warm-up example}\label{subsec:warmup}
Similar to \Cref{sec:failure}, this section focuses on the toy Gaussian model in \eqref{eq:balanced_example} as a warm-up example. The proof relies critically on the following representation of the Gaussian likelihood ratio: for $\theta\in \{\pm \mu\}$, 
\begin{align}\label{eq:Gaussian-LR}
\frac{\varphi(x-\theta)}{\varphi(x)} = \exp\pth{\theta x - \frac{\theta^2}{2}} = \cosh(\mu x)\exp\pth{-\frac{\mu^2}{2}} + \sinh(\mu x)\frac{\theta}{\mu}\exp\pth{-\frac{\mu^2}{2}},
\end{align}
where $\varphi(x)$ is the density function of $\calN(0,1)$. In addition, the marginal distribution under $\bQ_n$ is
\begin{align}\label{eq:varphi_0}
\varphi_0(x) := \frac{\varphi(x-\mu) + \varphi(x+\mu)}{2} = \varphi(x) \cosh(\mu x)\exp\pth{-\frac{\mu^2}{2}}, 
\end{align}
corresponding to the first term on the RHS of \eqref{eq:Gaussian-LR}. Based on \eqref{eq:Gaussian-LR} and \eqref{eq:varphi_0}, we can express the likelihood ratio between $\bP_n$ and $\bQ_n$ as
\begin{align}\label{eq:overall-LR}
\frac{\rmd \bP_n}{\rmd \bQ_n}(X) &= \bE_{\pi\sim\Unif(S_n)}\qth{\prod_{i=1}^n\frac{\varphi(X_i - \theta_{\pi(i)})}{\varphi_0(X_i)}} \nonumber \\
&\overset{\prettyref{eq:Gaussian-LR}}{=} \bE_{\pi\sim\Unif(S_n)}\qth{\prod_{i=1}^n\pth{1 + \tanh(\mu X_i)\frac{\theta_{\pi(i)}}{\mu}}} \nonumber \\
&= \sum_{S\subseteq [n]} \pth{\prod_{i\in S} \tanh(\mu X_i)} \bE_{\pi\sim\Unif(S_n)}\qth{\prod_{i\in S} \frac{\theta_{\pi(i)}}{\mu}}. 
\end{align}
Consequently, the likelihood ratio in \eqref{eq:overall-LR} is decomposed into a sum over all subsets $S\subseteq [n]$, and the effects of $X$ and $\pi$ are decoupled in each summand. Some observations are in order: 
\begin{enumerate}
    \item The functions $\{\prod_{i\in S}\tanh(\mu X_i): S\subseteq [n]\}$ are orthogonal under $\bQ_n$. To see this, let $S\neq T$ be two different subsets of $[n]$, so that there exists some $i_0\in S\Delta T$. As
    \begin{align*}
    \bE_{\bQ_n}\qth{\tanh(\mu X_{i_0})} = \int_{\bR} \varphi_0(x)\tanh(\mu x)\rmd x \overset{\prettyref{eq:varphi_0}}{=} \exp\pth{-\frac{\mu^2}{2}}\bE_{Z\sim \calN(0,1)} \qth{\sinh(\mu Z)} = 0, 
    \end{align*}
    we have by the product structure of $\bQ_n$ that
    \begin{align*}
    &\bE_{\bQ_n}\qth{ \prod_{i\in S} \tanh(\mu X_i) \prod_{i \in T}\tanh(\mu X_i) } \\
    &= \bE_{\bQ_n}\qth{\tanh(\mu X_{i_0})} \cdot \bE_{\bQ_n}\qth{ \prod_{i\in S\backslash \{i_0\}} \tanh(\mu X_i) \prod_{i \in T\backslash \{i_0\}}\tanh(\mu X_i) } = 0. 
    \end{align*}
    \item The second moment of $\prod_{i\in S} \tanh(\mu X_i)$ under $\bQ_n$ admits a simple expression. In fact, by the i.i.d. structure of $\bQ_n$, we have
    \begin{align*}
    \bE_{\bQ_n}\qth{ \prod_{i\in S} \tanh^2(\mu X_i) } = \left(\exp\pth{-\frac{\mu^2}{2}} \int_{\bR} \frac{\sinh^2(\mu x)}{\cosh(\mu x)}\varphi(x)\rmd x \right)^{|S|} =: f(\mu)^{|S|}. 
    \end{align*}
    We can also derive an upper bound of $f(\mu)$: 
    \begin{align}\label{eq:ub_f_mu}
        f(\mu) &= \exp\pth{-\frac{\mu^2}{2}} \int_{\bR} \pth{\cosh(\mu x) - \frac{1}{\cosh(\mu x)}}\varphi(x)\rmd x  \nonumber\\
        &= 1 - \exp\pth{-\frac{\mu^2}{2}} \int_{\bR} \frac{\varphi(x)}{\cosh(\mu x)}\rmd x \nonumber \\
        &\le 1 - \exp\pth{-\frac{\mu^2}{2}}  \frac{1}{\int_{\bR}\cosh(\mu x)\varphi(x) \rmd x } = 1 - \exp\pth{-\mu^2}. 
    \end{align}
    \item The expectation $\bE_{\pi\sim\Unif(S_n)}\qth{\prod_{i\in S} \frac{\theta_{\pi(i)}}{\mu}}$ depends only on $|S|$, and can be explicitly computed in the toy example. In fact, the generating function $\prod_{i=1}^n \pth{1+z\theta_i/\mu} = (1-z^2)^{n/2}$ gives
    \begin{align}\label{eq:balanced_expectation}
    \bE_{\pi\sim\Unif(S_n)}\qth{\prod_{i\in S} \frac{\theta_{\pi(i)}}{\mu}} = (-1)^{\ell/2} \frac{\binom{n/2}{\ell/2}}{\binom{n}{\ell}}\indc{\ell\text{ is even}} =: g_\ell, 
    \end{align}
    where $\ell = |S|$.
    
    In particular, since the mean vector is balanced, $g_1 = \bE_{\pi\sim\Unif(S_n)} \qth{\frac{\theta_{\pi(1)}}{\mu}} = 0$, and we have the simple inequality: 
    \begin{align*}
        g_{\ell}^2 \le \frac{1}{\binom{n}{\ell}}. 
    \end{align*}
\end{enumerate}

Based on the above observations, we are in a position to compute the second moment of \eqref{eq:overall-LR}: 
\begin{align*}
\chi^2(\bP_n \| \bQ_n) &= \bE_{\bQ_n}\qth{\pth{\frac{\rmd \bP_n}{\rmd \bQ_n}}^2} - 1 = \sum_{S\subseteq [n]} f(\mu)^{|S|}g_{|S|}^2 - 1 \stepa{=} \sum_{S\subseteq [n]: |S|\ge 2} f(\mu)^{|S|}g_{|S|}^2 \\
&\le \sum_{\ell=2}^n \binom{n}{\ell}\cdot \frac{f(\mu)^\ell}{\binom{n}{\ell}} \le \frac{f(\mu)^2}{1-f(\mu)} = \begin{cases}
O(\mu^4) &\text{if } \mu \le 1, \\
O(\exp(\mu^2)) &\text{if } \mu > 1. 
\end{cases}
\end{align*}
where in (a), the contribution of $S = \varnothing$ cancels with $-1$, and $g_1 = 0$ is used for $|S|=1$. This proves \eqref{eq:balanced_example}. 

We make some comments on the intuition behind this approach and the challenges moving forward.
The methods of moments and cumulants discussed in Section~\ref{sec:failure} rely on expanding the ratio $\varphi(x- \theta)/\varphi(x)$ in the basis of Hermite polynomials, which are orthogonal under $L^2(\varphi)$; this expansion facilitates taking expectations with respect to the standard Gaussian measure---as is done implicitly in deriving~\eqref{eq:method_of_moments} and~\eqref{eq:cumulants}---but computing the $\chi^2$ divergence between $\bP_n$ and $\bQ_n$ by comparing both measures to the standard Gaussian is inherently loose.

By contrast, the above approach works directly with the ratio $\varphi(x - \theta)/\varphi_0(x) = 1 + \Psi(x, \theta)$, where $\Psi(x, \theta) := \tanh(\mu x) \frac{\theta}{\mu}$.
This representation has two benefits with respect to an expansion in Hermite polynomials.
First, the product structure of $\bQ_n$ guarantees that for any $\theta_i \in \{\pm \mu\}$, the functions $\{\Pi_{i \in S} \Psi(X_i, \theta_i) : S \subseteq [n]\}$ are automatically orthogonal in $L^2(\bQ_n)$, so no change of measure is required.
Second, and more subtly, for any $x \in \bR^d$, the function $\theta \mapsto \Psi(x, \theta)$ is automatically mean-zero when $\theta \sim \mathrm{Unif}(\{\pm \mu\})$.
This property guarantees that the first-order term in~\eqref{eq:overall-LR} vanishes, which is crucial to obtaining the correct dependence on $\mu$, and indirectly ensures that the coefficients $g_\ell$ are typically small.
On the other hand, as \Cref{sec:failure} makes clear, large values of the moments $m_\alpha$ or cumulants $\kappa_\alpha$ lead to the failure of the moment and cumulant methods. Further comparisons between the above two approaches are discussed in \Cref{subsec:basis_expansion}. 

Several challenges arise when attempting to implement this idea in general.
There will typically not be a simple expression for the analogue of $\Psi(x, \theta)$ in the general case, and the coefficients no longer admit an explicit formula such as~\eqref{eq:balanced_expectation}.
We address these challenges in \Cref{subsec:general-case}.

\subsection{Proof of the general case}\label{subsec:general-case}
This section is devoted to the proof of the first part \eqref{eq:upper_bound_1} of \Cref{thm:main}. Motivated by \eqref{eq:Gaussian-LR} and \eqref{eq:varphi_0} in the warm-up example, we write
\begin{align}\label{eq:marginal}
\overline{P}(\rmd x) &= \frac{1}{n}\sum_{i=1}^n P_i(\rmd x), \\
P_i(\rmd x) &= \overline{P}(\rmd x) + \Psi_{i}(\rmd x). \label{eq:Psi}
\end{align}
It is clear that $\Psi_i \ll \overline{P}$, so the derivative $\frac{\rmd \Psi_i}{\rmd \overline{P}}$ exists. In addition, $\{\Psi_1,\cdots,\Psi_n\}$ have the following crucial ``doubly centered'' property:
\begin{align}
	    &\int \Psi_i(\rmd x) = \int (P_i(\rmd x) - \overline{P}(\rmd x)) = 0, \qquad \text{for all }i;\label{eq:int_zero} \\
    &\sum_{i=1}^n \Psi_i(\rmd x) = \sum_{i=1}^n (P_i(\rmd x) - \overline{P}(\rmd x)) = 0, \qquad \text{for }\overline{P}\text{-a.e. } x. \label{eq:sum_zero}
\end{align}
As we shall see, the first of these equations guarantees that products of functions in $\{\Psi_i\}_{i \in [n]}$ are orthogonal in $L^2(\bQ_n)$, whereas the second guarantees that $\bE_{I \sim  \mathrm{Unif}[n]} \Psi_I(\rmd x) = 0$.
Based on \eqref{eq:marginal} and \eqref{eq:Psi}, we express the likelihood ratio as 
\begin{align*}
\frac{\rmd \bP_n}{\rmd \bQ_n}(X) &= \bE_{\pi\sim\Unif(S_n)}\qth{\prod_{i=1}^n \frac{\rmd P_{\pi(i)}}{\rmd \overline{P}}(X_i)} = \bE_{\pi\sim\Unif(S_n)}\qth{\prod_{i=1}^n \pth{1+\frac{\rmd \Psi_{\pi(i)}}{\rmd \overline{P}}(X_i)}} \\
&= \sum_{S\subseteq [n]} \bE_{\pi\sim\Unif(S_n)}\qth{\prod_{i\in S} \frac{\rmd \Psi_{\pi(i)}}{\rmd \overline{P}}(X_i)}. 
\end{align*}
Similar to the warm-up example, the condition \eqref{eq:int_zero} ensures that the above summands are orthogonal under $\bQ_n = \overline{P}^{\otimes n}$. Therefore,
\begin{align}\label{eq:chi-squared-basis}
\chi^2(\bP_n \| \bQ_n) = \bE_{\bQ_n}\qth{\pth{\frac{\rmd \bP_n}{\rmd \bQ_n}}^2} - 1 = \sum_{S\subseteq [n], S\neq \varnothing} \bE_{\bQ_n}\qth{\pth{\bE_{\pi\sim\Unif(S_n)}\qth{\prod_{i\in S} \frac{\rmd \Psi_{\pi(i)}}{\rmd \overline{P}}(X_i)}}^2}.
\end{align}

To proceed, we fix any set $S$ with $|S|=\ell$; by symmetry we assume that $S = [\ell]$. For a fixed vector $(X_1,\cdots,X_\ell)$, construct a matrix $A = (a_{ij})\in \bR^{\ell\times n}$, with $a_{ij} = \frac{\rmd \Psi_j}{\rmd \overline{P}}(X_i)$. It is clear that 
\begin{align*}
    \bE_{\pi\sim\Unif(S_n)}\qth{\prod_{i\in S} \frac{\rmd \Psi_{\pi(i)}}{\rmd \overline{P}}(X_i)} = \frac{1}{
    \ell! \binom{n}{\ell}}\sum_{T\subseteq [n], |T| = \ell} \Perm(A_T), 
\end{align*}
where $A_T\in \mathbb{R}^{\ell\times \ell}$ is the submatrix of $A$ by taking the columns of $A$ with indices in $T$. In addition, thanks to \eqref{eq:sum_zero}, the matrix $A$ has all row sums zero. The key to upper bounding \eqref{eq:chi-squared-basis} is the following technical lemma. 

\begin{lemma}\label{lemma:hadamard}
Let $A=(a_{ij})\in \bR^{\ell\times n}$ be a real matrix with $1\le \ell \le n$ and all row sums being zero. Then the following inequality holds: 
\begin{align*}
\left| \frac{1}{\ell!}\sum_{T\subseteq [n], |T| = \ell} \Perm(A_T) \right| \le \sqrt{10\binom{n}{\ell}}\cdot \prod_{i=1}^{\ell} \pth{\frac{1}{n}\sum_{j=1}^n a_{ij}^2}^{\frac{1}{2}}. 
\end{align*}
\end{lemma}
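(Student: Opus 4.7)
The first move is to recognize the left-hand side as a ``mixed'' elementary symmetric polynomial. Writing $x_i \in \bR^n$ for the $i$-th row of $A$, each injective map $\sigma\colon [\ell] \to [n]$ has a unique image $T$, so
\[
\frac{1}{\ell!}\sum_{|T|=\ell}\Perm(A_T) \;=\; E_\ell(x_1,\dots,x_\ell) \;:=\; \frac{1}{\ell!}\sum_{\sigma\colon[\ell]\to[n]\,\mathrm{inj}} \prod_{i=1}^\ell x_{i,\sigma(i)}.
\]
This is the unique symmetric $\ell$-linear form whose diagonal restriction is $e_\ell(x,\dots,x)=e_\ell(x)$. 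The hypothesis $\sum_{j}a_{ij}=0$ says each $x_i$ lies in the zero-sum subspace $V_0 := \{v\in\bR^n:\sum_j v_j=0\}$.

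\noindent\textbf{Step 1 (polarization).} Using the standard polarization identity for symmetric multilinear forms,
\[
\ell!\, E_\ell(x_1,\dots,x_\ell) \;=\; \sum_{S\subseteq[\ell]} (-1)^{\ell-|S|}\, e_\ell\!\Big(\sum_{i\in S} x_i\Big).
\]
Since $V_0$ is closed under addition, each $u_S := \sum_{i\in S} x_i$ still satisfies $\sum_j (u_S)_j=0$, so the new Maclaurin-type inequality \eqref{eq:ineq_ESP} applies termwise:
\[
\bigl|e_\ell(u_S)\bigr| \;\le\; C\sqrt{\binom{n}{\ell}}\left(\frac{\|u_S\|_2^2}{n}\right)^{\ell/2}.
\]

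\noindent\textbf{Step 2 (where cancellation must be used).} A crude triangle-inequality sum over $S$, combined with $\|u_S\|_2 \le |S|\max_i\|x_i\|_2$, produces a factor $\sum_{k}\binom{\ell}{k}k^\ell/\ell! \asymp e^\ell$, which is far too lossy; it also yields a $\max_i\|x_i\|^\ell$ rather than the product $\prod_i\|x_i\|$. To land on the sharp product bound with a dimension- and $\ell$-free constant, the signed cancellation in the polarization formula must be exploited in a more refined way.

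\noindent\textbf{Step 3 (complex integral representation).} The cleanest route is to pass to the complex version. Since $\ell!\,E_\ell$ is the coefficient of $t_1\cdots t_\ell$ in the polynomial $e_\ell(\sum_i t_i x_i)$, Cauchy's formula gives
\[
\ell!\,E_\ell(x_1,\dots,x_\ell) \;=\; \frac{1}{(2\pi i)^\ell}\oint\!\cdots\!\oint \frac{e_\ell(z_1 x_1+\cdots+z_\ell x_\ell)}{z_1^2\cdots z_\ell^2}\,dz_1\cdots dz_\ell,
\]
where the contours are circles $|z_i|=r_i$ chosen later. For any $z\in\bC^\ell$, the vector $y(z) := \sum_i z_i x_i\in\bC^n$ still has $\sum_j y_j(z)=0$, so one can invoke the \emph{complex} Maclaurin-type inequality (the one flagged in \Cref{sec:techniques} and used later for \Cref{thm:two_mixtures}) to bound $|e_\ell(y(z))|$ pointwise on the contour in terms of $\sum_j |y_j(z)|^2/n$. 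Estimating this quantity by Parseval in $\theta_i=\arg z_i$ (rather than by a crude triangle inequality) converts the integrand norm into $\sum_i r_i^2\|x_i\|_2^2$, and the optimal choice $r_i \propto 1/\|x_i\|_2$ produces exactly the product $\prod_i \|x_i\|_2/\sqrt n$. The alternative implementation is the Gaussian identity $\ell!\,E_\ell = \bE[\bar g_1\cdots\bar g_\ell\, e_\ell(Y)]$ for $g_i\iiddistr \calCN(0,1)$ and $Y=\sum g_i x_i\in\bC^n$, together with an $L^p$--$L^q$ estimate (plain Cauchy--Schwarz again loses $e^\ell$).

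\noindent\textbf{Main obstacle.} The nontrivial step is Step 3: replacing the naive bound $\|u_S\|_2 \lesssim |S|\max\|x_i\|_2$ by one that (i) retains the product structure $\prod_i\|x_i\|_2$ and (ii) avoids an $\ell^\ell/\ell!$ blow-up. This is where the full strength of the new Maclaurin inequality, applied in the complex setting so that phase cancellation on the contour can be used, is essential; without zero-sum rows the analogous statement would simply be false. The final constant $10$ then comes from aggregating the absolute constant in \eqref{eq:ineq_ESP}, the Parseval/contour factors, and a small amount of slack.
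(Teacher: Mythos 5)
You have correctly identified the right structure: the left-hand side is the symmetric multilinear form whose diagonal restriction is $e_\ell(x)$, the rows live in the zero-sum subspace, and the whole game is to transfer the diagonal bound (\Cref{thm:ESP}) to the off-diagonal case without losing an $\ell$-dependent factor. But your proposal stops exactly at that transfer. Step 1--2 correctly diagnose that polarization costs a factor $\asymp e^\ell$ and destroys the product structure, and Step 3 is an unexecuted sketch that you yourself label ``the main obstacle.'' The missing ingredient is a classical theorem of S.~Banach (\Cref{lemma:banach} in the paper): for a \emph{symmetric} multilinear form on a Hilbert space, the supremum over the product of unit balls equals the supremum of the diagonal restriction over the unit ball --- i.e.\ the polarization constant is exactly $1$, not $e^\ell$. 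Applying this on the Hilbert space $\calH=\{x\in\bR^n:\sum_i x_i=0\}$ reduces \Cref{lemma:hadamard} losslessly to the identical-rows case, which is precisely the real inequality \eqref{eq:real_case} of \Cref{thm:ESP} after rescaling each row to $\frac1n\sum_j a_{ij}^2=1$. Without this (or an equivalent result), your argument does not close.

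Your proposed workaround in Step 3 also has quantitative problems beyond being unexecuted. The pointwise bound on $|e_\ell(y(z))|$ for complex zero-sum $y$ is \eqref{eq:complex_case}, which carries a factor $3\sqrt{\ell+1}$ in front of $\binom{n}{\ell}$; for $\ell\ge 11$ this already exceeds the constant $10$ in the lemma, so the complex route cannot yield the stated bound uniformly in $\ell$ (the paper needs the harder real-rooted/saddle-point analysis to get an absolute constant). Moreover, the ``Parseval in $\theta_i=\arg z_i$'' step averages $\sum_j|y_j(z)|^2$ over phases, but the integrand involves $\bigl(\sum_j|y_j(z)|^2\bigr)^{\ell/2}$, and Jensen runs the wrong way: you need an upper bound on the $\ell/2$-th moment of a quadratic form in random phases, which reintroduces combinatorial factors growing with $\ell$. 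So the route is not merely incomplete; as sketched it would not deliver a constant independent of $\ell$.
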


In the special case $\ell = n$, \Cref{lemma:hadamard} coincides with the Hadamard-type inequality for permanents \cite{carlen2006inequality}, even without the condition that all row sums are zero. However, for $\ell < n$, this condition becomes crucial: without this condition, if $A$ is the all-ones matrix, the LHS of \Cref{lemma:hadamard} would be a much larger quantity $\binom{n}{\ell}$. The proof of \Cref{lemma:hadamard} turns out to be involved and is the central theme of \Cref{subsec:key-inequality}. 

Applying \Cref{lemma:hadamard} to \eqref{eq:chi-squared-basis}, for a given subset $S$ with $|S|=\ell\ge 2$ we get
\begin{align*}
&\bE_{\bQ_n}\qth{\pth{\bE_{\pi\sim\Unif(S_n)}\qth{\prod_{i\in S} \frac{\rmd \Psi_{\pi(i)}}{\rmd \overline{P}}(X_i)}}^2} \le \frac{10}{\binom{n}{\ell}} \bE_{\bQ_n}\qth{ \prod_{i=1}^{\ell} \pth{\frac{1}{n}\sum_{j=1}^n \frac{\rmd \Psi_j}{\rmd \overline{P}}(X_i)^2 }} \\
&= \frac{10}{\binom{n}{\ell}} \pth{\bE_{Z\sim \overline{P}}\qth{\frac{1}{n}\sum_{j=1}^n \frac{\rmd \Psi_j}{\rmd \overline{P}}(Z)^2}}^{\ell} = \frac{10}{\binom{n}{\ell}} \pth{ \frac{1}{n}\sum_{j=1}^n \chi^2(P_j\|\overline{P})}^{\ell} \le \frac{10}{\binom{n}{\ell}} \Capa^\ell. 
\end{align*}
In addition, for $\ell=1$ the LHS of \Cref{lemma:hadamard} is zero since all row sums are zero. Therefore, \eqref{eq:chi-squared-basis} gives that
\begin{align*}
\chi^2(\bP_n \| \bQ_n) \le \sum_{S\subseteq [n]: |S|\ge 2} \frac{10}{\binom{n}{|S|}} \Capa^{|S|} = \sum_{\ell=2}^n \binom{n}{\ell}\cdot \frac{10}{\binom{n}{\ell}} \Capa^{\ell} = 10\sum_{\ell=2}^n \Capa^{\ell}, 
\end{align*}
which is the first part \eqref{eq:upper_bound_1} of \Cref{thm:main}. 


\subsection{A key inequality}\label{subsec:key-inequality}
This section is devoted to the proof of \Cref{lemma:hadamard}, which consists of several steps. First we show that it suffices to prove \Cref{lemma:hadamard} with identical rows. Let $r_1,\cdots,r_\ell\in \calH$ be the rows of $A$, where $\calH = \{x\in \bR^n: \sum_{i=1}^n x_i = 0\}$ is a Hilbert space equipped with the Euclidean inner product. 
Writing the LHS of \Cref{lemma:hadamard} as a function of $(r_1, \dots, r_\ell)$, it is clear that
\begin{equation*}
	L(r_1, \dots, r_\ell) := \frac{1}{\ell!}\sum_{T\subseteq [n], |T| = \ell} \Perm(A_T)
\end{equation*}
is multilinear in $(r_1,\ldots,r_\ell)$. We invoke the following deep result due to S. Banach~\cite{banach1938homogene}. 

\begin{lemma}\label{lemma:banach}
Let $L(x_1,\cdots,x_n)$ be a symmetric multilinear form from a Hilbert space $(\calH, \langle \cdot , \cdot \rangle)$ to either $\bR$ or $\bC$. Suppose that
\begin{align*}
\sup\sth{ |L(x,x,\ldots,x)|: |x|\le 1 } \le M.
\end{align*}
Then it also holds that
\begin{align*}
\sup\sth{ |L(x_1,x_2,\ldots,x_n)|: |x_1|\le 1, \ldots, |x_n|\le 1 } \le M. 
\end{align*}
\end{lemma}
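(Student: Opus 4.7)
The plan is to follow the classical argument of Banach via a finite-dimensional reduction, a compactness-and-variational analysis at the extremum, and a rigidity step that collapses the extremal configuration to the diagonal. First, I would restrict $L$ to the subspace $\calH_0 := \mathrm{span}(x_1,\ldots,x_n) \subseteq \calH$, on which $L$ is still a symmetric $n$-linear form, and invoke compactness of the product of unit spheres in $\calH_0$ to conclude that the multilinear supremum
\[
\|L\|_{\mathrm{m}} := \sup\{|L(y_1,\ldots,y_n)| : |y_i| \le 1,\, y_i \in \calH_0\}
\]
is attained at some configuration $(x_1^*,\ldots,x_n^*)$ with $|x_i^*|=1$. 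It then suffices to prove $|c| \le M$, where $c := L(x_1^*,\ldots,x_n^*)$.

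Next, for each slot $i$, the map $y \mapsto L(x_1^*,\ldots,y,\ldots,x_n^*)$ is a continuous linear functional on $\calH_0$; its unit-ball maximum must be attained at $y = x_i^*$ (up to a unimodular phase in the complex case). Riesz representation combined with the value $c$ attained at $y = x_i^*$ yields the family of variational identities
\[
L(x_1^*,\ldots,x_{i-1}^*,\,y,\,x_{i+1}^*,\ldots,x_n^*) = c\,\langle x_i^*, y\rangle, \qquad y \in \calH_0,
\]
simultaneously for each $i \in [n]$. Combining these with the $S_n$-symmetry of $L$---in particular, substituting $y = x_j^*$ in the slot-$i$ identity and comparing with the analogue for slot $j$---places rigid constraints on the Gram matrix $G_{ij} = \langle x_i^*, x_j^*\rangle$. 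I expect that a refinement of this comparison together with a stationarity analysis under the rotations $(x_i^*, x_j^*) \mapsto (\cos\theta\, x_i^* + \sin\theta\, x_j^*,\, -\sin\theta\, x_i^* + \cos\theta\, x_j^*)$ in each slot-pair forces $x_i^* = \varepsilon_i x_1^*$ for some unimodular scalars $\varepsilon_i$. Multilinearity then gives $|c| = |L(x_1^*,\ldots,x_1^*)| \le M$.

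The main obstacle is obtaining the \emph{sharp} constant $1$ in the comparison $\|L\|_{\mathrm{m}} \le M$. The naive symmetric polarization identity
\[
L(x_1,\ldots,x_n) = \frac{1}{n!\,2^n}\sum_{\varepsilon\in\{\pm 1\}^n} \varepsilon_1\cdots\varepsilon_n\, L\Bigl(\textstyle\sum_i \varepsilon_i x_i,\ldots,\sum_i \varepsilon_i x_i\Bigr)
\]
only yields $\|L\|_{\mathrm{m}} \le \tfrac{n^n}{n!}\, M$, which grows exponentially in $n$; avoiding this loss truly requires the Hilbert-space geometry via Riesz representation and the extremal argument above. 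A secondary subtlety arises in the complex case, where slot-wise linear maximization only determines each $x_i^*$ up to a phase; this can be absorbed into the unimodular scalars $\varepsilon_i$ but demands an extra gauge-fixing step beyond the real case.
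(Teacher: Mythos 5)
First, note that the paper does not prove \Cref{lemma:banach} at all: it is invoked as a classical theorem of Banach with a citation to \cite{banach1938homogene}. So your task here is really to reconstruct Banach's argument, and your sketch does start along the standard lines (finite-dimensional reduction, compactness, and the first-order identities $L(x_1^*,\ldots,y,\ldots,x_n^*)=c\,\langle x_i^*,y\rangle$, which are correct).

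However, the crucial rigidity step is both unproved ("I expect that\ldots") and, as stated, false. It is not true that an extremal configuration must satisfy $x_i^*=\varepsilon_i x_1^*$. Take $n=2$, $\calH=\bR^2$, $L(x,y)=x_1y_1-x_2y_2$: for every $\phi$ the pair $x^*=(\cos\phi,\sin\phi)$, $y^*=(\cos\phi,-\sin\phi)$ attains the multilinear supremum $1$ and satisfies all of your variational identities, yet $y^*\neq\pm x^*$ for generic $\phi$. Hence no amount of comparing the slot-$i$ and slot-$j$ Riesz identities can force diagonality; indeed, substituting $y=x_j^*$ into the slot-$i$ identity produces a configuration in which $x_j^*$ appears twice and $x_i^*$ not at all, which the symmetry of $L$ does not relate to the slot-$j$ identity at $y=x_i^*$. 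Your proposed rotations are also not admissible first-order perturbations: $\cos\theta\,x_i^*+\sin\theta\,x_j^*$ leaves the unit sphere to first order whenever $\langle x_i^*,x_j^*\rangle\neq 0$, so ``stationarity under these rotations'' is not a consequence of extremality.

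The classical argument proves something weaker but sufficient: \emph{some} diagonal configuration attains the supremum. The engine is the parallelogram identity. With $u=(x_i+x_j)/|x_i+x_j|$, $v=(x_i-x_j)/|x_i-x_j|$, $\alpha=|x_i+x_j|/2$, $\beta=|x_i-x_j|/2$, symmetry of $L$ in slots $i,j$ kills the cross terms and gives
\begin{align*}
L(\ldots,x_i,\ldots,x_j,\ldots)=\alpha^2 L(\ldots,u,\ldots,u,\ldots)-\beta^2 L(\ldots,v,\ldots,v,\ldots),\qquad \alpha^2+\beta^2=1,
\end{align*}
so the pair $(x_i,x_j)$ can be replaced by $(u,u)$ or $(v,v)$ without decreasing $|L|$. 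One then needs a genuine iteration/selection argument (e.g.\ extracting, from the compact set of maximizers, one optimizing a suitable auxiliary functional) to collapse all slots simultaneously; this, together with the passage from $\bR$ to $\bC$ (which your note correctly flags but does not resolve), is where the real content of Banach's theorem lies and is missing from your proposal.
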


To apply \Cref{lemma:banach}, we note that the multilinear form $L$ in \Cref{lemma:hadamard} is clearly symmetric, and \Cref{lemma:hadamard} precisely asks for an upper bound on the operator norm of $L$. By \Cref{lemma:banach}, it suffices to establish \Cref{lemma:hadamard} when all rows of $A$ are identical, denoted by a vector $x\in \calH$; also, by scaling we may assume that $\sum_{i=1}^n x_i^2 = n$. In this case, the target quantity in \Cref{lemma:hadamard} becomes $e_\ell(x) := \sum_{S\subseteq [n]: |S|=\ell} \prod_{i\in S} x_i$, the elementary symmetric polynomial of $x=(x_1,\cdots,x_n)$. The central inequality of this section is summarized in the following theorem. 

\begin{thm}\label{thm:ESP}
For $0\le \ell \le n$, the following upper bound holds: 
\begin{enumerate}
    \item If $x\in \bC^n$ with $\sum_{i=1}^n x_i = 0$ and $\sum_{i=1}^n |x_i|^2 = n$, then (define $0^0 := 1$)
    \begin{align}\label{eq:complex_case}
       |e_\ell(x)|^2 \le \frac{n^n}{\ell^\ell (n-\ell)^{n-\ell}} < 3\sqrt{\ell+1}\cdot \binom{n}{\ell}. 
    \end{align}
    \item If in addition the condition $x\in \bR^n$ holds, then an improved upper bound is available: 
    \begin{align}\label{eq:real_case}
        |e_\ell(x)| \le \sqrt{10\binom{n}{\ell}}. 
    \end{align}
\end{enumerate}
\end{thm}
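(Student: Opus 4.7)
The natural starting point is the generating function $p(z) := \prod_{i=1}^n (1+z x_i) = \sum_{k=0}^n e_k(x) z^k$, from which Cauchy's integral formula on $|z|=r$ combined with Parseval's identity produces
\begin{equation*}
\sum_{k=0}^n |e_k(x)|^2\, r^{2k} = \int_0^1 \prod_{i=1}^n \bigl|1+re^{2\pi i\theta}x_i\bigr|^2 \,d\theta.
\end{equation*}
The plan is to bound the integrand pointwise in $\theta$, discard all but the $k=\ell$ term on the left, and then optimize the free radius $r$.

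For part (1), the constraints $\sum_i x_i = 0$ and $\sum_i |x_i|^2 = n$ are exactly what make $\tfrac{1}{n}\sum_{i=1}^n |1+re^{2\pi i\theta}x_i|^2 = 1+r^2$ for every $\theta$, so AM-GM yields $\prod_i|1+re^{2\pi i\theta}x_i|^2 \le (1+r^2)^n$ uniformly in $\theta$. Parseval collapses to $|e_\ell(x)|^2 \le (1+r^2)^n/r^{2\ell}$, and elementary calculus identifies the minimizer $r_\star^2 = \ell/(n-\ell)$, which plugs in to give exactly $n^n/(\ell^\ell(n-\ell)^{n-\ell})$. The companion inequality $n^n/(\ell^\ell(n-\ell)^{n-\ell}) < 3\sqrt{\ell+1}\binom{n}{\ell}$ is then a routine Stirling estimate, using $\sqrt{\ell(n-\ell)/n} \le \sqrt{\ell}$ and $\sqrt{2\pi} < 3$, with an elementary check of the boundary cases $\ell \in \{0, n\}$.

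For part (2), the above route cannot by itself be sharp: equality in AM-GM would force $|1+re^{2\pi i\theta}x_i|^2$ to agree across $i$ for every $\theta$, which in the real setting only permits $x=0$, contradicting $\sum x_i^2 = n$. The plan to recover the missing $\sqrt{\ell+1}$ factor is to exploit the reality of the coefficients of $p(z)$: since $e_\ell(x) r^\ell = \int_0^1 \operatorname{Re}\!\bigl[p(re^{2\pi i\theta}) e^{-2\pi i\ell\theta}\bigr]\,d\theta$, Jensen's inequality produces the sharper bound
\begin{equation*}
|e_\ell(x)|^2\, r^{2\ell} \le \tfrac{1}{2} \sum_k e_k(x)^2 r^{2k} + \tfrac{1}{2}\, r^{2\ell}\, [z^{2\ell}] p(z)^2.
\end{equation*}
The cross-term $[z^{2\ell}]p(z)^2$ is the $(2\ell)$-th elementary symmetric polynomial of the doubled vector $(x_1,x_1,\dots,x_n,x_n) \in \bR^{2n}$, which still satisfies the normalization assumptions in dimension $2n$; feeding this back through part (1) applied at dimension $2n$ and performing a Laplace/saddle-point expansion around $r_\star$ should supply the Gaussian-type $1/\sqrt{\ell+1}$ gain.

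The principal obstacle will be nailing down the explicit absolute constant $10$ in the real case. Extremal examples such as $x\in\{\pm1\}^n$ with balanced signs already confirm the shape of the bound: there $p(z)=(1-z^2)^{n/2}$ gives $e_{2k}(x)^2 = \binom{n/2}{k}^2 \le \binom{n}{2k}$ by Vandermonde, with ratio $\sim 1/\sqrt{\pi\ell}$ by Stirling, which matches the desired Gaussian correction exactly. Converting this heuristic into a clean uniform statement will likely require either a careful second-order saddle-point expansion rather than the elementary manipulations above, or a Banach-style symmetrization, in the spirit of Lemma~\ref{lemma:banach}, applied to a multilinearized version of $e_\ell$.
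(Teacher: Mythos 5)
Your proof of part (1) is correct and is essentially the paper's argument: Cauchy's formula (or Parseval) on the circle $|z|=r$, the AM--GM bound $\prod_i |1+re^{2\pi i\theta}x_i|^2 \le (1+r^2)^n$ (which uses both constraints on $x$ exactly as you say), optimization at $r_\star^2 = \ell/(n-\ell)$, and Stirling. Nothing to add there.

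Part (2), however, has a genuine gap, and the mechanism you propose cannot close it. Your Jensen step gives
\begin{equation*}
|e_\ell(x)|^2 r^{2\ell} \le \tfrac12\sum_k e_k(x)^2 r^{2k} + \tfrac12 r^{2\ell}\,e_{2\ell}(x\oplus x),
\end{equation*}
where $x\oplus x\in\bR^{2n}$ is the doubled vector. Run the numbers at $r=r_\star$: the first term is bounded by $\tfrac12(1+r_\star^2)^n$ via AM--GM, and part (1) applied in dimension $2n$ gives $|e_{2\ell}(x\oplus x)|\le (2n)^n/\bigl((2\ell)^\ell(2n-2\ell)^{n-\ell}\bigr) = n^n/\bigl(\ell^\ell(n-\ell)^{n-\ell}\bigr)$, so the second term is also exactly $\tfrac12(1+r_\star^2)^n$. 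The two halves recombine to $|e_\ell(x)|^2 \le (1+r_\star^2)^n/r_\star^{2\ell}$, i.e.\ you recover part (1) verbatim with no gain. Worse, the first term alone is fatal: even granting the theorem you are trying to prove, $\sum_k e_k^2 r_\star^{2k}$ is only controlled by $10\sum_k\binom{n}{k}r_\star^{2k}=10(1+r_\star^2)^n$, and $(1+r_\star^2)^n/(2r_\star^{2\ell})\sim \binom{n}{\ell}\sqrt{\pi\ell(n-\ell)/(2n)}$ by Stirling, which already exceeds the target $10\binom{n}{\ell}$ by a factor of order $\sqrt{\ell}$. So no refinement of the cross-term can rescue this decomposition; the $\sqrt{\ell}$ loss is baked into using Parseval's full sum as an upper bound for a single coefficient.

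What is actually needed, and what the paper does, is structural: first show that the maximizer of $|e_\ell|$ over the constraint set has support on at most two values. This is done by induction on $n$, combining the Lagrange stationarity conditions $e_{\ell-1}(x^\star\setminus\{x_i^\star\})=\lambda x_i^\star+\mu$ with a real-rootedness fact (if $e_k(y)=e_{k+1}(y)=0$ for real $y$ then all higher $e_m(y)$ vanish) to rule out four or more distinct nonzero values, plus a perturbation argument to rule out three. Only then, for the explicit two-valued vectors $x^{(k)}$, does one run a genuine saddle-point analysis of the contour integral --- splitting $\int_0^{2\pi}$ into a neighborhood of the real axis and its complement, with a tuned cutoff $\delta\asymp\sqrt{n/(\ell(n-\ell))}$ --- to extract the $1/\sqrt{\binom{n}{\ell}}$-relative-to-Maclaurin gain with an explicit constant; the cases $\ell\le 3$ are handled separately by Newton's identities. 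Your extremal example $x\in\{\pm1\}^n$ correctly predicts the shape of the answer, and a Banach-type symmetrization is indeed used in the paper, but one level up (to reduce Lemma~\ref{lemma:hadamard} to \Cref{thm:ESP}), not inside the proof of \Cref{thm:ESP} itself, which concerns a single vector rather than a multilinear form.
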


It is clear that the inequality \eqref{eq:real_case} in the real case proves \Cref{lemma:hadamard} after the reduction through \Cref{lemma:banach}. The complex inequality \eqref{eq:complex_case} will also be useful in \Cref{sec:permanents}. Again, we remark that the condition $\sum_{i=1}^n x_i = 0$ is crucial for \Cref{thm:ESP}, for $x=(1,\ldots,1)$ would lead to $e_\ell(x) = \binom{n}{\ell}$.
\Cref{thm:ESP} gives a bound on $|e_\ell(x)|$ when $e_1(x)$ and $e_2(x)$ are known.
It therefore adds to a recent line of work \cite{gopalan2014inequalities, meka2019pseudorandom, doron2020log, tao2023maclaurin} proving bounds on elementary symmetric polynomials given the values of two consecutive lower-order elementary symmetric polynomials.
For real $x\in \mathbb{R}^n$, this prior work can be used to deduce the bound
\begin{align*}
|e_\ell(x)|^2 \le C\cdot A^{\ell}\binom{n}{\ell}, 
\end{align*}
for some constant $A>1$. The best known constant is obtained via the differentiation trick in \cite{tao2023maclaurin}, giving
\begin{align*}
|e_\ell(x)|^2 \le \binom{n}{\ell}^2 \pth{\frac{\ell-1}{n-1}}^{\ell}, 
\end{align*}
corresponding to a constant $A=e$ by Stirling's approximation. This idea could also be applied to the complex case, where using the Schoenberg conjecture/Malamud--Pereira theorem \cite{malamud2005inverse,pereira2003differentiators} leads to the same upper bound.  

However, the application to our problem requires to have the best possible constant $A=1$ (note that the example in \eqref{eq:balanced_expectation} implies that $A<1$ is impossible). For example, the quantity $f(\mu)$ in \eqref{eq:ub_f_mu} could be arbitrarily close to $1$, and only $A=1$ makes the geometric sum $\sum_{\ell} A^{\ell} f(\mu)^{\ell}$ converge; we will also see similar scenarios in the application of \Cref{lemma:UB-individual-sum} in \Cref{sec:permanents}. As shown in \Cref{thm:ESP}, this constant turns out to be achievable (possibly at some tolerable price of $\mathrm{poly}(\ell)$) using the saddle point method. 

\begin{proof}[Proof of \Cref{thm:ESP} (First Part)]
It is clear that $e_\ell(x)$ is the coefficient of $z^\ell$ in $\prod_{k=1}^n (1+x_kz)$. By Cauchy's formula, for any $r>0$ we have
\begin{align*}
|e_\ell(x)| = \left|\frac{1}{2\pi \icom} \oint_{|z|=r}\frac{\prod_{k=1}^n (1+x_kz)}{z^{\ell}}\frac{\rmd z}{z}\right| \le \max_{|z|=r} \left|\frac{\prod_{k=1}^n (1+x_kz)}{z^{\ell}}\right|. 
\end{align*}
By AM-GM and the assumptions on $x$, 
\begin{align*}
\prod_{k=1}^n |1+x_kz|^2 &= \prod_{k=1}^n \pth{1+|x_k|^2 |z|^2 + 2\Re(x_k z)} \\
&\le \pth{\frac{1}{n}\sum_{k=1}^n \pth{1+|x_k|^2 |z|^2 + 2\Re(x_k z)}}^n = \pth{1+|z|^2}^n. 
\end{align*}
A combination of the previous steps gives 
\begin{align*}
|e_{\ell}(x)|^2 \le \inf_{r>0} \frac{(1+r^2)^n}{r^{2\ell}} = \frac{n^n}{\ell^{\ell}(n-\ell)^{n-\ell}}, 
\end{align*}
by choosing $r^2 = \frac{\ell}{n-\ell}$ for $1\le \ell\le n-1$, $r\to 0$ for $\ell = 0$, and $r\to\infty$ for $\ell = n$, establishing the first inequality of \eqref{eq:complex_case}. The second step of \eqref{eq:complex_case} simply follows from Stirling's approximation. 
\end{proof}

After the initial draft of this paper, we were informed that the same programs in \Cref{lemma:banach} and the first part of \Cref{thm:ESP} have already been employed in \cite{roos2015bobkov}. However, the second part \eqref{eq:real_case} of \Cref{thm:ESP} is still new, with a more involved proof. First, relying critically on a property of real-rooted polynomials (which fails for the complex case), we argue that the coordinates of the real maximizer $x$ of $|e_\ell(x)|$ can only take two values. The evaluation of $|e_\ell(x)|$ in the resulting simplified scenario is still challenging, and we apply a more careful saddle point analysis to arrive at \eqref{eq:real_case}. We defer the details to \Cref{append:saddlepoint}.
\section{Upper bound via matrix permanents}\label{sec:permanents}
The bounds presented in \Cref{sec:basis} are based on the identity
\begin{align*}
	\frac{\rmd \bP_n}{\rmd \bQ_n}(X) = \bE_{\pi\sim\Unif(S_n)}\qth{\prod_{i=1}^n \frac{\rmd P_{\pi(i)}}{\rmd \overline{P}}(X_i)} = \sum_{S\subseteq [n]} \bE_{\pi\sim\Unif(S_n)}\qth{\prod_{i\in S} \frac{\rmd \Psi_{\pi(i)}}{\rmd \overline{P}}(X_i)}. 
\end{align*}
As mentioned in \Cref{sec:techniques}, this expression may be viewed as an expansion of the matrix permanent in~\eqref{eq:lr_is_perm} around the all-ones matrix.
The benefit of this approach is that each of the terms in this expansion are orthogonal in $L^2(\bQ_n)$.
However, our proof relies crucially on \Cref{lemma:banach}, which is used to crudely upper bound each permanent appearing in the above expansion by the permanent of a matrix with identical rows.
This bound therefore ignores any additional structure in the likelihood ratio, and fails to give tight results when $\Capa > 1$.

In this section, we derive refined bounds for $\Capa > 1$ by treating the $\chi^2$ divergence between $\bP_n$ and $\bQ_n$ directly.
This gives proofs of the other upper bounds \eqref{eq:upper_bound_2} and \eqref{eq:upper_bound_3} in \Cref{thm:main}.
We also show how to use similar ideas to obtain \Cref{thm:two_mixtures}. 
In \Cref{subsec:permanent} we express the $\chi^2$ divergence between $\bP_n$ and $\bQ_n$ as a proper matrix permanent, and upper bound it in two ways, described in \Cref{subsec:entire_sum} and \Cref{subsec:individual_sum} respectively, both of which rely on the use of complex random variables.  

\subsection{Divergence as a matrix permanent}\label{subsec:permanent}
We first recall the expression given in \Cref{sec:techniques} for $\chi^2(\bP_n \| \bQ_n)$ as a matrix permanent.

\begin{lemma}\label{lemma:permanent}
Given $P_1,\cdots,P_n\in \calP$, define a matrix $A\in \bR^{n\times n}$ as
\begin{align}\label{eq:A-matrix}
A_{ij} = \frac{1}{n}\int \frac{\rmd P_i \rmd P_j}{\rmd \overline{P}}, \qquad \text{with } \overline{P} := \frac{1}{n}\sum_{i=1}^n P_i. 
\end{align}
Then under the setting of $\Cref{thm:main}$, the following identity holds:
\begin{align*}
\chi^2(\bP_n \| \bQ_n) = \frac{n^n}{n!}\Perm(A) - 1. 
\end{align*}
\end{lemma}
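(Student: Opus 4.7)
\textbf{Proof plan for \Cref{lemma:permanent}.} The plan is to follow the permanent-based approach already sketched in \Cref{sec:techniques}. First I would write down the likelihood ratio explicitly: since $\bQ_n = \overline{P}^{\otimes n}$, the Radon--Nikodym derivative factors as
\begin{equation*}
	\frac{\rmd\bP_n}{\rmd\bQ_n}(x_1,\dots,x_n) = \bE_{\pi\sim\Unif(S_n)}\qth{\prod_{i=1}^n \frac{\rmd P_{\pi(i)}}{\rmd\overline{P}}(x_i)} = \frac{1}{n!}\Perm(M(x)),
\end{equation*}
where $M(x)_{ij} := \frac{\rmd P_j}{\rmd\overline{P}}(x_i)$. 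This step only uses the definition of $\bP_n$ in \eqref{eq:bP_n} and the definition of the matrix permanent, so it is essentially bookkeeping.

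Next, using the identity $\chi^2(\bP_n\|\bQ_n)+1 = \bE_{\bQ_n}[(\rmd\bP_n/\rmd\bQ_n)^2]$, I would square the above expression and expand
\begin{equation*}
	\Perm(M(X))^2 = \sum_{\pi,\pi'\in S_n} \prod_{i=1}^n M_{i,\pi(i)}(X_i)\,M_{i,\pi'(i)}(X_i).
\end{equation*}
The key point is that under $\bQ_n$ the coordinates $X_i$ are independent with marginal $\overline{P}$, so the expectation factors across $i$. For each $i$, a single-coordinate calculation gives
\begin{equation*}
	\bE_{X_i\sim\overline{P}}\qth{\frac{\rmd P_{\pi(i)}}{\rmd\overline{P}}(X_i)\frac{\rmd P_{\pi'(i)}}{\rmd\overline{P}}(X_i)} = \int \frac{\rmd P_{\pi(i)}\rmd P_{\pi'(i)}}{\rmd\overline{P}} = n\,A_{\pi(i),\pi'(i)},
\end{equation*}
using exactly the definition \eqref{eq:A-matrix} of $A$. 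Multiplying across $i\in[n]$ contributes the factor $n^n$.

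Finally, I would collapse the double sum over $S_n\times S_n$ into $n!\cdot\Perm(A)$ via the substitution $\sigma = \pi'\circ\pi^{-1}$: for each fixed $\pi$, as $\pi'$ ranges over $S_n$ so does $\sigma$, and $\prod_i A_{\pi(i),\pi'(i)} = \prod_j A_{j,\sigma(j)}$ after reindexing $j=\pi(i)$. Thus
\begin{equation*}
	\sum_{\pi,\pi'\in S_n}\prod_{i=1}^n A_{\pi(i),\pi'(i)} = n!\cdot\Perm(A),
\end{equation*}
and assembling the factors yields $\chi^2(\bP_n\|\bQ_n)+1 = \tfrac{n^n\cdot n!}{(n!)^2}\Perm(A) = \tfrac{n^n}{n!}\Perm(A)$, as desired. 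There is no real obstacle here; the lemma is essentially an identity, and the only step worth double-checking is the reindexing argument in the last display, which relies on $S_n$ being a group so that translation by $\pi$ preserves the uniform counting measure.
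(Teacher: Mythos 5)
Your proposal is correct and follows essentially the same route as the paper: introduce an independent copy $\pi'$ of $\pi$, factor the expectation across coordinates using the product structure of $\bQ_n$ to produce $nA_{\pi(i),\pi'(i)}$, and collapse the double sum over $S_n\times S_n$ into $n!\cdot\Perm(A)$. The reindexing step you flag is exactly the (unstated) final step of the paper's argument, and your bookkeeping of the constants is right.
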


The following lemma summarizes some properties of the matrix $A$ in \eqref{eq:A-matrix}. 

\begin{lemma}\label{lemma:A-property}
The matrix $A$ in \eqref{eq:A-matrix} satisfies the following properties: 
\begin{enumerate}
    \item it is doubly stochastic and PSD; 
    \item its trace satisfies $\trace(A) \le 1 + \Capa$; 
    \item its eigenvalues satisfy $1=\lambda_1>\lambda_2\ge \ldots \ge \lambda_n\ge 0$, with leading eigenvector $v_1 = {\bf 1}$ and a spectral gap
    \begin{align*}
    1 - \lambda_2 \ge \frac{1}{\AffH}. 
    \end{align*}
\end{enumerate}
\end{lemma}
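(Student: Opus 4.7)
The plan is to verify each of the three claims in turn, with the spectral gap being the central content. For Part 1, I would read the algebraic properties off directly from \eqref{eq:A-matrix}. Nonnegativity of the entries is immediate from $\rmd P_i / \rmd \overline{P} \ge 0$; each row sums to $\sum_j A_{ij} = \frac{1}{n}\int \frac{\rmd P_i}{\rmd \overline{P}} \sum_j \frac{\rmd P_j}{\rmd \overline{P}} \, \rmd \overline{P} = \int \rmd P_i = 1$, and column sums follow from the symmetry $A = A^\top$. Writing $f_i := \rmd P_i / \rmd \overline{P}$, the Gram-matrix structure $A_{ij} = \frac{1}{n}\langle f_i, f_j \rangle_{L^2(\overline{P})}$ gives $v^\top A v = \frac{1}{n}\lVert \sum_i v_i f_i \rVert_{L^2(\overline{P})}^2 \ge 0$, so $A \succeq 0$. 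For Part 2, I would compute
\[\trace(A) = \frac{1}{n}\sum_{i=1}^n \int f_i^2 \, \rmd \overline{P} = 1 + \frac{1}{n}\sum_{i=1}^n \chi^2(P_i \| \overline{P}),\]
and observe that the second term is exactly the $\chi^2$-mutual information under the uniform prior on $\{P_1, \dots, P_n\}$, hence at most $\Capa$ by definition.

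For Part 3, the identities $A\mathbf{1} = \mathbf{1}$ (double stochasticity) and $A \succeq 0$ immediately yield $\lambda_1 = 1$ with eigenvector $\mathbf{1}$ and $\lambda_i \in [0, 1]$ for all $i$, the upper bound coming from the Perron--Frobenius operator-norm bound for stochastic matrices. The genuine work is the spectral gap bound $1 - \lambda_2 \ge 1/\AffH$.

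My strategy for the spectral gap is to prove the uniform entrywise lower bound $A_{ij} \ge 1/(n\AffH)$ and then extract the gap from a convex-combination decomposition. Setting $h_i := \sqrt{\rmd P_i / \rmd \overline{P}}$ and applying Jensen's inequality to $x \mapsto x^2$ under the probability measure $\overline{P}$,
\[A_{ij} = \frac{1}{n}\int (h_i h_j)^2 \, \rmd \overline{P} \;\ge\; \frac{1}{n} \Bigl( \int h_i h_j \, \rmd \overline{P} \Bigr)^2 = \frac{1}{n}\bigl(1 - H^2(P_i, P_j)/2\bigr)^2 \;\ge\; \frac{1}{n\AffH},\]
using the Bhattacharyya identity $\int h_i h_j \, \rmd \overline{P} = 1 - H^2(P_i, P_j)/2$ and the definition $\AffH = (1 - \DiamH/2)^{-2}$. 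With this bound in hand, I decompose $A = \tfrac{1}{n\AffH} J + (1 - \tfrac{1}{\AffH}) \tilde{A}$, where $J$ is the all-ones matrix and $\tilde{A} := (1 - \tfrac{1}{\AffH})^{-1}(A - \tfrac{1}{n\AffH} J)$ is by construction nonnegative with all row and column sums equal to $1$, hence doubly stochastic. Since $J$ annihilates $\mathbf{1}^\perp$, each eigenvalue of $A$ on $\mathbf{1}^\perp$ equals $(1 - 1/\AffH)$ times a corresponding eigenvalue of $\tilde{A}$, and these are bounded in absolute value by $1$ by Perron--Frobenius applied to $\tilde{A}$. This delivers $\lambda_2(A) \le 1 - 1/\AffH$, and when $\AffH < \infty$ the entries of $A$ are strictly positive, so the strict separation $\lambda_1 > \lambda_2$ follows from Perron--Frobenius. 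The main subtlety I would flag is the correct entrywise lower bound: naive Cauchy--Schwarz reasoning would produce only $(1 - \DiamH/2)/n$, which is the wrong order; the crucial square arises only through Jensen's inequality applied to $(h_i h_j)^2$, precisely matching the $\AffH = (1-\DiamH/2)^{-2}$ scaling in the statement.
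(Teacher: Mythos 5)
Your proof is correct, and Parts 1 and 2 coincide with the paper's argument (Gram-matrix structure for PSD, and the identity $\trace(A) = 1 + \frac{1}{n}\sum_i \chi^2(P_i\|\overline{P})$ bounded by the capacity via the uniform prior). The heart of Part 3 is also the same: both you and the paper reduce the spectral gap to the entrywise bound $A_{ij} \ge \frac{1}{n}\bigl(1 - H^2(P_i,P_j)/2\bigr)^2 \ge \frac{1}{n\AffH}$; what you call Jensen applied to $(h_ih_j)^2$ is exactly the Cauchy--Schwarz step $\int \frac{\rmd P_i\,\rmd P_j}{\rmd\overline{P}} \ge \bigl(\int\sqrt{\rmd P_i\,\rmd P_j}\bigr)^2$ in the paper. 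You diverge only in how the gap is extracted from this bound: the paper uses the graph-Laplacian quadratic form $x^\top(I-A)x = \sum_{i<j}A_{ij}(x_i-x_j)^2$ evaluated on unit vectors orthogonal to $\mathbf{1}$, whereas you write $A = \frac{1}{n\AffH}J + (1-\frac{1}{\AffH})\tilde A$ with $\tilde A$ symmetric doubly stochastic and invoke $\|\tilde A\|_{\mathrm{op}}\le 1$ on $\mathbf{1}^\perp$. The two mechanisms are equivalent in strength here (your decomposition additionally controls the most negative eigenvalue, which is moot since $A\succeq 0$); your version also handles the strictness $\lambda_1>\lambda_2$ via Perron--Frobenius, which the paper leaves implicit. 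Two small points to tidy: the decomposition degenerates when $\AffH = 1$ (then $A = J/n$ and the claim is immediate), and the strict inequality $\lambda_1>\lambda_2$ genuinely requires $\AffH<\infty$ (e.g.\ two mutually singular $P_i$ give $A=I$), a caveat neither you nor the paper states but which your positivity argument at least makes visible.
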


By \Cref{lemma:permanent} and \Cref{lemma:A-property}, the computation of the $\chi^2$ divergence $\chi^2(\bP_n\|\bQ_n)$ reduces to upper bounding the permanent of $A$. Moreover, in light of the van der Waerden conjecture/Egorychev--Falikman theorem \cite{van1926aufgabe,egorychev1981solution,falikman1981proof}, which states that $\Perm(A)\ge \frac{n!}{n^n}$ for all $n\times n$ doubly stochastic matrices $A$, showing that $\chi^2(\bP_n\|\bQ_n) = O(1)$ amounts to showing that $\Perm(A)$ is nearly as small as is possible for a doubly-stochastic matrix. Motivated by the eigenstructure of $A$ established in \Cref{lemma:A-property}, we write $A = UDU^\top$ for an orthogonal matrix $U\in \bR^{n\times n}$ and a diagonal matrix $D = \mathrm{diag}(\lambda_1,\cdots,\lambda_n)$ consisting of the eigenvalues. By a generalized Cauchy--Binet formula for permanents \cite[Theorem 2.1]{marcus1965generalized}, it holds that
\begin{align*}
	\Perm(A) = \sum_{\substack{(\ell_1,\ldots,\ell_n)\in \naturals^n \\ \sum_{i=1}^n \ell_i = n}} \frac{\Perm(U_{\ell_1,\ldots,\ell_n})^2}{\ell_1!\cdots \ell_n!}\lambda_1^{\ell_1}\cdots \lambda_n^{\ell_n}, 
\end{align*}
where $U_{\ell_1,\ldots,\ell_n}$ denotes the $n\times n$ matrix with the first column of $U$ appearing $\ell_1$ times, second column appearing $\ell_2$ times, and so on. By taking $\lambda_1=1$ into account, we obtain the equality
\begin{align}\label{eq:polynomial-eigenvalue}
\chi^2(\bP_n \| \bQ_n) +1 = \frac{n^n}{n!} \sum_{\ell=0}^n \sum_{\substack{(\ell_2,\ldots,\ell_n)\in \naturals^{n-1} \\ \sum_{i=2}^n \ell_i = \ell}} \frac{\Perm(U_{n-\ell,\ell_2,\ldots,\ell_n})^2}{(n-\ell)!\ell_2!\cdots \ell_n!}\lambda_2^{\ell_2}\cdots \lambda_n^{\ell_n} =: \sum_{\ell=0}^n S_\ell, 
\end{align}
which is a sum of homogeneous polynomials $S_\ell$ in $(\lambda_2,\ldots,\lambda_n)$ of total degree $\ell=0,1,\ldots,n$. 

To relate the permanent view \eqref{eq:polynomial-eigenvalue} with the doubly centered expansion in \Cref{sec:basis}, we refer to \Cref{sec:identity} for some identities between $S_\ell$ and related quantities. In particular, we note that the permanent $\Perm(U_{n-\ell,\ell,0,\ldots,0})$ is exactly a multiple of $e_\ell(u_2)$, the elementary symmetric polynomial of second column vector $u_2$ of $U$. Therefore, \Cref{thm:ESP} could be used to upper bound this quantity. In addition, \Cref{lemma:banach} reduces the general quantity $\Perm(U_{n-\ell,\ell_2,\ldots,\ell_n})$ to the case $\Perm(U_{n-\ell,\ell,0,\ldots,0})$ with identical columns, so the arguments in \Cref{sec:basis} leads to an upper bound of \eqref{eq:polynomial-eigenvalue} in terms of $\sum_{i=2}^n \lambda_i\le \Capa$ by \Cref{lemma:A-property}. However, this approach does not capture additional structures of $(\lambda_2,\dots,\lambda_n)$ such as the spectral gap. To this end, it might be a natural idea to establish pointwise upper bounds on $\Perm(U_{n-\ell,\ell_2,\ldots,\ell_n})$.
We have not succeeded in this approach and leave it as an open direction; instead, we choose to upper bound the individual sum $S_\ell$ or the entire sum $\sum_{\ell=0}^n S_\ell$. These form the topics of the subsequent sections.  

\subsection{Proof of \Cref{thm:main} via bounding the entire sum}\label{subsec:entire_sum}
In this section, we upper bound the entire sum $\sum_{\ell=0}^n S_{\ell}$ in \eqref{eq:polynomial-eigenvalue} in order to prove \Cref{thm:main}. The central result of this section is the following upper bound:

\begin{lemma}\label{lemma:UB-entire-sum}
Using the notation of \eqref{eq:polynomial-eigenvalue}, for any $\lambda_2,\dots,\lambda_n\ge 0$ it holds that
\begin{align*}
\sum_{\ell=0}^n S_{\ell} \le \sum_{\ell=0}^n \sum_{\substack{(\ell_2,\ldots,\ell_n)\in \naturals^{n-1} \\ \sum_{i=2}^n \ell_i = \ell}} \lambda_2^{\ell_2}\cdots \lambda_n^{\ell_n}. 
\end{align*}
\end{lemma}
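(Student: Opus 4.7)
The plan is to reduce the lemma to a single convexity inequality via a Gaussian integral representation of the permanent. By \Cref{lemma:permanent}, $\sum_\ell S_\ell = \frac{n^n}{n!}\Perm(A)$, and by Wick's theorem for complex Gaussians applied to $Y\sim \calCN(0,A)$,
\begin{align*}
\Perm(A) = \bE\qth{\prod_{i=1}^n |Y_i|^2}.
\end{align*}
This identity recasts the permanent as an expectation of a symmetric function of $Y$, which is ideally suited to a pointwise convexity bound.

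Given this representation, I would apply AM--GM pointwise, $\prod_{i=1}^n |Y_i|^2 \le n^{-n}\|Y\|_2^{2n}$, take expectations, and evaluate the resulting Gaussian moment. Writing $Y = UW$ with independent $W_k\sim\calCN(0,\lambda_k)$ via the spectral decomposition $A = UDU^\top$, orthogonality of $U$ gives $\|Y\|_2^2 = \sum_k |W_k|^2 = \sum_k \lambda_k E_k$ with $E_k$ iid $\mathrm{Exp}(1)$. A multinomial expansion of $\pth{\sum_k \lambda_k E_k}^n$ together with $\bE[E_k^{\ell_k}] = \ell_k!$ then produces exactly $n!\cdot h_n(\lambda_1,\ldots,\lambda_n)$, where $h_n$ denotes the complete homogeneous symmetric polynomial of degree $n$. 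Combining these steps yields $\sum_\ell S_\ell \le h_n(\lambda_1,\ldots,\lambda_n)$.

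Finally, using $\lambda_1 = 1$ from \Cref{lemma:A-property}, I would stratify
\begin{align*}
h_n(1,\lambda_2,\ldots,\lambda_n) = \sum_{\ell_1=0}^n h_{n-\ell_1}(\lambda_2,\ldots,\lambda_n) = \sum_{\ell=0}^n \sum_{\substack{(\ell_2,\ldots,\ell_n)\in \naturals^{n-1}\\ \sum_{i\ge 2}\ell_i = \ell}}\lambda_2^{\ell_2}\cdots \lambda_n^{\ell_n},
\end{align*}
which matches the right-hand side of the lemma verbatim. The only step warranting genuine care is the Wick identity $\Perm(A) = \bE[\prod |Y_i|^2]$, which anchors the whole argument; it is classical and, as the discussion in \Cref{sec:identity} suggests, is closely tied to the Cauchy--Binet formula already appearing in the proof of \Cref{lemma:permanent}. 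Once it is in hand, the remaining ingredients (AM--GM, Gaussian moments, and the multinomial rearrangement) are entirely routine, and I do not anticipate a substantive obstacle.
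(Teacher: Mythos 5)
Your proposal is correct and is essentially the paper's own argument: the paper likewise writes $\frac{n!}{n^n}\sum_\ell S_\ell = \Perm(A) = \Perm(PP^\top) = \bE\bigl[\prod_{i=1}^n |(Pz)_i|^2\bigr]$ with $P = UD^{1/2}$ and $z\sim\calCN(0,I_n)$ (your $Y = Pz \sim \calCN(0,A)$), applies AM--GM pointwise, uses $\|Pz\|_2^2 = \sum_k \lambda_k|z_k|^2$ and the moments $\bE[|z_k|^{2\ell_k}]=\ell_k!$ (your exponential variables $E_k$), and concludes via the multinomial theorem and $\lambda_1=1$. The only cosmetic difference is that you invoke Wick's theorem for $\calCN(0,A)$ directly rather than through the paper's \Cref{lemma:wick-formula}.
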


Comparing this expression with \eqref{eq:polynomial-eigenvalue}, \Cref{lemma:UB-entire-sum} shows that replacing all coefficients in \eqref{eq:polynomial-eigenvalue} by $1$ gives an upper bound. Using the eigenvalue properties in \Cref{lemma:A-property}, this upper bound implies the upper bounds \eqref{eq:upper_bound_2} and \eqref{eq:upper_bound_3} of \Cref{thm:main}; see \Cref{append:proof_thm_main}. The proof of \Cref{lemma:UB-entire-sum} relies on the use of complex normal random variables. Recall that $z\sim \calCN(0,1)$ denotes $z=x+\icom y$ with independent $x,y\sim \calN(0,\frac{1}{2})$, and $z\sim \calCN(0,\Sigma)$ denotes a vector $z=x+\icom y$ with independent $x,y\sim \calN(0,\frac{\Sigma}{2})$ for \emph{real} PSD matrices $\Sigma$. It is known that the moments of $z\sim \calCN(0,1)$ are
\begin{align}\label{eq:complex-normal-moments}
\bE[z^m \bar{z}^n] = n!\mathbbm{1}_{m=n}. 
\end{align}
The following identity between matrix permanents and complex normal random vectors is observed in \cite[Lemma 2]{anari2017simply} and earlier in \cite{gurvits2003classical}.

\begin{lemma}\label{lemma:wick-formula}
For $P\in \bR^{m\times n}$ and $z\sim \calCN(0,I_n)$, the following identity holds:
\begin{align*}
\Perm(PP^\top) = \bE\qth{\prod_{i=1}^m |(Pz)_i|^2}. 
\end{align*}
\end{lemma}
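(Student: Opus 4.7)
The plan is to identify $w := Pz$ as a mean-zero complex Gaussian vector with covariance $PP^\top$, and then to evaluate $\bE \prod_{i=1}^m |w_i|^2$ by a direct Wick-type expansion based on the moment formula \eqref{eq:complex-normal-moments}. Since $P$ is real and $z\sim \calCN(0,I_n)$, each $w_i = \sum_{j=1}^n P_{ij} z_j$ satisfies $\bE[w_i\bar w_j] = (PP^\top)_{ij}$, so the claim asserts that a specific joint moment of a complex Gaussian vector equals the permanent of its covariance.

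The first concrete step is to expand
\begin{align*}
\prod_{i=1}^m |(Pz)_i|^2 = \sum_{j_1,\dots,j_m}\sum_{k_1,\dots,k_m} \left(\prod_{i=1}^m P_{ij_i}P_{ik_i}\right) \prod_{i=1}^m z_{j_i}\bar z_{k_i},
\end{align*}
then take the expectation inside and evaluate $\bE\prod_{i=1}^m z_{j_i}\bar z_{k_i}$. By independence of the coordinates of $z$ and \eqref{eq:complex-normal-moments}, this factor splits as $\prod_{\ell=1}^n a_\ell! \cdot \mathbbm{1}\{a_\ell=b_\ell\}$, where $a_\ell$ and $b_\ell$ denote the multiplicities of $\ell$ in the index sequences $(j_i)_{i\in[m]}$ and $(k_i)_{i\in[m]}$ respectively. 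A standard combinatorial identity then rewrites this quantity as $\sum_{\pi\in S_m}\prod_{i=1}^m\mathbbm{1}\{k_{\pi(i)}=j_i\}$, because the number of permutations aligning two sequences with a common multiset profile $(a_\ell)$ is exactly $\prod_\ell a_\ell!$.

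Substituting this identity back and swapping the outer sums, the expectation collapses to $\sum_{\pi\in S_m}\sum_{j_1,\dots,j_m} \prod_{i=1}^m P_{ij_i}P_{i,j_{\pi(i)}}$, where the Kronecker deltas have been used to eliminate the $\mathbf{k}$-sum. The final step is a change of variable: letting $k_i := j_{\pi(i)}$ and reindexing $i\mapsto\pi^{-1}(i)$ in one of the two $P$-factors rewrites each summand as $\prod_{i=1}^m P_{i,k_i}P_{\pi(i),k_i}$, so the inner sum over $\mathbf{k}$ factorizes as $\prod_{i=1}^m \sum_{k_i} P_{i,k_i}P_{\pi(i),k_i} = \prod_{i=1}^m (PP^\top)_{i,\pi(i)}$. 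Summing over $\pi$ then yields $\Perm(PP^\top)$, which is the claim.

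The analytic input is minimal: only the one-dimensional moment formula \eqref{eq:complex-normal-moments} is used, together with independence across coordinates of $z$. The main obstacle is purely combinatorial bookkeeping in the final rearrangement, where care is required to track how the two $P$-factors $P_{ij_i}$ and $P_{i,j_{\pi(i)}}$ collapse into the $(i,\pi(i))$-entry of $PP^\top$ after the substitution $k_i = j_{\pi(i)}$ and the permutation-induced relabeling.
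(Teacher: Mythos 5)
Your proof is correct. The paper's own argument is shorter but less self-contained: it expands $|(Pz)_i|^2 = \bigl(\sum_j P_{ij}z_j\bigr)\bigl(\sum_j P_{ij}\bar z_j\bigr)$ and then invokes Isserlis' (Wick's) theorem for complex Gaussians as a black box to write the expectation of the product as $\sum_{\pi\in S_m}\prod_i \bE\qth{(\sum_j P_{ij}z_j)(\sum_j P_{\pi(i)j}\bar z_j)} = \sum_{\pi}\prod_i (PP^\top)_{i\pi(i)}$. You instead re-derive exactly the Wick-type pairing identity needed here from first principles: expanding in coordinates, using independence and the scalar moment formula \eqref{eq:complex-normal-moments} to get $\prod_\ell a_\ell!\,\mathbbm{1}\{a_\ell=b_\ell\}$, and then recognizing this as the number of permutations aligning the two index sequences. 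That counting identity is correct (for each value $\ell$ one chooses a bijection between the $a_\ell$ positions carrying $\ell$ in $\mathbf{j}$ and the $b_\ell$ positions carrying $\ell$ in $\mathbf{k}$), and your final change of variables $k_i=j_{\pi(i)}$ with the reindexing $i\mapsto\pi^{-1}(i)$ correctly factorizes the inner sum into $\prod_i(PP^\top)_{i\pi(i)}$; the residual $\pi$ versus $\pi^{-1}$ ambiguity is harmless since you sum over all of $S_m$. The trade-off is the usual one: the paper's proof is two lines modulo a citation, while yours is fully elementary and in effect proves the special case of Isserlis' theorem it needs.
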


\Cref{lemma:wick-formula} follows from the classical Isserlis' theorem \cite{isserlis1918formula} (or Wick's formula) for products of joint normal random variables, and we include a proof in \Cref{append:wick_formula_proof} for completeness. Now we present the proof of \Cref{lemma:UB-entire-sum}. 

\begin{proof}[Proof of \Cref{lemma:UB-entire-sum}]
Let $A = UDU^\top$ be the matrix in \Cref{lemma:permanent}, and $P = UD^{1/2}$. Then
\begin{align*}
\frac{n!}{n^n}\sum_{\ell=0}^n S_\ell = \Perm(A) = \Perm(PP^\top) = \bE\qth{\prod_{i=1}^n |(Pz)_i|^2}
\end{align*}
by \Cref{lemma:wick-formula}. Since
\begin{align*}
\sum_{i=1}^n |(Pz)_i|^2 =  \|Pz\|_2^2 = z^\sfH P^\top P z = z^\sfH Dz = \sum_{i=1}^n \lambda_i |z_i|^2, 
\end{align*}
the AM-GM inequality then gives that
\begin{align*}
\bE\qth{\prod_{i=1}^n |(Pz)_i|^2} &\le \bE\qth{\pth{\frac{1}{n}\sum_{i=1}^n |(Pz)_i|^2 }^n} = \bE\qth{\pth{\frac{1}{n}\sum_{i=1}^n \lambda_i |z_i|^2 }^n} \\
& \stepa{=} \frac{1}{n^n}\sum_{\substack{(\ell_1,\ldots,\ell_n)\in \naturals^n \\ \sum_{i=1}^n \ell_i = n}} \binom{n}{\ell_1,\ldots,\ell_n}\lambda_1^{\ell_1}\cdots \lambda_n^{\ell_n}\bE\qth{|z_1|^{2\ell_1}\cdots |z_n|^{2\ell_n}} \\
& \stepb{=} \frac{n!}{n^n}\sum_{\substack{(\ell_1,\ldots,\ell_n)\in \naturals^n \\ \sum_{i=1}^n \ell_i = n}} \lambda_1^{\ell_1}\cdots \lambda_n^{\ell_n}, 
\end{align*}
where (a) is the multinomial theorem, and (b) follows from \eqref{eq:complex-normal-moments}. The proof is complete. 
\end{proof} 

\subsection{Proof of \Cref{thm:two_mixtures} via bounding the individual sum}\label{subsec:individual_sum}
In this section, we upper bound the individual sum $S_\ell$ in \eqref{eq:polynomial-eigenvalue} in order to prove \Cref{thm:two_mixtures}. The central result of this section is the following upper bound: 
\begin{lemma}\label{lemma:UB-individual-sum}
Using the notation of \eqref{eq:polynomial-eigenvalue}, for any $\lambda_2,\cdots,\lambda_n\ge 0$ and $\ell=0,\ldots,n$ it holds that
\begin{align*}
 S_{\ell} \le 3\sqrt{\ell+1}\cdot \sum_{\substack{(\ell_2,\ldots,\ell_n)\in \naturals^{n-1} \\ \sum_{i=2}^n \ell_i = \ell}} \lambda_2^{\ell_2}\cdots \lambda_n^{\ell_n}. 
\end{align*}
\end{lemma}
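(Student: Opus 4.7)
\textbf{Proof proposal for Lemma \ref{lemma:UB-individual-sum}.} The plan is to exploit the complex-Gaussian representation $\Perm(A) = \bE[\prod_i|(Pz)_i|^2]$ from Lemma~\ref{lemma:wick-formula} to isolate $S_\ell$ as a single expectation, and then bound it via the \emph{complex} form \eqref{eq:complex_case} of Theorem~\ref{thm:ESP}. With $P = UD^{1/2}$, $\lambda_1=1$, and $u_1 = {\bf 1}/\sqrt n$, I split $(Pz)_i = z_1/\sqrt n + W_i$ where $W_i := \sum_{j\ge 2}\sqrt{\lambda_j}\,u_{j,i}z_j$, and expand
\[
    \prod_{i=1}^n (Pz)_i \;=\; \sum_{k=0}^n \left(\frac{z_1}{\sqrt n}\right)^{n-k}\! e_k(W_1,\ldots,W_n).
\]
Taking expectation first in $z_1$ (independent of $z_2,\ldots,z_n$), the moment formula \eqref{eq:complex-normal-moments} kills all cross-terms, leaving
\[
    \Perm(A) \;=\; \sum_{k=0}^n \frac{(n-k)!}{n^{n-k}}\,\bE\bigl[|e_k(W)|^2\bigr].
\]
Since each $W_i$ is homogeneous of degree one in $\{\sqrt{\lambda_j}\}_{j\ge 2}$, the $k=\ell$ summand is precisely the degree-$\ell$ homogeneous piece of $\Perm(A)$; multiplying by $n^n/n!$ and writing $n^{\underline\ell}:=n(n-1)\cdots(n-\ell+1)$ yields the clean identity
\[
    S_\ell \;=\; \frac{n^\ell}{n^{\underline\ell}}\,\bE\bigl[|e_\ell(W_1,\ldots,W_n)|^2\bigr].
\]

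The heart of the argument is that the random complex vector $W\in\bC^n$ is automatically centered: $\sum_i W_i = \sum_{j\ge 2}\sqrt{\lambda_j}z_j\sum_i u_{j,i} = 0$, because each non-leading eigenvector $u_j$ is orthogonal to $u_1$. Thus, conditionally on $z$, Theorem~\ref{thm:ESP}(1) applies (after scaling to normalize the second moment), giving
\[
    |e_\ell(W)|^2 \;\le\; 3\sqrt{\ell+1}\binom{n}{\ell}\!\left(\frac{1}{n}\sum_{i=1}^n|W_i|^2\right)^{\!\ell}.
\]
By orthonormality of the $u_j$'s, $\sum_i|W_i|^2 = \sum_{j\ge 2}\lambda_j|z_j|^2$; a multinomial expansion combined with $\bE[|z_j|^{2m_j}]=m_j!$ evaluates the expectation of the $\ell$-th power as
\[
    \bE\!\left[\Bigl(\sum_{j\ge 2}\lambda_j|z_j|^2\Bigr)^{\!\ell}\right] \;=\; \ell!\sum_{\substack{(m_2,\ldots,m_n)\in\naturals^{n-1}\\ \sum_{j\ge 2}m_j=\ell}}\prod_{j\ge 2}\lambda_j^{m_j}.
\]
Plugging into the identity for $S_\ell$ and using the telescoping equality $\binom{n}{\ell}\ell!=n^{\underline\ell}$ to cancel the prefactor $n^\ell/n^{\underline\ell}$, one obtains exactly the claimed bound.

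The main technical obstacle is the third step: recognizing that the new Maclaurin-type bound applies \emph{pointwise in $z$} to the random vector $W$, which is precisely why the \emph{complex} case \eqref{eq:complex_case} of Theorem~\ref{thm:ESP} is essential---the tighter real version of~\eqref{eq:real_case} is unavailable because $W$ is genuinely $\bC$-valued. The sharp $\sqrt{\ell+1}$ dependence in the lemma is inherited directly from \eqref{eq:complex_case}, and the delicate cancellation of the combinatorial factor $\binom{n}{\ell}\ell!$ against $n^{\underline\ell}$ confirms that the complex ESP inequality has exactly the right strength to produce a uniform coefficient independent of both $n$ and the multi-index $(\ell_2,\ldots,\ell_n)$.
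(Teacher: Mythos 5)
Your proposal is correct and follows essentially the same route as the paper: the paper's Lemma~\ref{lemma:wick-formula-2} is exactly your identity $S_\ell = \frac{n^\ell (n-\ell)!}{n!}\,\bE[|e_\ell(W)|^2]$ with $W = \widetilde{P}z$, and the remainder of the argument (centeredness of $W$ from orthogonality to $u_1$, the complex bound \eqref{eq:complex_case} applied pointwise in $z$, $\sum_i|W_i|^2 = \sum_{j\ge 2}\lambda_j|z_j|^2$, and the multinomial/moment computation) matches the paper's proof step for step. The only cosmetic difference is that you derive the identity by expanding $\prod_i(Pz)_i$ around $z_1/\sqrt n$ and matching homogeneous degrees, whereas the paper obtains it from Isserlis' theorem together with the $R_\ell$/$S_\ell$/$T_\ell$ identities of \Cref{lemma:RST}; both derivations are valid.
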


Compared with \Cref{lemma:UB-entire-sum}, the result of \Cref{lemma:UB-individual-sum} has an additional $O(\sqrt{\ell})$ factor, due to the use of \Cref{thm:ESP} in the complex case. However, such an individual bound is crucial to deal with weighted sums $\sum_{\ell=0}^n w_\ell S_\ell$ in applications such as \Cref{thm:two_mixtures}, as witnessed by the next lemma. Recall the definitions of the neighboring mixtures $\bP_n$ and $\bP_n'$ in \Cref{thm:two_mixtures}. Similar to \eqref{eq:A-matrix} and \eqref{eq:polynomial-eigenvalue}, let the matrix $A\in \bR^{(n-1)\times (n-1)}$ be constructed based on $P_2,\ldots,P_n\in \calP$, and $S_\ell$ be the degree-$\ell$ homegeneous polynomial in its eigenvalues $(\lambda_2,\ldots,\lambda_{n-1})$. Applying a similar likelihood ratio computation to \Cref{lemma:permanent} leads to the following upper bound. 

\begin{lemma}\label{lemma:UB-empirical-bayes}
Under the above notations, the following inequality holds:
\begin{align*}
\TV(\bP_n, \bP_n')^2 \le \frac{1}{4}\int \frac{(\rmd \bP_n - \rmd \bP_n')^2}{\rmd \overline{P}^{\otimes n}} \le \frac{\Diam}{n}\sum_{\ell=1}^n \ell S_{\ell-1}. 
\end{align*}
\end{lemma}

We defer the proofs of \Cref{lemma:UB-empirical-bayes} and \Cref{thm:two_mixtures} to \Cref{append:proof_UB-empirical-bayes} and \Cref{append:proof_two_mixtures}, respectively, and present the proof of \Cref{lemma:UB-individual-sum} relying again on complex normal random variables. Recalling the notation $A = UDU^\top$ from \Cref{subsec:permanent}, let $\widetilde{U}\in \bR^{n\times (n-1)}$ be the matrix $U$ with its first column (which is $\frac{\bf 1}{\sqrt{n}}$ by \Cref{lemma:A-property}) removed, and $\widetilde{D} = \mathrm{diag}(\lambda_2,\ldots,\lambda_n)$ with the first eigenvalue $\lambda_1 = 1$ removed. The following lemma establishes an alternative expression of $S_\ell$. 

\begin{lemma}\label{lemma:wick-formula-2}
Let $\widetilde{P} = \widetilde{U}\widetilde{D}^{1/2}\in \bR^{n\times (n-1)}$ and $z\sim \calCN(0,I_{n-1})$. Then for $\ell=0,\cdots,n$, 
\begin{align*}
S_{\ell} = n^\ell \frac{(n-\ell)! }{n!} \cdot \bE\qth{|e_\ell(\widetilde{P}z)|^2}, 
\end{align*}
where as usual $e_\ell$ denotes the elementary symmetric polynomial of order $\ell$. 
\end{lemma}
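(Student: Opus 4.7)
The plan is to expand $\bE[|e_\ell(\widetilde{P}z)|^2]$ directly using the columns of $U$ and the moments of complex Gaussians, and then to compare term-by-term with the definition of $S_\ell$ in \eqref{eq:polynomial-eigenvalue}. The main new ingredient compared to \Cref{lemma:wick-formula} is that the permanents appearing in $S_\ell$ involve $n \times n$ matrices $U_{n-\ell, \ell_2, \ldots, \ell_n}$ in which the first column $u_1 = {\bf 1}/\sqrt{n}$ is ``padded'' $n-\ell$ times; the prefactor $n^\ell(n-\ell)!/n!$ in the target identity should come from accounting for this padding.

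First I would introduce $y := \widetilde{P}z$ and $w_k := \sqrt{\lambda_k}\, z_{k-1}$ for $k = 2, \ldots, n$, so that $y_i = \sum_{k=2}^n U_{ik} w_k$ with independent $w_k \sim \calCN(0, \lambda_k)$. Expanding $e_\ell(y)$ monomially gives
\[
e_\ell(y) = \sum_{|S| = \ell}\, \sum_{(k_i)_{i \in S} \in \{2, \ldots, n\}^S} \Bigl(\prod_{i \in S} U_{i, k_i}\Bigr) \prod_{i \in S} w_{k_i}.
\]
Next I would square and take expectation: using $\bE[w_k^a \bar{w}_k^b] = \delta_{ab}\, a!\, \lambda_k^a$ (a rescaled version of \eqref{eq:complex-normal-moments}) together with independence across $k$, only pairs of configurations sharing a common multiplicity profile $(m_2, \ldots, m_n) \in \naturals^{n-1}$ with $\sum_k m_k = \ell$ should contribute. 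This should yield
\[
\bE[|e_\ell(y)|^2] = \sum_{\substack{(m_k) \in \naturals^{n-1} \\ \sum_k m_k = \ell}} \bigl(m_2! \cdots m_n!\bigr)\, \lambda_2^{m_2} \cdots \lambda_n^{m_n}\, T_{(m_k)}^2,
\]
where $T_{(m_k)}$ denotes the sum of $\prod_{i \in S} U_{i, k_i}$ over all size-$\ell$ subsets $S \subseteq [n]$ and all assignments $(k_i) \colon S \to \{2, \ldots, n\}$ with multiplicity profile $(m_k)$.

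The remaining step is to identify $T_{(m_k)}$ with a matrix permanent. I would organize the sum defining $\Perm(U_{n-\ell, m_2, \ldots, m_n})$ according to which rows are matched to each block of identical columns; using that $U_{i, 1} = 1/\sqrt{n}$ for all $i$, I expect the combinatorial identity
\[
\Perm(U_{n-\ell, m_2, \ldots, m_n}) = (n-\ell)!\, m_2! \cdots m_n! \cdot n^{-(n-\ell)/2} \cdot T_{(m_k)}.
\]
Substituting this back and rearranging the powers of $n$ and the factorials should then yield the target identity $S_\ell = \tfrac{n^\ell (n-\ell)!}{n!}\, \bE[|e_\ell(\widetilde{P}z)|^2]$ upon comparison with \eqref{eq:polynomial-eigenvalue}.

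The main obstacle I anticipate is precisely this last combinatorial identification: one must set up carefully a bijection between label-assignments $(k_i)$ on a size-$\ell$ subset $S \subseteq [n]$ and ordered row-partitions $(T_1, T_2, \ldots, T_n)$ of $[n]$ with block sizes $(n-\ell, m_2, \ldots, m_n)$, and correctly track the factors $(n-\ell)!$ and $\prod_k m_k!$ coming from the freedom to permute identical columns within each block. Once this is handled cleanly, the rest of the proof is routine algebra.
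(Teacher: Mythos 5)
Your proposal is correct, and it takes a genuinely different route from the paper. The paper applies Isserlis' theorem directly to the \emph{correlated} vector $v=\widetilde{P}z\sim\calCN(0,\overline{A})$, obtaining $\bE[|e_\ell(v)|^2]=\sum_{S,S'}\Perm(\overline{A}_{S,S'})=T_\ell$ as in \eqref{eq:T_ell}, and then invokes the identity $S_\ell=\frac{(n-\ell)!}{n!}n^\ell T_\ell$ from \Cref{lemma:RST}, which is itself proved via the generalized Cauchy--Binet formula and a generating-function-in-$t$ comparison of two expansions of $\Perm(UD_tU^\top)$. You instead work in the \emph{independent} coordinates $w_k=\sqrt{\lambda_k}z_{k-1}$, expand $e_\ell(\widetilde{P}z)$ monomially, use the orthogonality $\bE[w_k^a\bar w_k^b]=\delta_{ab}\,a!\,\lambda_k^a$ to isolate the multiplicity profiles, and then expand $\Perm(U_{n-\ell,m_2,\ldots,m_n})$ combinatorially, exploiting that the padded column is constantly $1/\sqrt{n}$. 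The ``obstacle'' you flag is in fact fine: summing $\sigma\in S_n$ by the induced ordered partition of the rows into blocks of sizes $(n-\ell,m_2,\ldots,m_n)$ does produce exactly the factor $(n-\ell)!\,m_2!\cdots m_n!\cdot n^{-(n-\ell)/2}$ times your $T_{(m_k)}$, and substituting into \eqref{eq:polynomial-eigenvalue} gives $S_\ell=\frac{n^\ell(n-\ell)!}{n!}\sum_{(m_k)}m_2!\cdots m_n!\,T_{(m_k)}^2\lambda_2^{m_2}\cdots\lambda_n^{m_n}$, matching your second-moment computation term by term. What the two approaches buy: yours is self-contained and bypasses both \Cref{lemma:RST} and the Cauchy--Binet machinery entirely, at the cost of heavier bookkeeping; the paper's factors the work through $T_\ell$ and Isserlis' theorem, which is shorter here and pays off because $T_\ell$ and \Cref{lemma:RST} are reused elsewhere (in the proofs of \Cref{thm:deFinetti} and \Cref{lemma:UB-empirical-bayes}).
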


The proof of \Cref{lemma:wick-formula-2} again follows from the Isserlis' theorem and several permutation identities in \Cref{lemma:RST}, and is deferred to \Cref{append:wick-formula-2-proof}. Now we present the proof of \Cref{lemma:UB-individual-sum}. 
\begin{proof}[Proof of \Cref{lemma:UB-individual-sum}]
Since the matrix $U$ is orthogonal with the first column proportional to ${\bf 1}$, all columns of $\widetilde{U}$ are orthogonal to ${\bf 1}$. Consequently, the vector $\widetilde{P}z=\widetilde{U}\widetilde{D}^{1/2}z$ sums to zero, so \eqref{eq:complex_case} in \Cref{thm:ESP} gives
\begin{align*}
|e_{\ell}(\widetilde{P}z)|^2 \le 3\sqrt{\ell+1}\binom{n}{\ell}\pth{\frac{1}{n}\sum_{i=1}^n \left|(\widetilde{P}z)_i\right|^2}^{\ell}.
\end{align*}
Since
\begin{align*}
\sum_{i=1}^n \left|(\widetilde{P}z)_i\right|^2 = z^\sfH \widetilde{P}^\top \widetilde{P} z = z^\sfH \widetilde{D} z = \sum_{i=2}^{n} \lambda_{i}|z_{i-1}|^2, 
\end{align*}
by \Cref{lemma:wick-formula-2} and \eqref{eq:complex-normal-moments} we get
\begin{align*}
S_{\ell} &\le n^{\ell}\frac{(n-\ell)!}{n!}\cdot 3\sqrt{\ell+1}\binom{n}{\ell}\bE\qth{\pth{\frac{1}{n}\sum_{i=2}^{n} \lambda_i |z_{i-1}|^2}^{\ell}} \\
&= \frac{3\sqrt{\ell+1}}{\ell!} \sum_{\substack{(\ell_2,\ldots,\ell_n)\in \naturals^{n-1} \\ \sum_{i=2}^n \ell_i = \ell}} \binom{\ell}{\ell_2,\ldots,\ell_n}\lambda_2^{\ell_2}\cdots \lambda_n^{\ell_n} \bE\qth{|z_1|^{2\ell_2}\cdots |z_{n-1}|^{2\ell_{n}}} \\
&= 3\sqrt{\ell+1}\sum_{\substack{(\ell_2,\ldots,\ell_n)\in \naturals^{n-1} \\ \sum_{i=2}^n \ell_i = \ell}} \lambda_2^{\ell_2}\cdots \lambda_n^{\ell_n}, 
\end{align*}
which is the claimed upper bound. 
\end{proof}


\section{Discussion}\label{sec:discussion}

\subsection{Tightness of upper bounds}\label{subsec:tightness}

In this section we discuss the tightness of the upper bounds in \Cref{thm:main}. Recall that the upper bound \eqref{eq:upper_bound_1} is quadratic in $\Capa$ when $\Capa$ is small, and the upper bound \eqref{eq:upper_bound_2} or \eqref{eq:upper_bound_3} is a power function of $\AffH$ when $\Capa$ is large. The next lemma shows that this is essentially the right behavior of $\chi^2(\bP_n \| \bQ_n)$ for every family $\calP$. 

\begin{lemma}\label{lemma:tightness}
In the setting of \Cref{thm:main}, the following lower bound holds:
\begin{align*}
\sup_{n}\sup_{P_1,\ldots,P_n\in \calP} \chi^2(\bP_n \| \bQ_n) \ge \sup_{n}\sup_{P_1,\ldots,P_n\in \calP} \frac{1}{\sqrt{1-\lambda_2(A)^2}} - 1, 
\end{align*}
where $A= A(P_1,\dots,P_n)$ is defined in \eqref{eq:A-matrix}, and $\lambda_2(A)$ denotes the second largest eigenvalue of $A$. In particular, it holds that
\begin{align*}
\sup_{n}\sup_{P_1,\ldots,P_n\in \calP} \chi^2(\bP_n \| \bQ_n) \ge \max\sth{ \frac{\DiamH^2}{8}, \frac{\AffH^{1/4}}{\sqrt{2}}-1 }. 
\end{align*}
\end{lemma}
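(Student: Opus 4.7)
The plan is to combine the permanent identity of \Cref{lemma:permanent} with a replication construction that exposes a limit depending only on $\lambda_2(A)$. Starting from \eqref{eq:polynomial-eigenvalue} applied to $A = UDU^\top$, nonnegativity of every summand allows us to retain only the ``pure-$\lambda_2$'' contributions (those with $\ell_3 = \cdots = \ell_n = 0$). A short combinatorial count, grouping the $\ell!(n-\ell)!$ permutations that send a fixed $\ell$-subset of rows to the $\lambda_2$-columns, yields
\[
  \Perm(U_{n-\ell,\ell,0,\ldots,0}) \;=\; \frac{(n-\ell)!\,\ell!}{n^{(n-\ell)/2}}\, e_\ell(u_2),
\]
where $u_2$ denotes the second eigenvector of $A$. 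Substituting gives the key lower bound
\[
  \chi^2(\bP_n \| \bQ_n) + 1 \;\geq\; \sum_{\ell=0}^n \frac{n^\ell\, e_\ell(u_2)^2}{\binom{n}{\ell}}\, \lambda_2^\ell.
\]

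Next, the dependence on the particular eigenvector $u_2$ is eliminated by replicating: set $P'_{(a,\alpha)} = P_a$ for $(a,\alpha) \in [n] \times [k]$. The new $(nk) \times (nk)$ matrix $A' = k^{-1} A \otimes J_k$ has exactly the same nonzero spectrum as $A$, and its second eigenvector is $u'_2 = u_2 \otimes \mathbf{1}_k/\sqrt{k}$. The generating function factors cleanly,
\[
  \sum_\ell e_\ell(u'_2)\, t^\ell \;=\; \prod_{a=1}^n \Bigl(1 + \tfrac{t\, u_2(a)}{\sqrt{k}}\Bigr)^{\!k},
\]
and a CLT-style expansion, using $\sum_a u_2(a) = 0$ and $\sum_a u_2(a)^2 = 1$, shows this converges coefficientwise to $e^{-t^2/2}$, so $e_\ell(u'_2)^2 \to 4^{-\ell/2}[(\ell/2)!]^{-2}$ for even $\ell$ and to $0$ otherwise. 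Combined with $(nk)^\ell / \binom{nk}{\ell} \to \ell!$, the $\ell = 2m$ term converges to $\binom{2m}{m}(\lambda_2^2/4)^m$. Truncating the bound at fixed $L$, letting $k \to \infty$, and then $L \to \infty$ (Fatou/monotone convergence) gives
\[
  \sup_{n'}\chi^2(\bP_{n'} \| \bQ_{n'}) \;\geq\; \liminf_{k\to\infty}\chi^2(\bP_{nk} \| \bQ_{nk}) \;\geq\; \sum_{m\geq 0} \binom{2m}{m}\frac{\lambda_2^{2m}}{4^m} - 1 \;=\; \frac{1}{\sqrt{1-\lambda_2^2}} - 1,
\]
and taking the supremum over $(n, P_1, \ldots, P_n)$ yields the first claimed inequality.

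For the two explicit bounds, instantiate with $n = 2$, $P_1 = P$, $P_2 = Q$ for arbitrary $P, Q \in \calP$. A direct computation (using $\overline P = (P+Q)/2$ and $\rho_P + \rho_Q = 2$) gives $\lambda_2(A) = \chi^2(P \| \overline P) = \LC(P,Q)$, and the pointwise inequality $(\sqrt{\rmd P} + \sqrt{\rmd Q})^2 \geq \rmd P + \rmd Q$ implies $\LC(P,Q) \geq H^2(P,Q)/2$. The first bound $\DiamH^2/8$ now follows from the elementary estimate $(1 - x^2)^{-1/2} - 1 \geq x^2/2$ on $[0,1)$, giving $\chi^2 \geq \lambda_2^2/2 \geq H^4(P,Q)/8$, and supping over $(P,Q)$. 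For the second, note $\AffH^{1/4}/\sqrt{2} = (2 - \DiamH)^{-1/2}$, so it suffices to check that $1 - \DiamH^4/4 \leq 2 - \DiamH$; rearranged this is $\DiamH^4 - 4\DiamH + 4 \geq 0$ on $[0,2]$, which holds because $x^4 - 4x + 4$ attains its minimum value of $1$ at $x = 1$.

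The main technical hurdle will be the coefficientwise CLT convergence together with the truncation-plus-Fatou step, since the outer summation range itself depends on $k$; everything else reduces to elementary identities, standard pointwise inequalities between $f$-divergences, or one-variable calculus.
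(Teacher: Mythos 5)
Your argument is correct, and it reaches the same limiting value $\frac{1}{\sqrt{1-\lambda_2(A)^2}}-1$ as the paper by a genuinely different mechanism. The paper also replicates each $P_i$ ($m$ copies) and lets $m\to\infty$, but it extracts the limit on the \emph{data} side: it feeds the linear statistic $\frac{1}{\sqrt{mn}}\sum_i f(X_i)$ through the data-processing inequality, applies the CLT under $\bP_{mn}$ and $\bQ_{mn}$, invokes lower semicontinuity of $\chi^2$ under weak convergence, and evaluates $\chi^2$ between two centered Gaussians with mismatched variances; the choice $f=\sum_j u_j\,\rmd P_j/\rmd\overline P$ then produces the variance ratio $1-\lambda_2(A)$. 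You instead stay entirely inside the algebraic framework of \Cref{sec:permanents}: you drop the nonnegative non-$\lambda_2$ terms of \eqref{eq:polynomial-eigenvalue}, compute $\Perm(U_{n-\ell,\ell,0,\dots,0})$ exactly in terms of $e_\ell(u_2)$, and take the replication limit of the generating function $\prod_a(1+t u_2(a)/\sqrt{k})^k\to e^{-t^2/2}$, so the ``CLT'' happens on the eigenvector coefficients rather than on the observations. Your route avoids the appeal to lower semicontinuity of $\chi^2$ (replaced by the truncation-at-$L$/Fatou step, which you handle correctly since each summand is nonnegative), and it makes visible exactly which terms of the permanent expansion survive in the limit — in principle one could retain the other eigenvalues for a sharper bound, something the paper's scalar test statistic cannot see. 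The paper's route is softer and shorter, requiring only one-dimensional computations. Both proofs of the two explicit corollaries proceed identically via $n=2$ and $\lambda_2=\LC(P,Q)\ge H^2(P,Q)/2$.

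One small slip: in the last display of your second part you write $1-\DiamH^4/4\le 2-\DiamH$, but from $\lambda_2\ge \DiamH/2$ you only get $\lambda_2^2\ge \DiamH^2/4$, so the inequality you actually need is $1-\DiamH^2/4\le 2-\DiamH$, i.e.\ $(\DiamH-2)^2\ge 0$ — a quadratic, not the quartic $x^4-4x+4$. Since the quadratic version is trivially true, nothing breaks, but the exponent should be corrected.
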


Combined with \Cref{thm:main}, \Cref{lemma:tightness} shows that when the capacity of the family $\calP$ is small, we have $\DiamH^2 \lesssim \sup_{n,P_1,\dots,P_n}\chi^2(\bP_n \| \bQ_n)\lesssim \Capa^2$, which is often tight as $\Capa\le \Diam \approx \DiamH$ in view of the fact that most $f$-divergences are locally $\chi^2$-like \cite[Chapter 7.10]{polyanskiy2024information}. At the other extreme, when the family $\calP$ is rich, we have $\AffH^{\Omega(1)}-1\le \sup_{n,P_1,\dots,P_n}\chi^2(\bP_n \| \bQ_n) \le (e\AffH)^{\Capa}-1$. Therefore, up to a gap in the exponent, the maximum $H^2$ singularity $\AffH$ plays a central role when it is large. The proof of \Cref{lemma:tightness} computes the variance of a linear test function $\frac{1}{n}\sum_{i=1}^n f(X_i)$ under $\bP_n$ and $\bQ_n$. 

The next natural question is on the tightness of the exponent $\Capa$ in the upper bound \eqref{eq:upper_bound_2}. Using the permanent representation of the $\chi^2$ divergence in \Cref{lemma:permanent}, we can equivalently ask whether the following bound in \Cref{sec:permanents} is tight: 
\begin{align}\label{eq:permanent-upper-bound}
\Perm(A) \le \frac{n!}{n^n}\pth{\frac{1}{\Delta}}^{1+\sfC},
\end{align}
where $A\in \mathbb{R}^{n\times n}$ is PSD and doubly stochastic, with $\trace(A)\le 1+\sfC$ and $1-\lambda_2(A)\ge \Delta$. Our next lemma shows that the permanent upper bound \eqref{eq:permanent-upper-bound} is essentially tight if we only make use of the trace and spectral gap of $A$. 

\begin{lemma}\label{lemma:tightness-perm}
There exist absolute constants $r,r',\sfC_0>0$ and $\Delta_0 \in (0,1)$ such that for any $\sfC \ge \sfC_0$ and $\Delta \le \Delta_0$, one may find some $n\in \mathbb{N}$ and $A\in \mathbb{R}^{n\times n}$ satisfying: 1) $A$ is PSD and doubly stochastic; 2) $\trace(A)\le 1+\sfC$; 3) $1-\lambda_2(A)\ge \Delta$; and 4)
\begin{align*}
\Perm(A) \ge \frac{n!}{n^n} \pth{\frac{1}{\Delta}}^{r\sfC}. 
\end{align*}
Similarly, one may also find a family $\calP$ of distributions with $\Capa \le \sfC$, $\AffH \le \Delta^{-1}$, and
\begin{align*}
\sup_{n}\sup_{P_1,\dots,P_n\in \calP} \chi^2(\bP_n \| \bQ_n) \ge \pth{\frac{1}{\Delta}}^{r' \sfC} - 1. 
\end{align*}
\end{lemma}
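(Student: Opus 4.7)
The plan is to exhibit, for all small $\Delta$ and large $\sfC$, an explicit PSD doubly stochastic matrix $A$ (and a corresponding distribution family) that saturates the upper bound up to a constant in the exponent. With $m = \lceil \sfC \rceil$ and $k = \lceil 1/\eta \rceil$, where $\eta = \Delta$ for the matrix statement and $\eta = \sqrt{\Delta}$ for the statistical one, set $n = km$ and let $B\in\bR^{n\times n}$ be the block-diagonal matrix whose $m$ diagonal blocks are each the $k\times k$ all-ones matrix $J_k$. The central object is
\begin{align*}
    A = \frac{1-\eta}{k}\, B + \frac{\eta}{n}\, J_n.
\end{align*}
The four structural properties follow by inspection: $A$ is nonnegative with every row summing to $(1-\eta)+\eta = 1$; it is PSD as a nonnegative combination of the PSD matrices $B$ and $J_n$; and a direct block-by-block diagonalization (using that $\mathbf{1}_n$ is a common eigenvector, that zero-sum combinations of the $m$ block-indicator vectors are annihilated by $J_n$ and scaled by $k$ under $B$, and that within-block zero-sum vectors are annihilated by both) yields the spectrum $\{1;\ (1-\eta)^{m-1};\ 0^{n-m}\}$. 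Hence $\lambda_2(A) = 1-\eta$ (spectral gap exactly $\eta$) and $\trace(A) = 1 + (m-1)(1-\eta) \le 1+\sfC$.

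The main step is to lower bound $\Perm(A)$. Since $\tfrac{\eta}{n}J_n$ has nonnegative entries and the permanent of a nonnegative matrix is monotone in each entry, dropping that term gives
\begin{align*}
    \Perm(A) \ge \Perm\pth{\frac{1-\eta}{k}B} = \frac{(1-\eta)^n}{k^n}\, \Perm(B) = (1-\eta)^n\pth{\frac{k!}{k^k}}^m,
\end{align*}
using $n=km$, the factorization of the permanent over block-diagonal matrices, and $\Perm(J_k)=k!$. Applying Stirling to $n!/n^n$ and $k!/k^k$ yields
\begin{align*}
    \log\pth{\frac{n^n}{n!}\Perm(A)} \ge \frac{m}{2}\log(2\pi k) - n\eta(1+\eta) - \tfrac{1}{2}\log(2\pi n) - O(1).
\end{align*}
With $k = \lceil 1/\eta\rceil$ one has $k\eta \le 2$, so $n\eta \le 2m$, while $\tfrac{m}{2}\log(2\pi k)$ contributes $\tfrac{m}{2}\log(1/\eta) + O(m)$. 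For $\sfC \ge \sfC_0$ and $\Delta\le \Delta_0$ this term dominates, and choosing $\eta = \Delta$ proves $\Perm(A)\ge \tfrac{n!}{n^n}\Delta^{-r\sfC}$ for (e.g.) $r=1/3$; this establishes the first half.

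For the statistical half I would realize $A$ via an explicit family. Take a probability space containing $m$ distinct atoms $x_1,\dots,x_m$ together with an auxiliary distribution $\mu$ mutually singular from them, and set $\widetilde Q_c = (1-\eta)\delta_{x_c} + \eta\mu$ for $c \in [m]$. Let $\calP = \{\widetilde Q_1,\dots,\widetilde Q_m\}$ and $P_i = \widetilde Q_{\lceil i/k \rceil}$, so each $\widetilde Q_c$ appears $k$ times among the $P_i$'s. Splitting integrals into the atomic support and the $\mu$-support, one computes $H^2(\widetilde Q_c,\widetilde Q_{c'}) = 2(1-\eta)$ for $c\neq c'$, giving $\AffH = \eta^{-2}$, which equals $\Delta^{-1}$ for $\eta = \sqrt\Delta$; and, by symmetry of $\calP$, the uniform prior attains the $\chi^2$ capacity, giving $\Capa = (m-1)(1-\eta) \le \sfC$. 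Substituting into \eqref{eq:A-matrix} reproduces precisely the matrix from the previous paragraph (with $\eta = \sqrt\Delta$), so \Cref{lemma:permanent} gives $\chi^2(\bP_n\|\bQ_n) + 1 = \tfrac{n^n}{n!}\Perm(A)$, and rerunning the Stirling estimate with $\eta=\sqrt\Delta$ and $k = \lceil 1/\sqrt\Delta\rceil$ delivers $\chi^2(\bP_n\|\bQ_n) \ge \Delta^{-r'\sfC} - 1$ with $r' = 1/5$ (say).

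The main technical obstacle is the balancing in the permanent estimate: the factor $(1-\eta)^n$ pushes toward small $n$ (hence small $k$), while $(k!/k^k)^m$ benefits from large $k$. The optimum $k \asymp 1/\eta$ makes both of these quantities comparable to $e^{-\Theta(m)}$, and the $\Delta^{-\Theta(\sfC)}$ growth emerges entirely from the residual $\tfrac{m}{2}\log(2\pi k) = \Theta(m\log(1/\eta))$ term in Stirling's formula. The factor-two loss between $r \approx 1/2$ (matrix-only) and $r' \approx 1/4$ (statistical) is unavoidable in this construction: the constraint $\AffH \le 1/\Delta$ only allows $\eta \ge \sqrt{\Delta}$ for a family whose elements are not mutually singular, and this halves $\log(1/\eta)$ as a function of $\log(1/\Delta)$.
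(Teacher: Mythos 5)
Your proposal is correct and follows essentially the same route as the paper: the matrix $A = \frac{1-\eta}{k}B + \frac{\eta}{n}J_n$ is exactly the paper's Kronecker product $\pth{(1-\eta)I_m + \frac{\eta}{m}J_m}\otimes \frac{J_k}{k}$, the permanent is lower bounded by the same diagonal-block restriction followed by Stirling, and the distribution family $(1-\eta)\delta_{x_c}+\eta\mu$ with $\eta=\sqrt{\Delta}$ matches the paper's $\sqrt{\Delta}\,\delta_0+(1-\sqrt{\Delta})\delta_i$ construction in all relevant respects. The only cosmetic difference is the block size ($k\approx 1/\eta$ versus the paper's $\approx 1/(2\eta)$) and that you compute $\Capa$ exactly rather than invoking the subadditivity bound $\Capa\le m-1$; both are fine.
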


By \Cref{lemma:tightness-perm}, the exponent $\sfC$ in both the permanent upper bound \eqref{eq:permanent-upper-bound} and the $\chi^2$ divergence upper bound \eqref{eq:upper_bound_2} in \Cref{thm:main} is tight in the worst case, up to multiplicative constants. However, as opposed to \Cref{lemma:tightness}, the statement of \Cref{lemma:tightness-perm} is not pointwise in $\calP$; in other words, this does \emph{not} mean that the upper bounds of \Cref{thm:main} are always tight for a specific class $\calP$. For example, for the Gaussian family (cf. point 1 of \Cref{cor:specific_families}) with $\mu > 1$, \Cref{thm:main} and \Cref{lemma:tightness} only imply that $\exp(\Omega(\mu^2)) \le \sup_{n,P_1,\dots,P_n} \chi^2(\bP_n \| \bQ_n) \le \exp(O(\mu^3))$, still exhibiting a gap on the exponent. An intuitive reason is that, for specific $\calP$, further properties of the matrix $A$ (besides the trace and spectral gap) could potentially be exploited to lead to better results.


\subsection{The permutation channel}
We also discuss the implications of our results on the capacity of the \emph{noisy permutation channel} in information theory. Motivated by mobile networks and DNA coding systems, the noisy permutation channel \cite{makur2020coding} applies a uniformly random permutation to the outputs of a usual communication channel. Mathematically, let $\calP = (\sfK_x)_{x\in \calX}$ be a class of conditional distributions $\sfK_x := P_{Z|X=x}$ (i.e., a channel), and $(Z_1,\dots,Z_n)$ be the channel output of an input sequence $(X_1,\dots,X_n)$. The final output $(Y_1,\dots,Y_n)$ of the permutation channel is a uniformly random permutation of $(Z_1,\dots,Z_n)$, i.e., $Y_i = Z_{\pi(i)}$ for $\pi\sim \Unif(S_n)$. To understand and design statistically optimal coding schemes for the permutation channel, a key task is to determine the channel capacity
\begin{align}\label{eq:channel_capacity}
    C_n = \max_{p(x^n)} I(X^n; Y^n), 
\end{align}
where the maximum is over all possible distributions of $X^n$. Using our result that the distribution $P_{Y^n|X^n}$ is approximately $\pth{\frac{1}{n}\sum_{i=1}^n \sfK_{X_i}}^{\otimes n}$, we obtain the following bounds of $C_n$. 

\begin{lemma}\label{lemma:permutation_channel}
Let $\delta(\calP):= \Capa(1+\AffH)$, the following bounds hold for $C_n$ in \eqref{eq:channel_capacity}: 
\begin{align*}
 \sup_{\varepsilon>0} \min\sth{ \frac{n\varepsilon^2}{4}, \log M_{\mathrm{H}}(\calP_n,\varepsilon)} - \frac{\delta(\calP)}{2} - \log 2 \le C_n \le \inf_{\varepsilon>0} \pth{n\varepsilon^2 + \log N_{\mathrm{KL}}(\calP_n, \varepsilon) } + \delta(\calP). 
\end{align*}
Here $\calP_n := \frac{1}{n}(\calP+\dots+\calP)$ with the set addition $A+B:=\{a+b:a\in A, b\in B\}$, and 
\begin{align*}
N_{\mathrm{KL}}(\calP, \varepsilon) &:= \min\sth{m: \inf_{\calP_0\subseteq \calP: |\calP_0|=m} \sup_{P\in \calP} \min_{P_0\in \calP_0} \KL(P\|P_0)\le \varepsilon^2 }, \\
M_{\mathrm{H}}(\calP, \varepsilon) &:= \max\sth{m: \sup_{\calP_0\subseteq \calP: |\calP_0|=m} \min_{P,P'\in \calP_0: P\neq P'} H^2(P, P')\ge \varepsilon^2 }
\end{align*}
are the covering and packing numbers under the KL divergence and Hellinger distance, respectively. 
\end{lemma}

In the above result, the quantity $\inf_{\varepsilon>0} \pth{n\varepsilon^2 + \log N_{\mathrm{KL}}(\mathrm{conv}(\calP), \varepsilon) }$ is an entropic upper bound on the \emph{minimax redundancy} of $\calP_n^{\otimes n}$ \cite{haussler1997mutual,yang1999information}, and the quantity $\sup_{\varepsilon>0} \min\{n\varepsilon^2/4, \log M_{\mathrm{H}}(\calP_n,\varepsilon)\}$ is an entropic lower bound for redundancy \cite{haussler1997mutual}. Therefore, \Cref{lemma:permutation_channel} shows that an i.i.d. approximation of $P_{Y^n|X^n}$ is quantitatively accurate up to an $\calO_\calP(1)$ additive factor. When $\calP$ consists of $d$ linearly independent discrete distributions with strictly positive pmfs, both entropic bounds scale as $\frac{d-1}{2}\log n$, so \Cref{lemma:permutation_channel} recovers the capacity bound $C_n = \frac{d-1}{2}\log n + O(1)$ in \cite{makur2020coding,tang2023capacity}. Moreover, unlike the method-of-types technique in \cite{tang2023capacity} which requires a finite class $\calP$ of discrete distributions, \Cref{lemma:permutation_channel} gives meaningful results even if $|\calP|=\infty$ or $\calP$ is a family of continuous distributions.

\subsection{General basis expansions}\label{subsec:basis_expansion}
As mentioned in Section~\ref{sec:basis}, in the Gaussian case where $P_i = \calN(\theta_i, 1)$, the methods of moments and cumulants described in Section~\ref{sec:failure} rely on the expansion $\varphi(x- \theta)/\varphi(x) = 1 + \sum_{k \geq 1} h_k(x) \frac{\theta^k}{k!}$, where $\varphi$ is the density of $\calN(0, 1)$ and $h_k$ is the $k$th Hermite polynomial.
By contrast, the doubly centered expansion is based on the representation $\varphi(x - \theta)/\varphi_0(x) = 1 + \Psi(x, \theta)$, where $\varphi_0(x) = \frac 1n \sum_{i=1}^n \varphi(x - \theta_i)$ denotes the marginal density of each coordinate under $\bP_n$ and $\bQ_n$.
This representation possesses the important property that the functions $(x_1, \dots, x_n) \mapsto \prod_{i \in S} \Psi(x_i, \theta_i)$ are orthogonal with respect to $L^2(\bQ_n)$ and $\frac 1n \sum_{i=1}^n \Psi(x, \theta_i) = 0$.

In principle, this expansion could be developed further as in the Hermite case by writing 
\begin{equation}\label{eq:full_expansion}
	\frac{\varphi(x - \theta)}{\varphi_0(x)} = 1 + \sum_{k \geq 1} \psi_k(x) g_k(\theta)
\end{equation}
for some functions $\psi_k$ which are orthogonal in $L^2(\varphi_0)$ and some suitable coefficient functions $g_k$.
Such an expansion would still possess the doubly centered property described above: indeed, averaging both sides of~\eqref{eq:full_expansion} over the mixing measure shows that in such an expansion, $\frac 1n \sum_{i=1}^n g_k(\theta_i) = 0$ always holds.
However, this development seems to offer no benefits in the setting of this work.
The intuitive explanation of this fact is that $\bP_n$ and $\bQ_n$ differ starting from the second moment, so the specific choices of the higher-order basis functions become unimportant (where every choice works).

Nevertheless, we conjecture that~\eqref{eq:full_expansion} may be useful in more general scenarios.
For example, if $\bP_n = \nu_\bP \star \calN(0, I_n)$ for a model in which $\bE_{\vartheta \sim \bP} \qth{\prod_{i=1}^n g_{\alpha_i}(\vartheta_i)} =0$ for all multi-indices $\alpha$ satisfying $|\alpha| \leq K$, then we anticipate that a full basis expansion as in~\eqref{eq:full_expansion} can be used to take advantage of additional cancellations if it is possible to give good bounds on $g_k(\theta)$ for large $k$. 

\paragraph{Acknowledgments.} We are grateful to Terence Tao for suggesting the use of the saddle point method for proving \Cref{thm:ESP}, to Cun-Hui Zhang for introducing to us the question of identifying the least favorable prior in \Cref{lemma:gaussian-seq-model} and the reference \cite{zhang2012minimax}, to Yury Polyanskiy for pointing us to the reference \cite{tang2023capacity}, to Bero Roos for telling us the references \cite{bobkov2005generalized,roos2015bobkov}, and to Yunzi Ding and Cheng Mao for discussions about an earlier version of this project. JNW is partially supported by a Sloan Research Fellowship and NSF grant DMS-2210583.

\appendix

\section{Useful identities}\label{sec:identity}
We discuss several identities involving permutations and matrix permanents which will be useful in the proofs of several results. Fix an integer $\ell\in [n]$ and $n$ distributions $P_1,\cdots,P_n\in \calP$. The first quantity appears in \eqref{eq:chi-squared-basis} of the doubly centered expansion in \Cref{sec:basis}, and is formally defined as
\begin{align}\label{eq:R_ell}
R_{\ell} = \bE_{X_1,\ldots,X_{\ell} \sim \overline{P}}\qth{\pth{ \bE_{\pi\sim \Unif(S_n)}\qth{\prod_{i=1}^{\ell} \frac{\rmd (P_{\pi(i)} - \overline{P})}{\rmd \overline{P}}(X_i)} }^2}, 
\end{align}
with $\overline{P} = \frac{1}{n}\sum_{i=1}^n P_i$ as usual. The second quantity is the degree-$\ell$ homogeneous polynomial $S_{\ell}$ in \eqref{eq:polynomial-eigenvalue} of the matrix permanent approach in \Cref{sec:permanents}: for the matrix $A\in \bR^{n\times n}$ defined in \eqref{eq:A-matrix} and its eigen-decomposition $A = UDU^\top$ with $D = \mathrm{diag}(\lambda_1,\cdots,\lambda_n)$, the quantity $S_{\ell}$ is defined as
\begin{align}\label{eq:S_ell}
S_{\ell} = \frac{n^n}{n!}\sum_{\substack{(\ell_2,\ldots,\ell_n)\in \naturals^{n-1} \\ \sum_{i=2}^n \ell_i = \ell}} \frac{\Perm(U_{n-\ell,\ell_2,\ldots,\ell_n})^2}{(n-\ell)!\ell_2!\cdots \ell_n!}\lambda_2^{\ell_2}\cdots \lambda_n^{\ell_n}. 
\end{align}
Here we recall that $U_{\ell_1,\ldots,\ell_n}$ denotes the $n\times n$ matrix with the first column of $U$ appearing $\ell_1$ times, second column appearing $\ell_2$ times, and so on. The last quantity $T_{\ell}$ is a useful intermediate quantity in our proof: let $\overline{A} = A - \frac{1}{n}{\bf 1}{\bf 1}^\top$ be the centered version of $A$, and define
\begin{align}\label{eq:T_ell}
T_{\ell} = \sum_{\substack{S,S'\subseteq [n] \\ |S|=|S'|=\ell }} \Perm(\overline{A}_{S,S'}), 
\end{align}
where $\overline{A}_{S,S'}$ denotes the submatrix of $\overline{A}$ with row indices in $S$ and column indices in $S'$. 

The following lemma shows that, within constant multiples of each other, the above quantities are essentially the same. 

\begin{lemma}\label{lemma:RST}
For the quantities defined in \eqref{eq:R_ell}--\eqref{eq:T_ell}, the following identity holds:
\begin{align*}
S_{\ell} = \binom{n}{\ell} \cdot R_{\ell} = \frac{(n-\ell)!}{n!}n^{\ell} \cdot T_{\ell}. 
\end{align*}
\end{lemma}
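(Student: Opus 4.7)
The plan is to prove the two equalities $S_{\ell} = \binom{n}{\ell} R_\ell$ and $S_\ell = \frac{(n-\ell)!}{n!} n^\ell T_\ell$ separately, using the centered matrix $\overline{A} := A - \frac{1}{n}\mathbf{1}\mathbf{1}^\top$ as the common bridge. The key computational input is the identity
\begin{equation*}
\int \frac{\rmd\Psi_j\,\rmd\Psi_k}{\rmd\overline{P}} \;=\; \int \frac{\rmd P_j\,\rmd P_k}{\rmd\overline{P}} - \int \rmd P_j - \int \rmd P_k + 1 \;=\; nA_{jk} - 1 \;=\; n\overline{A}_{jk},
\end{equation*}
which links the $L^2(\overline{P})$ inner products of the functions $\{\rmd\Psi_j/\rmd\overline{P}\}$ to the entries of $\overline{A}$.

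For the first identity, I will expand $R_\ell$ directly. Using
\begin{equation*}
\bE_{\pi\sim\Unif(S_n)}\qth{\prod_{i=1}^{\ell}\frac{\rmd\Psi_{\pi(i)}}{\rmd\overline{P}}(X_i)} \;=\; \frac{(n-\ell)!}{n!}\sum_{(j_1,\dots,j_\ell)\text{ distinct}} \prod_{i=1}^\ell \frac{\rmd\Psi_{j_i}}{\rmd\overline{P}}(X_i),
\end{equation*}
squaring and taking expectation under $\overline{P}^{\otimes\ell}$ turns $R_\ell$ into a double sum over distinct $\ell$-tuples $(j_i)$ and $(k_i)$ of $\prod_i n\overline{A}_{j_i,k_i}$. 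Next I will group these tuples by the underlying sets $S = \{j_i\}$ and $S'=\{k_i\}$, and use invariance of the permanent under row/column relabelings to identify the inner sum over the $(\ell!)^2$ orderings with $\ell!\,\Perm(\overline{A}_{S,S'})$. This yields $R_\ell = \frac{((n-\ell)!)^2\ell!\,n^\ell}{(n!)^2} T_\ell$, which after multiplication by $\binom{n}{\ell}$ gives $\binom{n}{\ell} R_\ell = \frac{(n-\ell)!\,n^\ell}{n!} T_\ell$, as desired.

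For the second identity, I will compare the eigenvalue expansions of both sides. Applying the generalized Cauchy--Binet formula of Marcus \cite{marcus1965generalized} to $\Perm(\overline{A}_{S,S'}) = \Perm\pth{\widetilde{U}_S \widetilde{D}^{1/2}\cdot \widetilde{D}^{1/2} \widetilde{U}_{S'}^\top}$ produces an expansion indexed by multi-indices $(\ell_2,\dots,\ell_n)$ with $\sum \ell_k = \ell$, in which the coefficient of $\prod_k \lambda_k^{\ell_k}$ is $\frac{1}{\prod\ell_k!}\Perm(U_S^{(\ell_2,\dots,\ell_n)})\,\Perm(U_{S'}^{(\ell_2,\dots,\ell_n)})$, where $U_S^{(\ell_2,\dots,\ell_n)}$ denotes the $\ell\times\ell$ matrix obtained by selecting rows in $S$ and taking $\ell_k$ copies of column $u_k$. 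Summing over $S,S'$ gives
\begin{equation*}
T_\ell \;=\; \sum_{(\ell_2,\dots,\ell_n)} \frac{\prod_k\lambda_k^{\ell_k}}{\prod_k\ell_k!}\pth{\sum_{S:|S|=\ell} \Perm(U_S^{(\ell_2,\dots,\ell_n)})}^{\!2}.
\end{equation*}
The final step exploits $u_1 = \mathbf{1}/\sqrt{n}$: expanding $\Perm(U_{n-\ell,\ell_2,\dots,\ell_n})$ by grouping permutations according to which $n-\ell$ rows are matched to the $n-\ell$ constant columns shows that
\begin{equation*}
\sum_{S:|S|=\ell} \Perm(U_S^{(\ell_2,\dots,\ell_n)}) \;=\; \frac{n^{(n-\ell)/2}}{(n-\ell)!}\,\Perm(U_{n-\ell,\ell_2,\dots,\ell_n}),
\end{equation*}
after which the expansion of $T_\ell$ matches $\frac{n!}{n^\ell(n-\ell)!}S_\ell$ term by term.

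The main obstacle I anticipate is the bookkeeping in the last step: correctly tracking how summing $\Perm(\overline{A}_{S,S'})$ over all row/column subsets of size $\ell$ corresponds to inserting $n-\ell$ copies of the constant eigenvector $u_1$ into the larger $n\times n$ permanent, so that the factors of $(1/\sqrt{n})^{n-\ell}$, $(n-\ell)!$, and $\binom{n}{\ell}$ conspire correctly. A slightly cleaner alternative, which avoids invoking Marcus's formula, is to observe via the complex Isserlis/Wick identity that $T_\ell = \bE[|e_\ell(\widetilde{P}z)|^2]$ for $z\sim\calCN(0,I_{n-1})$ and $\widetilde{P}=\widetilde{U}\widetilde{D}^{1/2}$ (using $\bE[y_i\bar y_j]=\overline{A}_{ij}$ and $\bE[y_iy_j]=0$ for $y=\widetilde{P}z$), then to expand $e_\ell(\widetilde{P}z)$ in monomials in $z$ and match coefficients against $S_\ell$ using \eqref{eq:complex-normal-moments}; this is essentially the argument underlying \Cref{lemma:wick-formula-2}.
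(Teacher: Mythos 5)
Your proof is correct, and the $R_\ell$--$T_\ell$ half is essentially identical to the paper's: both reduce the squared expectation over $\pi$ to a double sum of $\prod_i n\overline{A}_{j_i,k_i}$ over distinct tuples (the paper phrases this via an independent copy $\pi'$ and the probability $\bP(\pi([\ell])=S)=\binom{n}{\ell}^{-1}$, you via counting orderings, but the bookkeeping is the same and your constant $\frac{((n-\ell)!)^2\ell!\,n^\ell}{(n!)^2}$ agrees with the paper's $\frac{n^\ell}{\binom{n}{\ell}^2\ell!}$). For the $S_\ell$--$T_\ell$ half you take a genuinely different route. The paper introduces the deformation $D_t=\mathrm{diag}(\lambda_1,t\lambda_2,\dots,t\lambda_n)$ and computes $\Perm(UD_tU^\top)$ twice---once by the Cauchy--Binet formula (yielding $\frac{n!}{n^n}\sum_\ell S_\ell t^\ell$) and once by expanding $\Perm(t\overline{A}+J/n)$ over complementary submatrices (yielding $\sum_\ell\frac{(n-\ell)!}{n^{n-\ell}}T_\ell t^\ell$)---and matches coefficients of $t^\ell$; this sidesteps all multi-index bookkeeping. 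You instead apply the rectangular Cauchy--Binet formula to each $\Perm(\overline{A}_{S,S'})$ individually and then reassemble $\sum_S\Perm(U_S^{(\ell_2,\dots,\ell_n)})=\frac{n^{(n-\ell)/2}}{(n-\ell)!}\Perm(U_{n-\ell,\ell_2,\dots,\ell_n})$ by a Laplace expansion of the full permanent along the $n-\ell$ constant columns $u_1=\mathbf{1}/\sqrt{n}$; I checked the factors of $n^{(n-\ell)/2}$, $(n-\ell)!$, and $\prod_k\ell_k!$ and they do conspire to give $T_\ell=\frac{n!}{n^\ell(n-\ell)!}S_\ell$. The two arguments use the same ingredients (Marcus's permanental Cauchy--Binet and the special eigenvector $u_1$), but yours is a direct term-by-term identification while the paper's generating-function trick trades that bookkeeping for a polynomial identity in $t$; your alternative via the complex Isserlis identity is likewise sound and is essentially how the paper later proves \Cref{lemma:wick-formula-2}, though there it is combined with, rather than used to replace, \Cref{lemma:RST}.
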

\begin{proof}
We first prove the relation between $R_{\ell}$ and $T_{\ell}$. By introducing an independent copy $\pi'\sim \Unif(S_n)$ of $\pi$, for \eqref{eq:R_ell} it holds that
\begin{align*}
R_{\ell} &= \bE_{\pi,\pi'\sim \Unif(S_n)} \sth{ \bE_{X_1,\ldots,X_{\ell} \sim \overline{P}} \qth{ \prod_{i=1}^{\ell} \frac{\rmd (P_{\pi(i)} - \overline{P})}{\rmd \overline{P}}(X_i)\prod_{i=1}^{\ell} \frac{\rmd (P_{\pi'(i)} - \overline{P})}{\rmd \overline{P}}(X_i)} } \\
&= \bE_{\pi,\pi'\sim \Unif(S_n)} \qth{ \prod_{i=1}^{\ell} \pth{\int \frac{\rmd (P_{\pi(i)} - \overline{P})\times \rmd (P_{\pi'(i)} - \overline{P})}{\rmd \overline{P}} } } \\
&= \bE_{\pi,\pi'\sim \Unif(S_n)} \qth{ \prod_{i=1}^{\ell}\pth{\int \frac{\rmd P_{\pi(i)} \rmd P_{\pi'(i)}}{\rmd \overline{P}} - 1 }} \\
&\overset{\prettyref{eq:A-matrix}}{=} \bE_{\pi,\pi'\sim \Unif(S_n)} \qth{\prod_{i=1}^{\ell}(nA_{\pi(i),\pi'(i)}-1)} \\
&= n^{\ell} \cdot \bE_{\pi,\pi'\sim \Unif(S_n)} \qth{\prod_{i=1}^{\ell}\overline{A}_{\pi(i),\pi'(i)}} \\
&\stepa{=} \frac{n^{\ell}}{\binom{n}{\ell}^2 \ell!} \sum_{\substack{S,S'\subseteq [n] \\ |S|=|S'|=\ell }} \Perm(\overline{A}_{S,S'}) = \frac{n^{\ell}}{\binom{n}{\ell}^2 \ell!}\cdot T_{\ell}, 
\end{align*}
where (a) uses $\bP(\pi([\ell]) = S) = \bP(\pi'([\ell])=S') = \binom{n}{\ell}^{-1}$ for all $S,S'$ of size $\ell$ and averages over $\ell!$ bijections between $S$ and $S'$. 

Next we prove the relation between $S_\ell$ and $T_\ell$. Fix any $t\in \mathbb{R}$, let $D_t := \mathrm{diag}(\lambda_1,t\lambda_2,\ldots,t\lambda_n)$. By the Cauchy--Binet formula for permanents \cite[Theorem 2.1]{marcus1965generalized}, it holds that
\begin{align*}
\Perm(UD_tU^\top) &= \sum_{\substack{(\ell_1,\ldots,\ell_n)\in \naturals^n \\ \sum_{i=1}^n \ell_i = n}} \frac{\Perm(U_{\ell_1,\ldots,\ell_n})^2}{\ell_1!\cdots \ell_n!}\lambda_1^{\ell_1}(t\lambda_2)^{\ell_2}\cdots (t\lambda_n)^{\ell_n} \\
&= \sum_{\ell=0}^n \sum_{\substack{(\ell_2,\ldots,\ell_n)\in \naturals^{n-1} \\ \sum_{i=2}^n \ell_i = \ell}} \frac{\Perm(U_{n-\ell,\ell_2,\ldots,\ell_n})^2}{(n-\ell)!\ell_2!\cdots \ell_n!}(t\lambda_2)^{\ell_2}\cdots (t\lambda_n)^{\ell_n} = \frac{n!}{n^n}\sum_{\ell=0}^n S_{\ell} t^{\ell} 
\end{align*}
is a polynomial in $t$. An alternative expression of $\Perm(UD_tU^\top)$ is also possible: by the structure of leading eigenvalue and eigenvector in \Cref{lemma:A-property}, $UD_tU^\top = U(tD - (t-1)\mathrm{diag}(1,0,\ldots,0))U^\top = tA - \frac{t-1}{n}J = t\overline{A} + \frac{J}{n}$ for $J := {\bf 1}{\bf 1}^\top$ being the all-ones matrix. Consequently,
\begin{align*}
\Perm(UD_tU^\top) &= \Perm\pth{ t\overline{A} + \frac{J}{n} } \\
&= \sum_{S,S'\subseteq [n]: |S| = |S'|} \Perm\pth{(t\overline{A})_{S,S'}}\Perm\pth{(\frac{J}{n})_{[n]\backslash S, [n]\backslash S'}} \\
&=\sum_{\ell=0}^n  t^{\ell} \frac{(n-\ell)!}{n^{n-\ell}}\sum_{\substack{S,S'\subseteq [n] \\ |S|=|S'|=\ell }} \Perm(\overline{A}_{S,S'}) = \sum_{\ell=0}^n \frac{(n-\ell)!}{n^{n-\ell}}T_{\ell} t^{\ell}. 
\end{align*}
The above two expressions must give the same polynomial in $t$, so $S_{\ell} = \frac{(n-\ell)!}{n!}n^{\ell}\cdot T_{\ell}$. 
\end{proof}
\ifthenelse{\boolean{arxiv}}{\section{Proof of \Cref{thm:ESP}}\label{append:saddlepoint}}{\section{Proof of Theorem 4.3}\label{append:saddlepoint}}
Since the complex case \eqref{eq:complex_case} has been established in \Cref{subsec:key-inequality}, in this section we are devoted to the proof of \eqref{eq:real_case} in the real case. We split the analysis into several steps. 

\subsection{The case of binary support}
We first consider the special case where the coordinates of $x\in \mathbb{R}^n$ can only take two values. Thanks to the assumptions $\sum_{i=1}^n x_i = 0$ and $\sum_{i=1}^n x_i^2 = n$, the only cases are 
\begin{align*}
    x^{(k)} = \Big( \underbrace{\sqrt{\frac{k}{n-k}}, \ldots, \sqrt{\frac{k}{n-k}}}_{n-k\text{ copies}}, \underbrace{-\sqrt{\frac{n-k}{k}}, \ldots, -\sqrt{\frac{n-k}{k}}}_{k\text{ copies}} \Big), \quad \text{for some } k = 1, \cdots, n-1. 
\end{align*}
The target of this section is to show that
\begin{align}\label{eq:target_binary}
|e_{\ell}(x^{(k)})| \le \sqrt{10\binom{n}{\ell}} \qquad \text{for all }k,\ell \in [n-1].
\end{align}
Fix the choice of $r = \sqrt{\ell/(n-\ell)}$. By Cauchy's formula,
\begin{align*}
|e_{\ell}(x^{(k)})| &= \left|\frac{1}{2\pi \icom}\oint_{|z|=r} \pth{1+z\sqrt{\frac{k}{n-k}}}^{n-k}\pth{1-z\sqrt{\frac{n-k}{k}}}^k \frac{\rmd z}{z^{\ell+1}} \right| \\
&\stepa{\le} \frac{1}{2\pi r^{\ell}} \int_0^{2\pi} \pth{1+\frac{k}{n-k}r^2 + 2r\sqrt{\frac{k}{n-k}}\cos\theta}^{\frac{n-k}{2}} \pth{1+\frac{n-k}{k}r^2-2r\sqrt{\frac{n-k}{k}}\cos\theta}^{\frac{k}{2}}\rmd \theta \\
&\stepb{=} \frac{1}{\pi r^\ell} \int_{-1}^1 \pth{1+\frac{k}{n-k}r^2 + 2r\sqrt{\frac{k}{n-k}}t}^{\frac{n-k}{2}} \pth{1+\frac{n-k}{k}r^2-2r\sqrt{\frac{n-k}{k}}t}^{\frac{k}{2}}\frac{\rmd t}{\sqrt{1-t^2}}, 
\end{align*}
where (a) uses the triangle inequality and a change of variable $z=re^{\icom \theta}$ with $\theta\in [0,2\pi)$, and (b) applies another change of variable $t=\cos\theta\in [-1,1]$. We break the integral into two parts:
\begin{itemize}
    \item Fixing some $\delta\in \qth{0,\frac{1}{2}}$ to be specified later, define
    \begin{align*}
        A_1(\delta):= \int_{1-\delta \le |t|\le 1} \pth{1+\frac{k}{n-k}r^2 + 2r\sqrt{\frac{k}{n-k}}t}^{\frac{n-k}{2}} \pth{1+\frac{n-k}{k}r^2-2r\sqrt{\frac{n-k}{k}}t}^{\frac{k}{2}}\frac{\rmd t}{\sqrt{1-t^2}}.
    \end{align*}
    Note that by the AM-GM inequality, for all $t\in [-1,1]$ it holds that
    \begin{align*}
    &\pth{1+\frac{k}{n-k}r^2 + 2r\sqrt{\frac{k}{n-k}}t}^{\frac{n-k}{2}} \pth{1+\frac{n-k}{k}r^2-2r\sqrt{\frac{n-k}{k}}t}^{\frac{k}{2}} \\
    & \le \qth{\frac{n-k}{n} \pth{1+\frac{k}{n-k}r^2 + 2r\sqrt{\frac{k}{n-k}}t} + \frac{k}{n}\pth{1+\frac{n-k}{k}r^2-2r\sqrt{\frac{n-k}{k}}t}}^{\frac{n}{2}} \\
    &= (1+r^2)^{\frac{n}{2}}. 
    \end{align*}
    Therefore,
    \begin{align}\label{eq:A1-delta}
        A_1(\delta) \le (1+r^2)^{\frac{n}{2}} \int_{1-\delta \le |t|\le 1} \frac{\rmd t}{\sqrt{1-t^2}} = 2(1+r^2)^{\frac{n}{2}} \arccos(1-\delta)\le \frac{2\pi}{3}(1+r^2)^{\frac{n}{2}}\sqrt{2\delta}, 
    \end{align}
    where the last step follows from $1-\cos t\ge \frac{1}{2}\pth{\frac{3t}{\pi}}^2$ for all $t\in \qth{0,\frac{\pi}{3}}$.
    \item For the remaining integral 
    \begin{align*}
        A_2(\delta) := \int_{-1+\delta}^{1-\delta} \pth{1+\frac{k}{n-k}r^2 + 2r\sqrt{\frac{k}{n-k}}t}^{\frac{n-k}{2}} \pth{1+\frac{n-k}{k}r^2-2r\sqrt{\frac{n-k}{k}}t}^{\frac{k}{2}}\frac{\rmd t}{\sqrt{1-t^2}}, 
    \end{align*}
    we note that $\sqrt{1-t^2}\ge \sqrt{1-(1-\delta)^2}\ge \sqrt{3\delta /2}$ for $\delta\in \qth{0,\frac{1}{2}}$. In addition, 
    \begin{align*}
        t_- := -\frac{1+\frac{k}{n-k}r^2}{2r\sqrt{\frac{k}{n-k}}}\le -1, \qquad t_+ := \frac{1+\frac{n-k}{k}r^2}{2r\sqrt{\frac{n-k}{k}}} \ge 1
    \end{align*}
    by AM-GM, with
    \begin{align*}
        t_+ - t_- = \frac{n}{\sqrt{k(n-k)}}\cdot \frac{1+r^2}{2r}. 
    \end{align*}
    Using a change of variable $t=t_- + (t_+-t_-)u$ for $u\in [0,1]$, we upper bound $A_2(\delta)$ as
    \begin{align*}
        A_2(\delta)&\le \sqrt{\frac{2}{3\delta}} \int_{-1+\delta}^{1-\delta} \pth{1+\frac{k}{n-k}r^2 + 2r\sqrt{\frac{k}{n-k}}t}^{\frac{n-k}{2}} \pth{1+\frac{n-k}{k}r^2-2r\sqrt{\frac{n-k}{k}}t}^{\frac{k}{2}}\rmd t \\
        &\le \sqrt{\frac{2}{3\delta}}\int_{t_-}^{t_+} \pth{1+\frac{k}{n-k}r^2 + 2r\sqrt{\frac{k}{n-k}}t}^{\frac{n-k}{2}} \pth{1+\frac{n-k}{k}r^2-2r\sqrt{\frac{n-k}{k}}t}^{\frac{k}{2}}\rmd t \\
        &= \sqrt{\frac{2}{3\delta}}(t_+-t_-)^{\frac{n}{2}+1} \int_0^1 \pth{2r\sqrt{\frac{k}{n-k}} u}^{\frac{n-k}{2}}\pth{2r\sqrt{\frac{n-k}{k}}(1-u)}^{\frac{k}{2}}\rmd u \\
        &= \sqrt{\frac{2}{3\delta}} \frac{(1+r^2)^{\frac{n}{2}+1}}{2r}\cdot \frac{n^{\frac{n}{2}+1}}{k^{\frac{k+1}{2}}(n-k)^{\frac{n-k+1}{2}}}\frac{\Gamma\pth{\frac{k}{2}+1}\Gamma\pth{\frac{n-k}{2}+1}}{\Gamma\pth{\frac{n}{2}+2}}, 
    \end{align*}
    where $\Gamma(x) = \int_0^\infty t^{x-1} e^{-t}\rmd t$ is the Gamma function. Using Stirling's approximation
    \begin{align*}
        \sqrt{2\pi x}\pth{\frac{x}{e}}^x \le \Gamma(x+1) \le \sqrt{2\pi x}\pth{\frac{x}{e}}^x\cdot e^{\frac{1}{12x}}, \qquad x>0, 
    \end{align*}
    we get
    \begin{align*}
    &\frac{n^{\frac{n}{2}+1}}{k^{\frac{k+1}{2}}(n-k)^{\frac{n-k+1}{2}}}\frac{\Gamma\pth{\frac{k}{2}+1}\Gamma\pth{\frac{n-k}{2}+1}}{\Gamma\pth{\frac{n}{2}+2}}\\
    &\le \frac{n^{\frac{n}{2}+1}}{k^{\frac{k+1}{2}}(n-k)^{\frac{n-k+1}{2}}} \cdot \frac{\sqrt{\pi k}\pth{\frac{k}{2e}}^{\frac{k}{2}}\sqrt{\pi(n-k)}\pth{\frac{n-k}{2e}}^{\frac{n-k}{2}} }{\pth{\frac{n}{2}+1} \sqrt{\pi n}\pth{\frac{n}{2e}}^{\frac{n}{2}}}\exp\left(\frac{1}{6k}+\frac{1}{6(n-k)}\right) \\
    &\le \frac{e^{1/3}\sqrt{\pi n}}{\frac{n}{2}+1} \le \frac{2e^{1/3}\sqrt{\pi}}{\sqrt{n}}, 
    \end{align*}
    which is an upper bound independent of $k$. Plugging it into the upper bound of $A_2(\delta)$ gives
    \begin{align}\label{eq:A2-delta}
    A_2(\delta)\le e^{1/3}\sqrt{\frac{2\pi}{3\delta}} \frac{(1+r^2)^{\frac{n}{2}+1}}{r\sqrt{n}} < \frac{2\pi}{3}\frac{(1+r^2)^{\frac{n}{2}+1}}{r\sqrt{n\delta}}. 
    \end{align}
\end{itemize}

Combining \eqref{eq:A1-delta} and \eqref{eq:A2-delta}, we get
\begin{align*}
|e_\ell(x^{(k)})|\le \frac{2(1+r^2)^{\frac{n}{2}}}{3r^{\ell}}\pth{\sqrt{2\delta} + \frac{1+r^2}{r\sqrt{n\delta}}} = \frac{2(1+r^2)^{\frac{n}{2}}}{3r^{\ell}}\pth{\sqrt{2\delta} + \frac{1}{\sqrt{\delta}}\cdot\sqrt{\frac{n}{\ell(n-\ell)}}}, 
\end{align*}
where the final step plugs in the choice of $r=\sqrt{\ell/(n-\ell)}$. Choosing
\begin{align*}
\delta = \frac{1}{2}\sqrt{\frac{n}{2\ell(n-\ell)}} \le \frac{1}{2},
\end{align*}
together with Stirling's approximation, leads to the upper bound
\begin{align*}
|e_\ell(x^{(k)})| &\le \frac{2(1+r^2)^{\frac{n}{2}}}{r^{\ell}}\pth{\frac{n}{2\ell(n-\ell)}}^{\frac{1}{4}} = \frac{2n^{\frac{n}{2}}}{\ell^{\frac{\ell}{2}}(n-\ell)^{\frac{n-\ell}{2}}}\pth{\frac{n}{2\ell(n-\ell)}}^{\frac{1}{4}} \\
&\le 2^{\frac{3}{4}}\pth{ \frac{n^{n+\frac{1}{2}}}{\ell^{\ell+\frac{1}{2}}(n-\ell)^{n-\ell+\frac{1}{2}}} }^{\frac{1}{2}} \le 2^{\frac{3}{4}}\pth{\sqrt{2\pi}e^{1/3} \binom{n}{\ell}}^{\frac{1}{2}} < \sqrt{10\binom{n}{\ell}}, 
\end{align*}
which proves the desired result \eqref{eq:target_binary}. 

\begin{remark}
The following insights of the saddle point method are used in the above proof. Note that $e_\ell(x)$ is the coefficient of $z^{\ell}$ in $f(z) = \prod_{i=1}^n (1+x_i z)$, with corresponding saddle point equation
\begin{align*}
    \frac{\ell}{z} = \frac{\rmd}{\rmd z}\log f(z) = \sum_{i=1}^n \frac{x_i}{1+x_iz}. 
\end{align*}
Specializing to the vector $x=x^{(k)}$ gives the saddle points
\begin{align*}
    z_{\pm} = \frac{1}{2(n-\ell)}\pth{-\frac{\ell(n-2k)}{\sqrt{k(n-k)}} \pm \sqrt{\frac{(n-2k)^2}{k(n-k)}\ell^2 - 4\ell(n-\ell)}}. 
\end{align*}
We observe that whenever the discriminant (the term under the squared root) is not positive, both saddle points $z_\pm$ have magnitude $\sqrt{\ell/(n-\ell)}$, motivating our choice of $r=\sqrt{\ell/(n-\ell)}$ in the proof. In addition, the Laplace approximation around $z_\pm$ suggests that $|e_\ell(x^{(k)})|$ attains the maximum when $z_+$ and $z_-$ coincide on the real axis, motivating us to decompose the integral into two quantities $A_1(\delta)$ and $A_2(\delta)$ as in \eqref{eq:A1-delta} and \eqref{eq:A2-delta}.
\end{remark}

\subsection{Reduction to binary support}
In this section, we show that the upper bound of $|e_\ell(x)|$ for general $x\in \bR^n$ could be reduced to the case of binary support in \eqref{eq:target_binary}. Specifically, for $\ell\ge 4$, we will prove by induction on $n\ge \ell$ that 
\begin{align}\label{eq:ind_hyp}
|e_\ell(x)| \le \sqrt{10\binom{n}{\ell}}, \qquad \text{for all } x\in \bR^n \text{ with } \sum_{i=1}^n x_i = 0, \sum_{i=1}^n x_i^2 = n.
\end{align}
The remaining cases $\ell\le 3$ will be dealt with in the next section. 

The base case $n=\ell$ of \eqref{eq:ind_hyp} is clear: by the AM-GM inequality, 
\begin{align*}
|e_\ell(x)|^2 = \prod_{i=1}^\ell x_i^2 \le \pth{\frac{1}{\ell}\sum_{i=1}^\ell x_i^2}^\ell = 1. 
\end{align*}
For the inductive step, assume that $n\ge \ell+1$ and the induction hypothesis \eqref{eq:ind_hyp} holds for all values smaller than $n$. As the constraint set of \eqref{eq:ind_hyp} is compact, the optimization program \eqref{eq:ind_hyp} must admit a maximizer $x^\star$. By the method of Lagrangian multipliers (for maximizing $e_\ell(x)$ or $-e_\ell(x)$), for any maximizer $x^\star$, there must exist $\lambda,\mu\in \bR$ such that 
\begin{align}\label{eq:stationarity}
e_{\ell-1}\pth{x^\star\backslash \sth{x_i^\star}} = \lambda x_i^\star + \mu, \qquad \text{for all } i \in [n]. 
\end{align}
Based on \eqref{eq:stationarity}, we split the analysis into three steps. 

\ifthenelse{\boolean{arxiv}}{\paragraph{Step I: if $x_i^\star = 0$ for some $i\in [n]$.}}{\vspace{0.2cm} \noindent \emph{Step I: if $x_i^\star = 0$ for some $i\in [n]$.}} In this case, the rescaled vector $\sqrt{1-n^{-1}}(x^\star \backslash \sth{x_i^\star})$ satisfies the constraint in \eqref{eq:ind_hyp} with $n$ replaced by $n-1$. By the induction hypothesis, 
\begin{align*}
|e_\ell(x^\star)| = \pth{\frac{n}{n-1}}^{\frac{\ell}{2}}\left|e_\ell\pth{\sqrt{1-n^{-1}}(x^\star \backslash \sth{x_i^\star})}\right| \le \pth{\frac{n}{n-1}}^{\frac{\ell}{2}}\sqrt{10\binom{n-1}{\ell}} \le \sqrt{10\binom{n}{\ell}}, 
\end{align*}
where the last step follows from simple algebra and Bernoulli's inequality $\pth{1-\frac{1}{n}}^{\ell}\ge 1-\frac{\ell}{n}$. Therefore, the inductive hypothesis \eqref{eq:ind_hyp} holds for $n$ in this case, and we are done. Subsequently, we may assume that every maximizer $x^\star$ of \eqref{eq:ind_hyp} must have $x_i^\star\neq 0$ for all $i\in [n]$. 

\ifthenelse{\boolean{arxiv}}{\paragraph{Step II: show that $|\mathrm{supp}(x^\star)|\le 3$.}}{\vspace{0.2cm} \noindent \emph{Step II: show that $|\mathrm{supp}(x^\star)|\le 3$.}} Assume by contradiction that $|\mathrm{supp}(x^\star)|\ge 4$, and that the values of $(x_1^\star,x_2^\star,x_3^\star,x_4^\star)$ are distinct. Choosing $i=1,2$ in \eqref{eq:stationarity}, a subtraction gives
\begin{align*}
(x_2^\star - x_1^\star)e_{\ell-2}(x^\star\backslash \sth{x_1^\star,x_2^\star}) = \lambda(x_1^\star - x_2^\star) \Longrightarrow  e_{\ell-2}(x^\star\backslash \sth{x_1^\star,x_2^\star}) = -\lambda. 
\end{align*}
Similarly we also have $e_{\ell-2}(x^\star\backslash \sth{x_1^\star,x_3^\star}) = -\lambda$, and a further subtraction gives
\begin{align}\label{eq:zero_123}
(x_3^\star - x_2^\star) e_{\ell-3}(x^\star\backslash \sth{x_1^\star,x_2^\star,x_3^\star}) = 0 \Longrightarrow e_{\ell-3}(x^\star\backslash \sth{x_1^\star,x_2^\star,x_3^\star}) = 0. 
\end{align}
Again, subtracting \eqref{eq:zero_123} with $e_{\ell-3}(x^\star\backslash \sth{x_1^\star,x_2^\star,x_4^\star}) = 0$ leads to
\begin{align}\label{eq:zero_1234}
(x_4^\star - x_3^\star) e_{\ell-4}(x^\star\backslash \sth{x_1^\star,x_2^\star,x_3^\star,x_4^\star}) = 0 \Longrightarrow e_{\ell-4}(x^\star\backslash \sth{x_1^\star,x_2^\star,x_3^\star,x_4^\star}) = 0. 
\end{align}
Moreover, since
\begin{align*}
e_{\ell-3}(x^\star\backslash \sth{x_1^\star,x_2^\star,x_3^\star}) = x_4^\star e_{\ell-4}(x^\star\backslash \sth{x_1^\star,x_2^\star,x_3^\star,x_4^\star}) + e_{\ell-3}(x^\star\backslash \sth{x_1^\star,x_2^\star,x_3^\star,x_4^\star}), 
\end{align*}
by \eqref{eq:zero_123} and \eqref{eq:zero_1234} we also have
\begin{align}\label{eq:zero_1234_another}
e_{\ell-3}(x^\star\backslash \sth{x_1^\star,x_2^\star,x_3^\star,x_4^\star}) = 0.
\end{align}
Based on \eqref{eq:zero_1234} and \eqref{eq:zero_1234_another}, we invoke the following result in \cite[Fact B]{gopalan2014inequalities}, which is a property for real-rooted polynomials. 

\begin{lemma}\label{lemma:real-rooted-poly}
For a real vector $x\in \bR^n$ and $0\le k\le n-1$, if $e_k(x) = e_{k+1}(x) = 0$, then $e_\ell(x)=0$ for all $\ell\ge k$. 
\end{lemma}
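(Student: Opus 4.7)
The plan is to reduce the statement to a claim about real-rooted polynomials with a nonzero constant term and then to derive a contradiction via Newton's identities. Let $m := \#\{i : x_i \neq 0\}$ and denote by $y = (y_1,\ldots,y_m)$ the tuple of nonzero entries of $x$, so that $e_j(x) = e_j(y)$ for $0 \leq j \leq m$ while $e_j(x) = 0$ for $j > m$; in particular, $e_m(y) = \prod_{i=1}^m y_i \neq 0$. The conclusion of the lemma is automatic when $m \leq k-1$, and the cases $m \in \{k, k+1\}$ yield immediate contradictions, since then $e_m(y) \in \{e_k(x), e_{k+1}(x)\} = \{0\}$ conflicts with $e_m(y) \neq 0$. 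It therefore suffices to derive a contradiction under the assumption $m \geq k+2$.

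The key intermediate step will be the following sub-claim: \emph{if $P(z) = \sum_{j=0}^N c_j z^j$ is a real-rooted polynomial of degree $N \geq 3$ with $c_0 \neq 0$, then it is impossible to have $c_1 = c_2 = 0$.} To prove it, I would factor $P(z) = c_N \prod_{i=1}^N (z - r_i)$ with real $r_i$; the condition $c_0 \neq 0$ forces every $r_i$ to be nonzero. Vieta's formulas translate $c_1 = c_2 = 0$ into $e_{N-1}(r) = e_{N-2}(r) = 0$, and dividing through by $\prod_i r_i \neq 0$ gives $e_1(1/r) = e_2(1/r) = 0$, where $1/r := (1/r_1,\ldots,1/r_N)$. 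Newton's identity $p_2 = e_1^2 - 2 e_2$ then yields $\sum_{i=1}^N 1/r_i^2 = 0$, which is impossible since each $1/r_i$ is a nonzero real.

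To finish, I would introduce the real-rooted polynomial $Q(z) := \prod_{i=1}^m (1 + y_i z) = \sum_{j=0}^m e_j(y) z^j$, of degree $m$ with $Q(0) = 1$. A direct computation shows that for each $1 \leq j \leq k$, the derivative $Q^{(j-1)}(z)$ has constant term $(j-1)!\,e_{j-1}(y)$, coefficient of $z^1$ equal to $j!\,e_j(y)$, and coefficient of $z^2$ equal to $\tfrac{(j+1)!}{2}\,e_{j+1}(y)$; by Rolle's theorem, it is real-rooted of degree $m - j + 1 \geq 3$. Starting from the hypothesis $e_k(y) = e_{k+1}(y) = 0$, the sub-claim applied to $Q^{(k-1)}$ forces $e_{k-1}(y) = 0$. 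Iterating downward through $j = k-1, k-2, \ldots, 2$, each step combines the newly derived zero with a previously established one to supply two consecutive vanishing coefficients of $z^1, z^2$ in $Q^{(j-1)}$, and the sub-claim then yields $e_{j-1}(y) = 0$. After step $j = 2$ we have $e_1(y) = e_2(y) = 0$ alongside $e_0(y) = 1$, and a final application of the sub-claim to $Q$ itself produces the desired contradiction.

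The main obstacle I anticipate is formulating the sub-claim precisely so that it meshes with the downward iteration on derivatives; once that is arranged, Newton's identity $p_2 = e_1^2 - 2 e_2$ carries the essential content of the argument in a single line, and the bookkeeping of coefficient formulas is routine.
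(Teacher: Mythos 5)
Your proof is correct. Note that the paper does not actually prove this lemma: it is quoted verbatim from \cite[Fact B]{gopalan2014inequalities}, so there is no in-paper argument to compare against; your write-up supplies a self-contained proof in the same spirit as the standard one. The reduction to the nonzero coordinates $y$ is sound (subsets containing a zero coordinate contribute nothing, so $e_j(x)=e_j(y)$ for $j\le m$ and $e_j(x)=0$ for $j>m$), the cases $m\le k+1$ are correctly dispatched, and the sub-claim is valid: Vieta gives $e_{N-1}(r)=e_{N-2}(r)=0$, dividing by $e_N(r)\neq 0$ gives $e_1(1/r)=e_2(1/r)=0$, and $p_2=e_1^2-2e_2$ forces $\sum_i r_i^{-2}=0$, impossible for nonzero reals. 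The coefficient bookkeeping for $Q^{(j-1)}$ checks out, the degree condition $m-j+1\ge 3$ follows from $j\le k$ and $m\ge k+2$, and closure of real-rootedness under differentiation is the standard Rolle argument. Two cosmetic remarks: the case $k=0$ is vacuous since $e_0(x)=1\neq 0$, so you may assume $k\ge 1$ throughout (which you implicitly do when taking $Q^{(k-1)}$); and your sub-claim in fact holds for all $N\ge 2$, though you only ever invoke it with $N\ge 3$.
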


As $\ell \ge 4$ and $n\ge \ell+1$, \Cref{lemma:real-rooted-poly} applied to \eqref{eq:zero_1234} and \eqref{eq:zero_1234_another} gives $\prod_{i=5}^n x_i^\star = e_{n-4}(x^\star\backslash \sth{x_1^\star,x_2^\star,x_3^\star,x_4^\star}) = 0$, i.e. one of $(x_5^\star, \ldots, x_n^\star)$ must be zero. However, it is assumed at the end of Step I that $x_i^\star\neq 0$ for all $i\in [n]$, a contradiction! So $|\mathrm{supp}(x^\star)|\le 3$, as desired. 

\ifthenelse{\boolean{arxiv}}{\paragraph{Step III: show that $|\mathrm{supp}(x^\star)|\le 2$.}}{\vspace{0.2cm} \noindent \emph{Step III: show that $|\mathrm{supp}(x^\star)|\le 2$.}} We proceed to show that $|\mathrm{supp}(x^\star)|=3$ is also impossible. Assume by contradiction that $|\mathrm{supp}(x^\star)|=3$, and the values of $(x_1^\star,x_2^\star,x_3^\star)$ are distinct. We propose to find a triple $(x_1,x_2,x_3)\in \bR^3$ of distinct elements such that
\begin{align}
x_1 + x_2 + x_3 &= x_1^\star + x_2^\star + x_3^\star, \label{eq:e_1} \\
x_1x_2 + x_2x_3 + x_1x_3 &= x_1^\star x_2^\star + x_2^\star x_3^\star + x_1^\star x_3^\star, \label{eq:e_2} \\
\{x_1, x_2, x_3\} \cap \{x_1^\star, x_2^\star, x_3^\star, 0\} &= \varnothing. \label{eq:disjointness}
\end{align}
We show that such a triple $(x_1,x_2,x_3)$ exists. Note that \eqref{eq:e_1} and \eqref{eq:e_2} define an intersection of a hyperplane and a sphere in $\mathbb{R}^3$, which is a circle. This circle is nondegenerate (and thus has infinitely many points on it), for $(x_1^\star,x_2^\star,x_3^\star)$ and $(x_2^\star,x_1^\star,x_3^\star)$ are two distinct points on this circle. In addition, this circle belongs to none of the hyperplanes $\{x_i = x_j\}$, $\{x_i = x_j^\star\}$, or $\{x_i = 0\}$, so each hyperplane only intersects the circle at finitely many points. Therefore, we can choose any point $(x_1,x_2,x_3)$ on the circle other than the above intersections, and this triple satisfies distinctness and \eqref{eq:e_1}--\eqref{eq:disjointness}.

Given such a triple, we define a new vector $x' = (x_1,x_2,x_3,x_4^\star, \ldots, x_n^\star)\in \bR^n$. By \eqref{eq:e_1} and \eqref{eq:e_2}, it is clear that $x'$ satisfies the constraints in \eqref{eq:ind_hyp}. In addition, viewing both $e_{\ell}(x')$ and $e_\ell(x^\star)$ as polynomials of the first three elements, we get
\begin{align*}
e_{\ell}(x') - e_\ell(x^\star) &= e_{\ell-1}(x^\star \backslash \sth{x_1^\star,x_2^\star,x_3^\star})(x_1+x_2+x_3 - x_1^\star-x_2^\star-x_3^\star) \\
&\quad + e_{\ell-2}(x^\star \backslash \sth{x_1^\star,x_2^\star,x_3^\star})(x_1x_2 + x_2x_3 + x_1x_3 -x_1^\star x_2^\star - x_2^\star x_3^\star - x_1^\star x_3^\star) \\
&\quad + e_{\ell-3}(x^\star \backslash \sth{x_1^\star,x_2^\star,x_3^\star})(x_1x_2x_3 - x_1^\star x_2^\star x_3^\star) \\
&= 0, 
\end{align*}
where the last step is due to \eqref{eq:e_1}, \eqref{eq:e_2}, and \eqref{eq:zero_123}. In other words, $x'$ is also a maximizer of $|e_\ell(\cdot)|$, while $|\mathrm{supp}(x')|\ge 4$ and $x'$ does not contain zero by distinctness of $(x_1,x_2,x_3)$ and \eqref{eq:disjointness}. Proceeding as Step II would lead to the conclusion $|\mathrm{supp}(x')|\le 3$, which is a contradiction. Therefore, we have established that $|\mathrm{supp}(x^\star)|\le 2$, and the desired result \eqref{eq:ind_hyp} follows from the inequality \eqref{eq:target_binary} in the binary case. 

\subsection{Remaining corner cases}
The only cases not covered in \eqref{eq:ind_hyp} are the scenarios $\ell\le 3$, which we handle separately.  
\begin{itemize}
    \item For $\ell \in \{0,1\}$, the results $e_0(x) = 1$ and $e_1(x) = 0$ are trivial. 
    \item For $\ell=2$, Newton's identity gives that
    \begin{align*}
        |e_2(x)| = \frac{1}{2}\left| e_1(x)^2 - \sum_{i=1}^n x_i^2\right| = \frac{n}{2} \le \sqrt{\binom{n}{2}}. 
    \end{align*}
    \item For $\ell=3$, Newton's identity gives that
    \begin{align*}
        e_3(x) &= \frac{1}{3}\pth{ e_1(x)\pth{e_2(x) - \sum_{i=1}^n x_i^2} + \sum_{i=1}^n x_i^3 } = \frac{1}{3}\sum_{i=1}^n x_i^3. 
    \end{align*}
    As $\sum_{i=1}^n x_i^2 = n$, we have $|x_i|\le \sqrt{n}$ for all $i\in [n]$, and consequently
    \begin{align*}
    |e_3(x)| \le \frac{\sqrt{n}}{3}\sum_{i=1}^n x_i^2 = \frac{n^{\frac{3}{2}}}{3} \le \sqrt{3\binom{n}{3}}. 
    \end{align*}
\end{itemize}
In all these scenarios, the inequality \eqref{eq:real_case} of \Cref{thm:ESP} holds. The proof of \Cref{thm:ESP} is therefore complete. 
\section{Deferred proofs}\label{append:proofs}

\subsection{Completing the proof of \Cref{thm:main}}\label{append:proof_thm_main}
Since the upper bound \eqref{eq:upper_bound_1} of \Cref{thm:main} has been established in \Cref{sec:basis}, we prove the remaining upper bounds \eqref{eq:upper_bound_2} and \eqref{eq:upper_bound_3} here from \Cref{lemma:UB-entire-sum}. Based on \eqref{eq:polynomial-eigenvalue} and \Cref{lemma:UB-entire-sum}, we have
\begin{align*}
\chi^2(\bP_n \| \bQ_n) + 1 \le \sum_{\ell=0}^n \sum_{\substack{(\ell_2,\ldots,\ell_n)\in \naturals^{n-1} \\ \sum_{i=2}^n \ell_i = \ell}} \lambda_2^{\ell_2}\cdots \lambda_n^{\ell_n} \le \sum_{(\ell_2,\ldots,\ell_n)\in \naturals^{n-1}} \lambda_2^{\ell_2}\cdots \lambda_n^{\ell_n} = \prod_{i=2}^n \frac{1}{1-\lambda_i}. 
\end{align*}
Thanks to the spectral gap bound in \Cref{lemma:A-property}, we have $\lambda_i\in \qth{0, 1 - \frac{1}{\AffH}}$, so that
\begin{align*}
    \log \frac{1}{1-\lambda_i} \le \lambda_i \cdot \frac{\AffH}{\AffH- 1}\log \AffH
\end{align*}
by the convexity of $\lambda\mapsto \log\frac{1}{1-\lambda}$ on $[0,1)$. Summing over $i$ gives the upper bound
\begin{align*}
\log\pth{\chi^2(\bP_n \| \bQ_n) + 1} &\le \sum_{i=2}^n \lambda_i\cdot \frac{\AffH}{\AffH- 1}\log \AffH \\
&\stepa{\le} \Capa\cdot \frac{\AffH}{\AffH- 1}\log \AffH \stepb{\le} \Capa\pth{1+\log \AffH}, 
\end{align*}
where (a) follows from \Cref{lemma:A-property}, (b) uses $\log x\le x-1$ for all $x>0$. For the inequality \eqref{eq:upper_bound_3}, we have
\begin{align*}
    \log\pth{\chi^2(\bP_n \| \bQ_n) + 1}&\le \Capa\cdot \frac{\AffH}{\AffH- 1}\log \AffH \\
    &\stepc{\le} \Capa\cdot \frac{\Diam+1}{\Diam}\log (\Diam+1) \stepd{\le} (\Capa+1)\log(\Diam+1),
\end{align*}
where (c) is due to $\AffH\le 1+\Diam$ and 
\begin{align*}
\frac{\rmd}{\rmd x} \pth{\frac{x\log x}{x-1}} = \frac{x-1-\log x}{(x-1)^2} \ge 0, 
\end{align*}
and (d) uses $\Capa\le \Diam$. Therefore the upper bounds \eqref{eq:upper_bound_2} and \eqref{eq:upper_bound_3} are established. 

\subsection{Proof of \Cref{cor:specific_families}}\label{append:cor_family}
By \Cref{thm:main}, it suffices to upper bound $\Capa$ and $\AffH$ for the given families $\calP$. While the computation of $\AffH$ is typically straightforward, the evaluation of $\Capa$ may require some effort. We present a useful lemma to upper bound $\Capa$. 

\begin{lemma}\label{lemma:chi2-capacity}
Let $\calP_1, \ldots, \calP_m$ be families of probability distributions over the same space. Then
\begin{align*}
\sfC_{\chi^2}\pth{\bigcup_{i=1}^m \calP_i} \le \sum_{i=1}^m \sfC_{\chi^2}\pth{\calP_i} + m - 1.
\end{align*}
\end{lemma}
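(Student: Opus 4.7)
The plan is to reduce a general prior on the union to its restrictions on each $\calP_i$, via the elementary observation that a mixture prior yields a mixture marginal, and then to exploit the pointwise bound $\bar P \ge w_i \bar P_i$ between the full marginal and the $i$-th partial marginal.

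Concretely, let $\rho \in \Delta\bigl(\bigcup_i \calP_i\bigr)$ be an arbitrary prior. First, partition the support of $\rho$ as $S_1 \sqcup \cdots \sqcup S_m$ with $S_i \subseteq \calP_i$ (assign points in the overlap arbitrarily), set $w_i := \rho(S_i)$, and let $\rho_i$ be the conditional of $\rho$ on $S_i$ when $w_i > 0$. Write $\bar P := \bE_{P \sim \rho}[P]$ and $\bar P_i := \bE_{P \sim \rho_i}[P]$, so that $\bar P = \sum_{i: w_i > 0} w_i \bar P_i$.

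Next, expand
\begin{align*}
\bE_{P \sim \rho}[\chi^2(P \| \bar P)] + 1 \;=\; \bE_{P \sim \rho}\!\left[\int \frac{\rmd P^2}{\rmd \bar P}\right] \;=\; \sum_{i: w_i > 0} w_i \, \bE_{P \sim \rho_i}\!\left[\int \frac{\rmd P^2}{\rmd \bar P}\right].
\end{align*}
The key step is the pointwise inequality $\bar P \ge w_i \bar P_i$ (as measures), which gives $\rmd\bar P_i / \rmd\bar P \le 1/w_i$ and therefore
\begin{align*}
\int \frac{\rmd P^2}{\rmd \bar P} \;\le\; \frac{1}{w_i} \int \frac{\rmd P^2}{\rmd \bar P_i} \;=\; \frac{1}{w_i}\bigl(1 + \chi^2(P \| \bar P_i)\bigr).
\end{align*}
Substituting and using $\bE_{P \sim \rho_i}[\chi^2(P \| \bar P_i)] \le \sfC_{\chi^2}(\calP_i)$ by definition,
\begin{align*}
\bE_{P \sim \rho}[\chi^2(P \| \bar P)] + 1 \;\le\; \sum_{i: w_i > 0}\bigl(1 + \sfC_{\chi^2}(\calP_i)\bigr) \;\le\; m + \sum_{i=1}^m \sfC_{\chi^2}(\calP_i).
\end{align*}
Subtracting $1$ and taking the supremum over $\rho$ yields the claim.

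There is no real obstacle here beyond bookkeeping: the only subtlety is to keep track of indices $i$ with $w_i = 0$ (for which the term is simply absent from $\bar P$, and dropping it only strengthens the bound since $\sfC_{\chi^2}(\calP_i) \ge 0$), and to verify that the partition $S_1 \sqcup \cdots \sqcup S_m$ can be chosen measurably when $\calP_i$'s overlap, which is automatic for the countable/measurable settings relevant to the paper.
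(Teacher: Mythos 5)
Your proof is correct and uses essentially the same argument as the paper: the decomposition of the prior into its restrictions to the $\calP_i$ and the pointwise bound $\rmd\bar P \ge w_i\,\rmd\bar P_i$ are exactly the paper's key steps, the only difference being that you handle general $m$ directly while the paper reduces to $m=2$ and inducts.
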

\begin{proof}
By induction it suffices to prove the lemma for $m=2$. In addition, since $\sfC_{\chi^2}\pth{\calP} \le \sfC_{\chi^2}\pth{\calP'}$ for $\calP\subseteq \calP'$, without loss of generality we may assume that $\calP_1$ and $\calP_2$ are disjoint. 

Let $\rho$ be a probability distribution over $\calP_1\cup \calP_2$, with $\rho(\calP_1)=a$ and $\rho(\calP_2) = 1-a$. Let $\rho_1, \rho_2$ be the restriction (conditional distribution) of $\rho$ to $\calP_1$ and $\calP_2$, respectively, it holds that
\begin{align*}
    \rho = a\rho_1 + (1-a)\rho_2. 
\end{align*}
We upper bound the $\chi^2$ mutual information $I_{\chi^2}(P;X)$ as follows: 
\begin{align*}
I_{\chi^2}(P;X) &= \bE_{P\sim \rho}\qth{\chi^2(P\| \bE_{P'\sim \rho}[P'] ) } \\
&= \bE_{P\sim \rho}\qth{\int \frac{(\rmd P)^2}{ \bE_{P'\sim \rho}[\rmd P']} } - 1 \\
&= a \bE_{P\sim \rho_1}\qth{\int \frac{(\rmd P)^2}{ \bE_{P'\sim \rho}[\rmd P']} } + (1-a) \bE_{P\sim \rho_2}\qth{\int \frac{(\rmd P)^2}{ \bE_{P'\sim \rho}[\rmd P']} } - 1\\
&\le a \bE_{P\sim \rho_1}\qth{\int \frac{(\rmd P)^2}{ a\bE_{P'\sim \rho_1}[\rmd P']} } + (1-a) \bE_{P\sim \rho_2}\qth{\int \frac{(\rmd P)^2}{ (1-a)\bE_{P'\sim \rho_2}[\rmd P']} } - 1 \\
&= \bE_{P\sim \rho_1}\qth{\chi^2(P\| \bE_{P'\sim \rho_1}[P'] ) } + \bE_{P\sim \rho_2}\qth{\chi^2(P\| \bE_{P'\sim \rho_2}[P'] ) } + 1 \\
&\le \sfC_{\chi^2}\pth{\calP_1} + \sfC_{\chi^2}\pth{\calP_2} + 1.
\end{align*}
Taking the supremum over $\rho\in \Delta(\calP_1\cup \calP_2)$ completes the proof. 
\end{proof}

Next we prove \Cref{cor:specific_families}. 

\ifthenelse{\boolean{arxiv}}{\paragraph{Gaussian family $\calP=\sth{ \calN(\theta,1): |\theta|\le \mu }$.}}{\vspace{0.2cm} \noindent \emph{Gaussian family $\calP=\sth{ \calN(\theta,1): |\theta|\le \mu }$.}} Since
\begin{align*}
\chi^2\pth{ \calN(\theta,1) \| \calN(\theta',1) } = \exp\pth{(\theta-\theta')^2} - 1, 
\end{align*}
it is clear that $\Diam = \exp(4\mu^2) - 1$. We show that $\Capa = O(\mu \wedge \mu^2)$. For $\mu\le 1$, we simply use $\Capa\le \Diam = O(\mu^2)$. For $\mu>1$, we split the class $\calP$ into $\calP \subseteq \cup_{m\in \mathbb{Z}, |m|\le \mu+1} \calP_m$, with $\calP_m := \sth{\calN(\theta,1): m\le \theta<m+1 }$. Clearly $\sfC_{\chi^2}(\calP_m) \le \sfD_{\chi^2}(\calP_m) = e - 1 = 
O(1)$, so \Cref{lemma:chi2-capacity} yields
\begin{align*}
\Capa \le \sum_{m\in \mathbb{Z}, |m|\le \mu+1} \sfC_{\chi^2}(\calP_m) + \left|\sth{m\in \mathbb{Z}: |m|\le \mu+1}\right| - 1 = O(\mu). 
\end{align*}
Then the claimed result follows from \eqref{eq:upper_bound_1} and \eqref{eq:upper_bound_3}. 

\ifthenelse{\boolean{arxiv}}{\paragraph{Gaussian family with finite support $\calP=\sth{ \calN(\theta,1): |\theta|\le \mu, \theta\in \Theta }$, $|\Theta|<\infty$.}}{\vspace{0.2cm} \noindent \emph{Gaussian family with finite support $\calP=\sth{ \calN(\theta,1): |\theta|\le \mu, \theta\in \Theta }$, $|\Theta|<\infty$.}} We only need to prove an additional upper bound $\Capa \le |\Theta|-1$. In fact, let $\Theta = \{\theta_1,\ldots,\theta_m\}$, \Cref{lemma:chi2-capacity} leads to
\begin{align*}
\Capa \le \sum_{i=1}^m \sfC_{\chi^2}\pth{ \sth{\calN(\theta_i,1)} } + m - 1 = m - 1.
\end{align*}

\ifthenelse{\boolean{arxiv}}{\paragraph{Bernoulli family $\calP = \{\Bern(p): p\in [\varepsilon,1-\varepsilon]\}$.}}{\vspace{0.2cm} \noindent \emph{Bernoulli family $\calP = \{\Bern(p): p\in [\varepsilon,1-\varepsilon]\}$.}} For $\varepsilon\ge \frac{1}{4}$, 
\begin{align*}
\Capa\le \Diam = \chi^2\pth{\Bern(\varepsilon) \| \Bern(1-\varepsilon)} = \frac{(1-2\varepsilon)^2}{\varepsilon(1-\varepsilon)} = O\pth{(1-2\varepsilon)^2}, 
\end{align*}
so that \Cref{eq:upper_bound_1} gives the first upper bound. For the second upper bound, we show that $\Capa \le 1-2\varepsilon$ for general $\varepsilon>0$, so that \eqref{eq:upper_bound_1} gives
\begin{align*}
\chi^2(\bP_n\|\bQ_n)\le 10\sum_{\ell=2}^n (1-2\varepsilon)^{\ell} \le 10\sum_{\ell=2}^\infty (1-2\varepsilon)^{\ell} = O\pth{\frac{1}{\varepsilon}}. 
\end{align*}
To see this, for any prior $\rho \in \Delta([\varepsilon, 1-\varepsilon])$, we upper bound the $\chi^2$ mutual information as
\begin{align*}
    I_{\chi^2}(p; X) &= \bE_{p\sim \rho}\qth{\chi^2\pth{\Bern(p) \| \Bern(\bE_{p'\sim \rho}[p'] } } \\
    &= \frac{\bE_{p\sim \rho}[p^2]}{\bE_{p\sim \rho}[p]} + \frac{\bE_{p\sim \rho}[(1-p)^2]}{\bE_{p\sim \rho}[1-p]} - 1 \\
    &\le 1-\varepsilon + 1-\varepsilon - 1 = 1-2\varepsilon. 
\end{align*}
Taking the supremum over the prior $\rho$ leads to $\Capa \le 1-2\varepsilon$. 

\ifthenelse{\boolean{arxiv}}{\paragraph{Poisson family $\calP = \sth{\Poi(\lambda): \lambda \in [0,M]}$.}}{\vspace{0.2cm} \noindent \emph{Poisson family $\calP = \sth{\Poi(\lambda): \lambda \in [0,M]}$.}} First, we have $\AffH = e^{M}$, for
\begin{align*}
\sup_{P_1, P_2\in \calP} \frac{1}{\int \sqrt{\rmd P_1\rmd P_2}} = \frac{1}{\int \sqrt{\rmd \Poi(0) \rmd \Poi(M)}} = e^{M/2}. 
\end{align*}
Next we prove $\Capa = O(\sqrt{M}\wedge M)$, by considering the cases $M\le 1$ and $M>1$. For $M\le 1$, pick any prior $\rho$ on $[0,M]$. We upper bound the $\chi^2$ mutual information as
\begin{align*}
I_{\chi^2}(\lambda; X) &= \bE_{\lambda\sim \rho}\qth{\chi^2\pth{\Poi(\lambda) \| \bE_{\lambda'\sim \rho}[\Poi(\lambda')] } } \\
&= \frac{\bE_{\lambda\sim \rho}\qth{ \bP(\Poi(\lambda)=0)^2 } }{\bE_{\lambda\sim \rho}\qth{ \bP(\Poi(\lambda)=0) }} + \sum_{k=1}^\infty \frac{\bE_{\lambda\sim \rho}\qth{ \bP(\Poi(\lambda)=k)^2 } }{\bE_{\lambda\sim \rho}\qth{ \bP(\Poi(\lambda)=k) }} - 1 \\
&\stepc{\le} 1 + \sum_{k=1}^\infty \bP(\Poi(M) = k) - 1 = 1 - e^{-M} \le M, 
\end{align*}
where (c) uses the monotonicity $\bP(\Poi(\lambda)=k) \le \bP(\Poi(M) = k)$ for all $\lambda\le M\le 1$ and $k\ge 1$. Taking the supremum over the prior $\rho$ gives that $\Capa\le M$. For $M>1$, we write $\calP = \cup_{i=1}^m \calP_i$, with
\begin{align*}
    \calP_i := \sth{ \Poi(\lambda): (i-1)^2 \le \lambda \le i^2 \wedge M }. 
\end{align*}
Clearly we can choose $m = O(\sqrt{M})$ to ensure that $\calP = \cup_{i=1}^m \calP_i$. In addition, our previous argument shows that $\sfC_{\chi^2}(\calP_1) \le 1$, and for $i\ge 2$, 
\begin{align*}
\sfC_{\chi^2}(\calP_i) \le \sfD_{\chi^2}(\calP_i) \le \max_{(i-1)^2\le \lambda_1, \lambda_2\le i^2} \exp\pth{\frac{(\lambda_1-\lambda_2)^2}{\lambda_2}}- 1 \le \exp\pth{\frac{ (i^2 - (i-1)^2)^2 }{(i-1)^2}} - 1 = O(1).  
\end{align*}
Therefore, by \Cref{lemma:chi2-capacity}, we have
\begin{align*}
\Capa \le \sum_{i=1}^m \sfC_{\chi^2}(\calP_i) + m - 1 = O(\sqrt{M}). 
\end{align*}
Then the claimed result follows from \eqref{eq:upper_bound_1} and \eqref{eq:upper_bound_2}.

\subsection{Proof of \Cref{thm:deFinetti}}

Let $\overline{P} := \frac{1}{n}\sum_{i=1}^n P_i$. Expanding the likelihood ratio gives
\begin{align*}
\chi^2(\bP_{k,n} \| \bQ_{k,n}) + 1 
&= \bE_{X_1,\ldots,X_k\sim \overline{P}}\qth{ \pth{\bE_{\pi\sim\Unif(S_n)}\qth{ \prod_{i=1}^k \frac{\rmd P_{\pi(i)}}{\rmd \overline{P}}(X_i)} }^2 } \\
&\stepa{=} \bE_{\pi,\pi'\sim \Unif(S_n)}\qth{ \prod_{i=1}^k \bE_{X_i\sim \overline{P}}\qth{\frac{\rmd P_{\pi(i)}}{\rmd \overline{P}}(X_i)\frac{\rmd P_{\pi'(i)}}{\rmd \overline{P}}(X_i)} } \\
&\stepb{=} \bE_{\pi,\pi'\sim \Unif(S_n)} \qth{n^k\prod_{i=1}^k A_{\pi(i),\pi'(i)} } \\
&\stepc{=} \frac{n^k}{\binom{n}{k}^2 k!}\sum_{\substack{ S,S'\subseteq [n] \\ |S| = |S'| = k }} \Perm(A_{S,S'}), 
\end{align*}
where in (a) we introduce an independent copy $\pi'\sim \Unif(S_n)$ of $\pi$, (b) recalls the definition of $A$ in \eqref{eq:A-matrix}, and (c) uses $\bP(\pi([k]) = S) = \bP(\pi'([k])=S') = \binom{n}{k}^{-1}$ for all subsets $S,S'$ of size $k$ and averages over $k!$ bijections between $S$ and $S'$. Next we relate $A$ to its centered version $\overline{A} = A - \frac{1}{n}{\bf 1}{\bf 1}^\top$. Let $J = {\bf 1}{\bf 1}^\top$ denote the all-ones matrix, then
\begin{align*}
\sum_{\substack{ S,S'\subseteq [n] \\ |S| = |S'| = k }} \Perm(A_{S,S'}) &= \sum_{\substack{ S,S'\subseteq [n] \\ |S| = |S'| = k }} \Perm\pth{\overline{A}_{S,S'} + \frac{J_{S,S'}}{n}} \\
&= \sum_{\substack{ S,S'\subseteq [n] \\ |S| = |S'| = k }} \sum_{\substack{ T\subseteq S, T'\subseteq S' \\ |T| = |T'|}} \Perm(\overline{A}_{T,T'}) \Perm\pth{\frac{J_{S\backslash T, S'\backslash T'}}{n}} \\
&= \sum_{\substack{ S,S'\subseteq [n] \\ |S| = |S'| = k }} \sum_{\ell=0}^k \sum_{\substack{ T\subseteq S, T'\subseteq S' \\ |T| = |T'|=\ell}} \Perm(\overline{A}_{T,T'}) \frac{(k-\ell)!}{n^{k-\ell}} \\
&\stepd{=} \sum_{\ell=0}^k \sum_{\substack{ T,T'\subseteq [n] \\ |T| = |T'|=\ell}} \binom{n-\ell}{k-\ell}^2 \Perm(\overline{A}_{T,T'}) \frac{(k-\ell)!}{n^{k-\ell}}, 
\end{align*}
where (d) swaps the sum and counts the number of $S\supseteq T, S'\supseteq T'$ of size $k$. After some algebra, the above two identities give that
\begin{align*}
\chi^2(\bP_{k,n} \| \bQ_{k,n}) + 1 &= \frac{n^{\ell} k!}{[n!]^2}\sum_{\ell=0}^k \frac{[(n-\ell)!]^2}{(k-\ell)!}  \sum_{\substack{ T,T'\subseteq [n] \\ |T| = |T'|=\ell}} \Perm(\overline{A}_{T,T'}) \\
&\stepe{=} \frac{n^{\ell} k!}{[n!]^2}\sum_{\ell=0}^k \frac{[(n-\ell)!]^2}{(k-\ell)!}\cdot \frac{n!}{(n-\ell)! n^{\ell}}S_{\ell} = \sum_{\ell=0}^k \frac{\binom{k}{\ell}}{\binom{n}{\ell}}S_{\ell}, 
\end{align*}
where (e) follows from the identity between $T_{\ell}$ and $S_{\ell}$ in \Cref{lemma:RST}. 

Finally, to prove the first upper bound, by \eqref{eq:polynomial-eigenvalue} we have
\begin{align*}
S_{\ell} &= \frac{n^n}{n!}\sum_{\substack{(\ell_2,\ldots,\ell_n)\in \naturals^{n-1} \\ \sum_{i=2}^n \ell_i = \ell}} \frac{\Perm(U_{n-\ell,\ell_2,\ldots,\ell_n})^2}{(n-\ell)!\ell_2!\cdots \ell_n!}\lambda_2^{\ell_2}\cdots \lambda_n^{\ell_n} \\
&\stepf{\le} \sum_{\substack{(\ell_2,\ldots,\ell_n)\in \naturals^{n-1} \\ \sum_{i=2}^n \ell_i = \ell}} \frac{10\cdot \ell!(n-\ell)!}{(n-\ell)!\ell_2!\cdots \ell_n!}\lambda_2^{\ell_2}\cdots \lambda_n^{\ell_n}  = 10\pth{\sum_{i=2}^n \lambda_i}^{\ell} \le 10\Capa^{\ell}, 
\end{align*}
where (f) follows from the same program (\Cref{lemma:banach} and \Cref{thm:ESP}) in \Cref{sec:basis}, and the last inequalty uses \Cref{lemma:A-property}. As $S_0 = 1$, $S_1 =0$, 
\begin{align*}
\chi^2(\bP_{k,n} \| \bQ_{k,n}) \le \sum_{\ell=2}^k \pth{\frac{k}{n}}^\ell \cdot 10 \Capa^{\ell} \le 20\pth{\frac{k\Capa}{n}}^2, \quad \text{if }\frac{k\Capa}{n}\le \frac{1}{2}. 
\end{align*}
For the second upper bound, we use \eqref{eq:polynomial-eigenvalue} and $S_\ell\ge 0$ to conclude
\begin{align*}
\chi^2(\bP_{k,n} \| \bQ_{k.n}) = \sum_{\ell=2}^k \frac{\binom{k}{\ell}}{\binom{n}{\ell}}S_{\ell} \le \frac{k(k-1)}{n(n-1)}\sum_{\ell=2}^n S_{\ell} = \frac{k(k-1)}{n(n-1)}\chi^2(\bP_{n} \| \bQ_{n}).  
\end{align*} 
Now the claimed result follows from the upper bound of $\chi^2(\bP_{n} \| \bQ_{n})$ in \Cref{thm:main}. 

\subsection{Proof of \Cref{thm:two_mixtures}}\label{append:proof_two_mixtures}
By \Cref{lemma:UB-individual-sum}, we have
\begin{align*}
    \sum_{\ell=1}^n \ell S_{\ell-1} &\le 3\sum_{\ell=1}^n \ell^{3/2} \sum_{\substack{(\ell_2,\dots,\ell_{n-1})\in \naturals^{n-2} \\ \sum_{i=2}^{n-1} \ell_i = \ell-1}} \lambda_2^{\ell_2}\cdots \lambda_{n-1}^{\ell_{n-1}} \\
    &\stepa{\le} 3\sum_{\ell=1}^n \sum_{\substack{(\ell_2,\dots,\ell_{n-1})\in \naturals^{n-2} \\ \sum_{i=2}^{n-1} \ell_i = \ell-1}} (\ell_2+1)^{3/2}\lambda_2^{\ell_2}\cdots (\ell_{n-1}+1)^{3/2}\lambda_{n-1}^{\ell_{n-1}} \\
    &\le 3\prod_{i=2}^{n-1}\pth{\sum_{\ell=0}^\infty (\ell+1)^{3/2}\lambda_i^\ell} \stepb{\le} 3\prod_{i=2}^{n-1}\pth{\frac{1}{2}\sum_{\ell=0}^\infty (\ell+2)(\ell+1)\lambda_i^\ell} \stepc{=} 3\prod_{i=2}^{n-1}\frac{1}{(1-\lambda_i)^3}, 
\end{align*}
where (a) is due to $\prod_{i=2}^{n-1}(1+\ell_i)\ge 1 + \sum_{i=2}^{n-1} \ell_i = \ell$, (b) uses $2\sqrt{\ell+1}\le \ell+2$ for all $\ell\ge 0$, and (c) follows from the identity
\begin{align*}
\sum_{\ell=0}^\infty (\ell+2)(\ell+1)x^{\ell} = \frac{\rmd^2}{\rmd x^2}\sum_{\ell=0}^\infty x^{\ell+2} = \frac{2}{(1-x)^3}, \qquad \text{for } |x| < 1. 
\end{align*}
By the same arguments in \Cref{append:proof_thm_main}, \Cref{lemma:UB-empirical-bayes} leads to
\begin{align*}
\TV(\bP_n, \bP_n')^2 \le \frac{3\Diam}{n}(e\AffH)^{3\Capa}, 
\end{align*}
which establishes the first claim of \Cref{thm:two_mixtures}. The second claim of \Cref{thm:two_mixtures} simply follows from the Cauchy--Schwarz inequality. 

\subsection{Proof of \Cref{lemma:gaussian-seq-model}}
Let $\bQ$ be the joint distribution of $(\theta, X)$ under the i.i.d. prior $\theta\sim \pth{(1-\alpha)\delta_0 + \alpha\delta_{\mu_0}}^{\otimes n}$, and $\bP$ be the counterpart under the permutation prior. By the arguments in \cite[Proof of Theorem 1]{zhang2012minimax}, to establish the claimed result, it suffices to prove that
\begin{align}\label{eq:posterior_target}
\bE_\bP\qth{ \bE_\bP[\theta_1 | X^n]^2 } = o\pth{\frac{s\mu^2}{n}}. 
\end{align}
For completeness we show how \eqref{eq:posterior_target} implies the target Bayes risk lower bound. Let $\mu_0 := \sqrt{1-\varepsilon}\mu$. For $p_1\in (0,1)$, consider the minimizer $a^\star$ of the map $a\mapsto f(a):=p_1|\mu_0-a|^q + (1-p_1)|a|^q$. Clearly $a^\star\in [0,\mu_0]$, and since $(1-p_1)(a^\star)^q\le f(a^\star)\le f(0)=p_1\mu_0^q$, we conclude that $a^\star\le (\frac{p_1}{1-p_1})^{1/q}\mu_0$. Consequently, if $p_1\le \varepsilon$, then 
\begin{align*}
f(a^\star) \ge p_1(1-\varepsilon_1)^q \mu_0^q, \quad \text{with } \varepsilon_1 := \pth{\frac{\varepsilon}{1-\varepsilon}}^{1/q}. 
\end{align*}
Therefore, letting $p_1(X^n):= \bP(\theta_1=\mu_0|X^n)$ be the posterior probability of $\theta_1 = \mu_0$, the Bayes $\ell^q$ risk for estimating $\theta_1$ under $\bP$ is
\begin{align*}
\bE_\bP\qth{ \min_{a} \pth{p_1(X^n) |\mu-a|^q + (1-p_1(X^n))|a|^q }} \ge (1-\varepsilon_1)^q\mu_0^q\cdot \bE_\bP\qth{ p_1(X^n) \mathbbm{1}(p_1(X^n)\le \varepsilon) }. 
\end{align*}
To proceed, note that $\bE_\bP[p_1(X^n)] = \bP(\theta_1=\mu_0)=\frac{\lfloor s\rfloor}{n}$, and $\bE_\bP[p_1(X^n)^2] = o(\frac{s}{n})$ by \eqref{eq:posterior_target}. Therefore, the Bayes $\ell^q$ risk for estimating $\theta_1$ is further lower bounded by
\begin{align*}
&(1-\varepsilon_1)^q\mu_0^q\cdot\pth{\frac{\lfloor s \rfloor}{n} - \bE_\bP[p_1(X^n)\mathbbm{1}(p_1(X^n)>\varepsilon)]} \\
&\ge (1-\varepsilon_1)^q\mu_0^q\cdot\pth{\frac{\lfloor s \rfloor}{n} - \frac{\bE_\bP[p_1(X^n)^2]}{\varepsilon}} = (1-\varepsilon_1)^q\mu_0^q\cdot\frac{\lfloor s \rfloor}{n} \pth{1- \frac{o(1)}{\varepsilon}}, 
\end{align*}
which is $(1-o(1))(1-c(\varepsilon))\mu^{q-p}R^p$ with $c(\varepsilon)\to 0$ as $\varepsilon\to 0^+$, as claimed. 

In the sequel we prove \eqref{eq:posterior_target}. Since
\begin{align*}
\bE_\bP\qth{ (\theta_1 - \bE_\bP[\theta_1|X^n])^2 } = \bE_\bP[\theta_1^2] - \bE_\bP\qth{ \bE_\bP[\theta_1 | X^n]^2 } = \frac{\lfloor s\rfloor}{n}\mu_0^2 -  \bE_\bP\qth{ \bE_\bP[\theta_1 | X^n]^2 }, 
\end{align*}
it is equivalent to proving that $\bE_\bP\qth{ (\theta_1 - \bE_\bP[\theta_1|X^n])^2 } \ge (1-o(1))\frac{s\mu_0^2}{n}$. Let $\delta = \delta(\varepsilon) > 0$ be any fixed constant such that
\begin{align}\label{eq:delta}
(1+\delta)(1-\varepsilon) < 1 - \frac{\varepsilon}{2}. 
\end{align}
Our proof is based on the following I-MMSE formula \cite{guo2005mutual}: for an SNR parameter $\lambda\ge 0$ and $X^\lambda := \sqrt{\lambda}\theta + N$ with $N\sim \calN(0,I_n)$ independent of $\theta$, for any prior distribution of $\theta$ it holds that
\begin{align*}
I(\theta; X^{\lambda}) = \frac{1}{2}\int_0^\lambda \bE\qth{ \|\theta - \bE\qth{\theta | X^t }\!\|_2^2 } \rmd t =: \frac{1}{2}\int_0^\lambda \mathrm{mmse}(t) \rmd t.
\end{align*}
We denote by $I_{\bP}(\theta;X^\lambda)$ and $I_{\bQ}(\theta;X^{\lambda})$ the mutual information under the permutation prior and the i.i.d. prior, respectively. Similarly we use the notations $(\mathrm{mmse}_\bP(t), \mathrm{mmse}_\bQ(t))$ for the minimum mean squared errors (MMSEs) under these priors. Note that our target is now equivalent to showing that $\mathrm{mmse}_\bP(1) \ge (1-o(1))s\mu_0^2$. 

By the I-MMSE formula, 
\begin{align*}
\frac{1}{2}\int_1^{1+\delta} \pth{\mathrm{mmse}_{\bP}(t) - \mathrm{mmse}_{\bQ}(t)} \rmd t = I_{\bP}(\theta;X^{1+\delta}) - I_{\bQ}(\theta;X^{1+\delta}) - (I_{\bP}(\theta;X^1) - I_{\bQ}(\theta;X^1)). 
\end{align*}
For the difference in the mutual information, we establish the following inequality: 
\begin{align}\label{eq:mutual_info_difference}
0\le I_{\bQ}(\theta;X^\lambda) - I_{\bP}(\theta;X^\lambda) \le 10\sum_{\ell=2}^n I_{\chi^2}(\theta_1; X_1^\lambda)^{\ell}, 
\end{align}
where $I_{\chi^2}$ is the $\chi^2$ mutual information in \Cref{defn:capacity_diam}. To show \eqref{eq:mutual_info_difference}, dropping the superscript $\lambda$ for simplicity, we have
\begin{align*}
I_{\bQ}(\theta;X) - I_{\bP}(\theta;X) &= nI_{\bQ}(\theta_1; X_1) - \bE_{\bP_{\theta,X}}\qth{\log \frac{\prod_{i=1}^n \rmd \bP_{X_i|\theta_i}}{\rmd \bP_X}} \\
&= nI_{\bQ}(\theta_1; X_1) - \bE_{\bP_{\theta,X}}\qth{\log \frac{\prod_{i=1}^n \rmd \bP_{X_i|\theta_i}}{\rmd \bQ_X}} + \KL(\bP_X \| \bQ_X) \\
&= nI_{\bQ}(\theta_1; X_1) - n \bE_{\bP_{\theta_1,X_1}}\qth{\log \frac{\rmd \bP_{X_1|\theta_1}}{\rmd \bQ_{X_1}}} + \KL(\bP_X \| \bQ_X) \\
&= \KL(\bP_X \| \bQ_X), 
\end{align*}
where the last step uses that $\bP_{\theta_1,X_1} = \bQ_{\theta_1,X_1}$. Since $\bP_X$ and $\bQ_X$ are the permutation mixture and the i.i.d. approximation, respectively, an intermediate step in the proof of the upper bound \eqref{eq:upper_bound_1} (cf. \Cref{subsec:general-case}) shows that
\begin{align*}
0\le \KL(\bP_X \| \bQ_X) \le \chi^2(\bP_X \| \bQ_X) \le 10\sum_{\ell=2}^n I_{\chi^2}(\theta_1; X_1)^{\ell}, 
\end{align*}
establishing the target inequality \eqref{eq:mutual_info_difference}. 

Based on \eqref{eq:mutual_info_difference}, using the decreasing property of $t\mapsto \mathrm{mmse}(t)$ under both priors, we have
\begin{align*}
\frac{\delta}{2}\pth{ \mathrm{mmse}_{\bP}(1) - \mathrm{mmse}_{\bQ}(1+\delta)} \ge \frac{1}{2}\int_1^{1+\delta} \pth{\mathrm{mmse}_{\bP}(t) - \mathrm{mmse}_{\bQ}(t)} \rmd t \ge -10\sum_{\ell=2}^n I_{\chi^2}(\theta_1; X_1^{1+\delta})^{\ell},
\end{align*}
which rearranges to
\begin{align}\label{eq:mmse_inequality}
\mathrm{mmse}_{\bP}(1) \ge \mathrm{mmse}_{\bQ}(1+\delta) - \frac{20}{\delta}\sum_{\ell=2}^n I_{\chi^2}(\theta_1; X_1^{1+\delta})^{\ell}. 
\end{align}
The quantity $\mathrm{mmse}_{\bQ}(1+\delta)$, this quantity corresponds to the Bayes risk under a rescaled i.i.d. prior $\pth{(1-\alpha)\delta_0 + \alpha\delta_{\mu_1}}^{\otimes n}$ for the Gaussian mean vector, with
\begin{align*}
    \mu_1 := \sqrt{(1+\delta)(1-\varepsilon)}\mu = \sqrt{2(1+\delta)(1-\varepsilon)\log(1/R^p)}, \qquad \alpha := \frac{\lfloor s\rfloor}{n}. 
\end{align*}
As $\mu_1 < \mu$ by \eqref{eq:delta} and $s=\omega(1)$, this Bayes risk has been characterized in \cite[Section 6]{donoho1994minimax} that 
\begin{align*}
    \mathrm{mmse}_{\bQ}(1+\delta) \ge \frac{(1-o(1))s\mu_1^2}{1+\delta} = (1-o(1))s\mu_0^2. 
\end{align*} 
Here the $(1+\delta)^{-1}$ factor accounts for discrepancy in estimating $\theta$ and $\sqrt{1+\delta}\cdot \theta$. Finally, since $\delta = \delta(\varepsilon) > 0$ is a fixed constant and $s\mu_0^2 = \omega(1)$, \eqref{eq:mmse_inequality} proves the target claim $\mathrm{mmse}_\bP(1) \ge (1-o(1))s\mu_0^2$ as long as we show that
\begin{align}\label{eq:chi2-GSM}
I_{\chi^2}(\theta_1; X_1^{1+\delta}) = o(1). 
\end{align}

To prove \eqref{eq:chi2-GSM}, note that as $R = o(1)$, for $n$ large enough we have $\mu\ge 1$, so that
\begin{align*}
\alpha \le \frac{s}{n} = \frac{R^p}{\mu^p} \le R^p = \exp\pth{-\frac{\mu_1^2}{2(1+\delta)(1-\varepsilon)}} \overset{\prettyref{eq:delta}}{<} \exp\pth{-\frac{(1+\frac{\varepsilon}{2})\mu_1^2}{2}}. 
\end{align*}
Consequently,
\begin{align*}
I_{\chi^2}(\theta_1;X_1^{1+\delta}) &\stepa{=} \alpha(1-\alpha)\int \frac{(\rmd \calN(\mu_1,1) - \rmd \calN(0,1))^2}{\rmd [\alpha \calN(\mu_1,1) + (1-\alpha)\calN(0,1)]} \\
&\stepb{=} \alpha(1-\alpha)\bE_{Z\sim \calN(0,1)}\qth{\frac{\pth{\exp(\mu_1 Z - \mu_1^2/2)-1}^2}{\alpha \exp(\mu_1 Z - \mu_1^2/2) + 1-\alpha}}, 
\end{align*}
where (a) uses the definition of the $\chi^2$ mutual information, and (b) follows from a simple change of measure. To proceed, note that 
\begin{align*}
&\bE_{Z\sim \calN(0,1)}\qth{\frac{\pth{\exp(\mu_1 Z - \mu_1^2/2)-1}^2}{\alpha \exp(\mu_1 Z - \mu_1^2/2) + 1-\alpha} \indc{Z\le (1+\frac{\varepsilon}{4})\mu_1}} \\
&\le \frac{1}{1-\alpha}\pth{1 + \bE_{Z\sim \calN(0,1)}\qth{\exp(2\mu_1Z - \mu_1^2) \indc{Z\le (1+\frac{\varepsilon}{4})\mu_1} }} \\
&\stepc{=} \frac{1}{1-\alpha}\pth{1 + e^{\mu_1^2}\cdot\bP\pth{\calN(2\mu_1,1) \le \pth{1+\frac{\varepsilon}{4}}\mu_1 }} \\
&\stepd{\le} \frac{1}{1-\alpha}\pth{1 + \exp\pth{\mu_1^2 - \frac{1}{2}\pth{1-\frac{\varepsilon}{4}}^2\mu_1^2}} = \frac{1}{1-\alpha}\pth{1 + \exp\pth{\pth{\frac{1}{2}+\frac{\varepsilon}{4}-\frac{\varepsilon^2}{32}}\mu_1^2}}, 
\end{align*}
where (c) is due to a change of measure, and (d) uses $\bP(\calN(0,1)\ge t)\le \exp(-t^2/2)$ for every $t\ge 0$. For the remaining quantity, we similarly have
\begin{align*}
&\bE_{Z\sim \calN(0,1)}\qth{\frac{\pth{\exp(\mu_1 Z - \mu_1^2/2)-1}^2}{\alpha \exp(\mu_1 Z - \mu_1^2/2) + 1-\alpha} \indc{Z> (1+\frac{\varepsilon}{4})\mu_1}} \\
&\le \frac{1}{\alpha} \bE_{Z\sim \calN(0,1)}\qth{\exp(\mu_1 Z - \mu_1^2/2) \indc{Z> (1+\frac{\varepsilon}{4})\mu_1}} \\
&= \frac{1}{\alpha}\bP\pth{\calN(\mu_1,1) > \pth{1+\frac{\varepsilon}{4}}\mu_1 } \le \frac{1}{\alpha}\exp\pth{-\frac{\varepsilon^2 \mu_1^2}{32}}. 
\end{align*}
Since $\alpha \le \exp\pth{-\frac{(1+\varepsilon/2)\mu_1^2}{2}}$, we have
\begin{align*}
I_{\chi^2}(\theta_1; X_1^{1+\delta}) \le \alpha + 2\exp\pth{-\frac{\varepsilon^2 \mu_1^2}{32}} = o(1), 
\end{align*}
as claimed. The proof of \eqref{eq:chi2-GSM} is then complete.

\subsection{Proof of \Cref{lemma:privacy}}
It is clear that $\bP_n$ and $\bP_n'$ are precisely the permutation mixtures in \Cref{thm:two_mixtures}, where the Laplace and Gaussian mechanisms correspond to the distribution class
\begin{align*}
\calP_1 = \sth{ x+\mathrm{Lap}\pth{\frac{1}{\varepsilon}}: x\in [0,1] }, \quad \calP_2 = \sth{ x+\calN\pth{0, \frac{1}{\varepsilon^2}}: x\in [0,1] }, 
\end{align*}
respectively. Straightforward computations yield
\begin{align*}
\mathsf{D}_{\chi^2}(\calP_1) = \frac{2e^{\varepsilon}+e^{-2\varepsilon}}{3} - 1 \le \varepsilon^2, \quad \text{for } \varepsilon\in [0,1]. 
\end{align*}
Since $\Capa \vee (\Delta_{H^2}(\calP)-1) \le \Diam$, \Cref{thm:two_mixtures} yields
\begin{align*}
\TV(\bP_n, \bP_n')^2 \le \frac{3\varepsilon^2[e(1+\varepsilon^2)]^{3\varepsilon^2}}{n} = O\pth{\frac{\varepsilon^2}{n}}, 
\end{align*}
establishing the results for the Laplace mechanism. For the Gaussian mechanism, we similarly have
\begin{align*}
\mathsf{D}_{\chi^2}(\calP_2) = e^{\varepsilon^2}-1 \le 2\varepsilon^2, \quad \text{for } \varepsilon\in [0,1], 
\end{align*}
and the rest follows from \Cref{thm:two_mixtures}. 

\subsection{Proof of \Cref{cor:EB}}
We first recall the alternative representations of $r^{\mathrm{S}}(\theta)$ and $r^{\mathrm{PI}}(\theta)$ in \cite{greenshtein2009asymptotic} (see also \cite{weinstein2021permutation}). Consider a ``postulated Bayes model'' where $\vartheta$ is a uniformly random permutation of a fixed vector $\theta$, and the observation $X\sim \otimes_{i=1}^n P_{\vartheta_i}$ conditioned on $\vartheta$. Due to the permutation-invariant structure of both $\calD^{\mathrm{S}}$ and $\calD^{\mathrm{PI}}$, it holds that
\begin{align*}
r^{\mathrm{S}}(\theta) = \inf_{\widehat{\vartheta}\in \calD^{\mathrm{S}}} \bE\qth{L(\vartheta, \widehat{\vartheta})}, \quad r^{\mathrm{PI}}(\theta) = \inf_{\widehat{\vartheta}\in \calD^{\mathrm{PI}}} \bE\qth{L(\vartheta, \widehat{\vartheta})}. 
\end{align*}

We next show that
\begin{align}\label{eq:risk-difference}
r^{\mathrm{S}}(\theta) - r^{\mathrm{PI}}(\theta) \le \frac{M}{n} \sum_{i=1}^n \bE\qth{\TV(P_{\vartheta_i | X_i }, P_{\vartheta_i | X})}. 
\end{align}
In fact, 
\begin{align*}
r^{\mathrm{S}}(\theta) &= \inf_{\widehat{\vartheta}\in \calD^{\mathrm{S}}} \bE\qth{L(\vartheta, \widehat{\vartheta})} = \inf_{\Delta(\cdot)} \frac{1}{n}\sum_{i=1}^n \bE\qth{\ell(\vartheta_i, \Delta(X_i))} = \bE\qth{\frac{1}{n}\sum_{i=1}^n \inf_{u} \bE_{\vartheta_i | X_i}\qth{\ell(\vartheta_i, u) }}, \\
r^{\mathrm{PI}}(\theta) &= \inf_{\widehat{\vartheta}\in \calD^{\mathrm{PI}}} \bE\qth{L(\vartheta, \widehat{\vartheta})} =\inf_{\widehat{\vartheta}\in \calD^{\mathrm{PI}}}  \frac{1}{n}\sum_{i=1}^n \bE\qth{\ell(\vartheta_i, \widehat{\vartheta}_i(X))} = \bE\qth{\frac{1}{n}\sum_{i=1}^n \inf_{u} \bE_{\vartheta_i | X}\qth{\ell(\vartheta_i, u) }}, 
\end{align*}
and the optimal decision rules are the Bayes optimal decision rules under the posterior distributions $P_{\vartheta_i|X_i}$ and $P_{\vartheta_i|X}$, respectively, for the loss $\ell$. Consequently, as $0\le \ell(\cdot,\cdot)\le M$, 
\begin{align*}
r^{\mathrm{S}}(\theta) - r^{\mathrm{PI}}(\theta) &\le \frac{1}{n}\sum_{i=1}^n \bE\qth{ \sup_u \left| \bE_{\vartheta_i | X_i}\qth{\ell(\vartheta_i, u) } - \bE_{\vartheta_i | X}\qth{\ell(\vartheta_i, u) } \right| } \\
&\le \frac{M}{n} \sum_{i=1}^n \bE\qth{\TV(P_{\vartheta_i | X_i }, P_{\vartheta_i | X})}, 
\end{align*}
establishing \eqref{eq:risk-difference}. To proceed, note that by the Bayes rule, we have
\begin{align*}
\frac{\rmd P_{\vartheta_i | X_i}}{\rmd P_{\vartheta_i | X}} = \frac{\rmd P_{X_{\backslash i} | X_i} }{\rmd P_{X_{\backslash i} | \vartheta_i}}, 
\end{align*}
and therefore
\begin{align*}
    \bE\qth{\TV(P_{\vartheta_i | X_i }, P_{\vartheta_i | X})} &= \bE_{\vartheta_i, X}\qth{\left| \frac{\rmd P_{X_{\backslash i} | X_i} }{\rmd P_{X_{\backslash i} | \vartheta_i}} - 1\right|} = \bE_{\vartheta_i, X_i}\qth{\TV(P_{X_{\backslash i} | X_i}, P_{X_{\backslash i} | \vartheta_i})} \\
    &\stepa{\le} \bE_{X_i}\bE_{\vartheta_i, \vartheta_i'|X_i}\qth{ \TV(P_{X_{\backslash i} | \vartheta_i'}, P_{X_{\backslash i} | \vartheta_i}) }\le \max_{\vartheta_i, \vartheta_i'} \TV(P_{X_{\backslash i} | \vartheta_i}, P_{X_{\backslash i} | \vartheta_i'}) \\
    &\stepb{\le} \sqrt{\frac{3\Diam(e\AffH)^{3\Capa}}{n-1}}. 
\end{align*}
Here (a) follows from convexity and $P_{X_{\backslash i} | X_i} = \bE_{\vartheta_i'|X_i}\qth{P_{X_{\backslash i} | \vartheta_i'}}$, where $\vartheta_i'$ is an independent copy of $\vartheta_i$ given $X_i$. For (b), note that $P_{X_{\backslash i} | \vartheta_i}$ is the permutation mixture based on $\{P_{\theta_1},\dots,P_{\theta_n}\} \backslash \{P_{\vartheta_i}\}$, so that \Cref{thm:two_mixtures} gives the claimed upper bound. Plugging this upper bound into \eqref{eq:risk-difference} and using $n\ge 2$ leads to the claimed corollary. 

\subsection{Proof of \Cref{thm:permutation_prior}}
For the first inequality \eqref{eq:mutual_info_1}, the triangle inequality applied to the separable loss yields
\begin{align*}
|\bE_{\bP} [L(\vartheta,\widehat{\vartheta})] - \bE_{\bQ} [L(\vartheta,\widehat{\vartheta})]| &\le \frac{1}{n}\sum_{i=1}^n |\bE_{\bP_{\vartheta_i, X}}[\ell(\vartheta_i, \widehat{\vartheta}_i)] - \bE_{\bQ_{\vartheta_i, X}}[\ell(\vartheta_i, \widehat{\vartheta}_i)]| \\
&\le \frac{1}{n}\sum_{i=1}^n \sqrt{\chi^2(\bP_{\vartheta_i, X} \| \bQ_{\vartheta_i, X}) \var_{\bQ}[\ell(\vartheta_i, \widehat{\vartheta}_i)]}. 
\end{align*}
Therefore, it suffices to show that $\chi^2(\bP_{\vartheta_i, X} \| \bQ_{\vartheta_i, X}) \le e(\chi^2(\calP)+1)$ for all $i\in [n]$, where $\chi^2(\calP)$ is any upper bound in \Cref{thm:main} for $\calP=\{P_{\theta_1},\dots,P_{\theta_n}\}$. To this end, note that $\bP_{\theta_i, X_i} = \bQ_{\theta_i, X_i}$ and the Markov structure $X_i - \vartheta_i - X_{\backslash i}$ under both $\bP$ and $\bQ$, we have 
\begin{align*}
\chi^2(\bP_{\vartheta_i, X} \| \bQ_{\vartheta_i, X}) = \bE_{\vartheta_i}\qth{ \chi^2\pth{\bP_{X_{\backslash i} | \vartheta_i} \| \bQ_{X_{\backslash i} | \vartheta_i}}}. 
\end{align*}
Conditioned on $\vartheta_i = \theta_j$, it is clear that $\bP_{X_{\backslash i} | \vartheta_i}$ is the $(n-1)$-dimensional permutation mixture generated by $\{P_{\theta_k}: k\neq j\}$, and $\bQ_{X_{\backslash i} | \vartheta_i}$ is the i.i.d. distribution with marginal $\frac{1}{n}\sum_{i=1}^n P_{\theta_i}$. Defining an auxiliary distribution $\bQ_{n-1} = (\frac{1}{n-1}\sum_{k\neq j} P_{\theta_k})^{\otimes (n-1)}$, then 
\begin{align*}
\frac{\rmd \bQ_{X_{\backslash i}|\vartheta_i = \theta_j}}{\rmd \bQ_{n-1}} \ge \pth{\frac{n-1}{n}}^{n-1} \ge \frac{1}{e}
\end{align*}
almost surely. Consequently, 
\begin{align*}
\chi^2\pth{\bP_{X_{\backslash i} | \vartheta_i=\theta_j} \| \bQ_{X_{\backslash i} | \vartheta_i=\theta_j}} + 1 &= \int \frac{(\rmd \bP_{X_{\backslash i} | \vartheta_i=\theta_j})^2}{\rmd \bQ_{X_{\backslash i} | \vartheta_i=\theta_j}} \le e\int \frac{(\rmd \bP_{X_{\backslash i} | \vartheta_i=\theta_j})^2}{\rmd \bQ_{n-1}} \\
&= e\pth{\chi^2(\bP_{X_{\backslash i} | \vartheta_i=\theta_j} \| \bQ_{n-1}) +1} \stepa{\le} e(\chi^2(\calP)+1), 
\end{align*}
where (a) notes that the i.i.d. distribution $\bQ_{n-1}$ has the same one-dimensional marginal as $\bP_{X_{\backslash i} | \vartheta_i=\theta_j}$. This establishes the inequality \eqref{eq:mutual_info_1}. The second inequality \eqref{eq:consistency} follows from the concavity of $x\mapsto \sqrt{x}$, and the variance upper bound
\begin{align*}
\frac{1}{n}\sum_{i=1}^n \var_{\bQ}[\ell(\vartheta_i, \widehat{\vartheta}_i)] \le \frac{1}{n}\sum_{i=1}^n \bE_{\bQ}[\ell(\vartheta_i, \widehat{\vartheta}_i)^2] \le \frac{M}{n}\sum_{i=1}^n \bE_{\bQ}[\ell(\vartheta_i, \widehat{\vartheta}_i)] = M\cdot \bE_{\bQ}[L(\vartheta,\widehat{\vartheta})]. 
\end{align*}

\subsection{Proof of \Cref{lemma:EB-quadratic}}
We begin with a technical result. Let $P_0, \dots, P_n\in \calP$ be $n+1$ probability distributions, and for $i\in \{0,\dots,n\}$, define the permutation mixture $\bP_{-i}$ as
\begin{align*}
\bP_{-i} := \bE_{\pi\sim \Unif(S_{ \{0,\dots,n\} \backslash \{i\} })}\qth{\otimes_{j\neq i} P_{\pi(j)}}, 
\end{align*}
where $S_A$ denotes the permutation group over the finite set $A$. In other words, $\bP_{-i}$ is the permutation mixture of $\{P_0,\dots,P_n\} \backslash \{P_i\}$. The idea of \cite{greenshtein2009asymptotic} leads to the following inequality (though originally stated in a more complicated way): 
\begin{lemma}\label{lemma:Greenshtein-Ritov}
For $\overline{\bP} = \frac{1}{n+1}\sum_{i=0}^n \bP_{-i}$ and every $i\in \{0,\dots,n\}$ it holds that
\begin{align*}
\chi^2(\overline{\bP} \| \bP_{-i} ) \le \frac{\Diam}{n+1}. 
\end{align*}
In particular, for every $i,j\in \{0,\dots,n\}$ we have
\begin{align*}
    H^2(\bP_{-i}, \bP_{-j}) \le \frac{4\Diam}{n+1}. 
\end{align*}
\end{lemma}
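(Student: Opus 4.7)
The plan is to rewrite $\chi^2(\overline{\bP}\|\bP_{-i})$ as a variance and bound it via a \emph{swap coupling} that collapses the ratio $L_k := \rmd\bP_{-k}/\rmd\bP_{-i}$ to a conditional expectation of a sum of univariate likelihood ratios. Since $\overline{\bP}=\frac{1}{n+1}\sum_{k=0}^n\bP_{-k}$ and $L_i\equiv 1$, I would first observe that
\begin{align*}
\chi^2(\overline{\bP}\|\bP_{-i}) = \var_{\bP_{-i}}\pth{\frac{\rmd\overline{\bP}}{\rmd\bP_{-i}}} = \frac{1}{(n+1)^2}\var_{\bP_{-i}}\pth{\sum_{k\neq i}L_k}.
\end{align*}
Next I would augment $\bP_{-i}$ with the latent bijection $\pi:[n]\to\{0,\dots,n\}\setminus\{i\}$, so that $x_j\mid\pi\sim P_{\pi(j)}$ independently. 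For each $k\neq i$, the swap map that exchanges $i$ and $k$ in the image of $\pi$ is a measure-preserving bijection between the generative ensembles of $\bP_{-i}$ and $\bP_{-k}$; a short calculation then yields $L_k(x) = \bE[P_i(x_{\pi^{-1}(k)})/P_k(x_{\pi^{-1}(k)})\mid x]$, the conditional expectation being over the posterior on $\pi$. Reindexing $j=\pi^{-1}(k)$ collapses the sum to
\begin{align*}
\sum_{k\neq i}L_k(x) = \bE\qth{F(x,\pi)\mid x}, \qquad F(x,\pi) := \sum_{j=1}^n \frac{P_i(x_j)}{P_{\pi(j)}(x_j)}.
\end{align*}

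The law of total variance then gives $\var_{\bP_{-i}}\pth{\sum_{k\neq i}L_k} \le \var_{(x,\pi)}(F)$, and the right-hand side is elementary: conditional on $\pi$ the summands $F_j := P_i(x_j)/P_{\pi(j)}(x_j)$ are independent with $\bE[F_j\mid\pi]=1$ and $\bE[F_j^2\mid\pi]=1+\chi^2(P_i\|P_{\pi(j)})$. Averaging over $\pi$---whose one-coordinate marginal $\pi(j)$ is uniform on $\{0,\dots,n\}\setminus\{i\}$---produces $\bE F=n$, $\bE[F_jF_{j'}]=1$ for $j\neq j'$, and hence $\var(F) = \sum_{k\neq i}\chi^2(P_i\|P_k) \le n\Diam$. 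Combining everything delivers $\chi^2(\overline{\bP}\|\bP_{-i}) \le n\Diam/(n+1)^2 \le \Diam/(n+1)$.

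For the Hellinger inequality I would invoke the pointwise bound $(\sqrt t - 1)^2 \le (t-1)^2$, which gives $H^2\le\chi^2$, together with the symmetry of $H$, the triangle inequality for $H$, and $(a+b)^2 \le 2(a^2+b^2)$ to obtain
\begin{align*}
H^2(\bP_{-i},\bP_{-j}) \le 2\qth{H^2(\bP_{-i},\overline{\bP}) + H^2(\overline{\bP},\bP_{-j})} \le 2\qth{\chi^2(\overline{\bP}\|\bP_{-i}) + \chi^2(\overline{\bP}\|\bP_{-j})} \le \frac{4\Diam}{n+1}.
\end{align*}

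The main obstacle is identifying the swap coupling that reduces the ratio of two $n$-dimensional permanents to a conditional expectation of $n$ one-dimensional likelihood ratios. Without it, a naive convexity argument bounds $\chi^2(\overline{\bP}\|\bP_{-i})$ by $\frac{1}{n+1}\sum_{k\neq i}\chi^2(\bP_{-k}\|\bP_{-i})$, and the individual terms $\chi^2(\bP_{-k}\|\bP_{-i})$ are only controllable via \Cref{thm:two_mixtures}, thus incurring the $(e\AffH)^{3\Capa}$ factor we are trying to avoid. Once the swap structure is exposed, the variance identity (in particular the cancellation $\bE[F_jF_{j'}]=1$ for $j\neq j'$) delivers the sharp rate automatically.
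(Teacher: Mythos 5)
Your proof is correct and is essentially the paper's argument: the same single-coordinate swap coupling (replacing $P_k$ by $P_i$ in one slot of the product measure $P_\pi$) reduces everything to the variance of $\sum_{j} \rmd P_i(x_j)/\rmd P_{\pi(j)}(x_j)$, with the same cross-term cancellation $\bE[F_jF_{j'}]=1$ and the same triangle-inequality finish for Hellinger. The only cosmetic difference is that you apply Jensen by conditioning on $x$ (law of total variance) whereas the paper conditions on $\pi$ (joint convexity of $\chi^2$); these coincide here because $\var(F\mid\pi)=\sum_{k\neq i}\chi^2(P_i\|P_k)$ is constant in $\pi$.
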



The proof of \Cref{lemma:Greenshtein-Ritov} is deferred to the end of this section. Under the quadratic loss, it is shown in \cite{greenshtein2009asymptotic} that the optimal simple and permutation invariant decision rules are
\begin{align*}
\widehat{\vartheta}^{\mathrm{S}} = \pth{ \bE\qth{\vartheta_1 | X_1}, \dots, \bE\qth{\vartheta_n | X_n} }, \quad \text{and} \quad \widehat{\vartheta}^{\mathrm{PI}} = \pth{ \bE\qth{\vartheta_1 | X}, \dots, \bE\qth{\vartheta_n | X} }, 
\end{align*}
respectively, where the joint distribution of $(\vartheta,X)$ is given in the ``postulated Bayes model'' in the proof of \Cref{cor:EB}. In addition, the risk difference admits a simple form
\begin{align*}
    r^{\mathrm{S}}(\theta) - r^{\mathrm{PI}}(\theta) = \bE\qth{ \pth{\bE\qth{\vartheta_1 | X_1}-\bE\qth{\vartheta_1 | X}}^2 }. 
\end{align*}

Since
\begin{align*}
\bE\qth{\vartheta_1 | X_1}-\bE\qth{\vartheta_1 | X} = \sum_{i=1}^n \theta_i \pth{ P_{\vartheta_1 = \theta_i | X_1} - P_{\vartheta_1 = \theta_i | X} } = \sum_{i=1}^n \theta_i P_{\vartheta_1 = \theta_i | X_1} \bpth{1 - \frac{P_{X_2^n | \vartheta_1 = \theta_i}}{P_{X_2^n| X_1}}}
\end{align*}
according to the Bayes rule, we have
\begin{align*}
&\bE\qth{\pth{\bE\qth{\vartheta_1 | X_1}-\bE\qth{\vartheta_1 | X}}^2} \\
&= \bE_{X_1}\sum_{i=1}^n \sum_{j=1}^n \theta_i\theta_jP_{\vartheta_1 = \theta_i | X_1}P_{\vartheta_1 = \theta_j | X_1} \bE_{X_2^n|X_1}\qth{\pth{1 - \frac{P_{X_2^n | \vartheta_1 = \theta_i}}{P_{X_2^n| X_1}}}\pth{1 - \frac{P_{X_2^n | \vartheta_1 = \theta_j}}{P_{X_2^n| X_1}}}}. 
\end{align*}
Next we fix any realization of $X_1$. Let $w_i := P_{\vartheta_1 = \theta_i | X_1}$ and $P_i := P_{X_2^n| \vartheta_1=\theta_i}$, then $\sum_{i=1}^n w_i = 1$, and
\begin{align*}
\overline{P} := P_{X_2^n | X_1} = \sum_{i=1}^n w_i P_i. 
\end{align*}
Therefore, the entire term inside $\bE_{X_1}$ can be written as $\theta^\top M\theta$, where
\begin{align*}
M_{ij} = w_iw_j \bE_{\overline{P}}\qth{\pth{1-\frac{\rmd P_i}{\rmd \overline{P}}} \pth{1-\frac{\rmd P_j}{\rmd \overline{P}}}} = w_iw_j\pth{ \int \frac{\rmd P_i\rmd P_j}{\rmd \overline{P}} -1}. 
\end{align*}
Comparing with the matrix $A$ defined in \eqref{eq:A-matrix} and the centered version $\overline{A}$ before \eqref{eq:T_ell}, it is clear that
\begin{align*}
M = n\mathrm{diag}(w_1,\dots,w_n)\overline{A}\mathrm{diag}(w_1,\dots,w_n), 
\end{align*}
and therefore
\begin{align*}
\theta^\top M\theta \le n\lambda_{\max}(\overline{A}) \sum_{i=1}^n w_i^2\theta_i^2 \le nM^2\lambda_{\max}(\overline{A}) \|w\|_2^2.
\end{align*}
To upper bound the largest eigenvalue of $\overline{A}$, note that \Cref{lemma:Greenshtein-Ritov} implies that $H^2(P_i, P_j)\le \frac{4\Diam}{n}$ for all $i,j\in [n]$. Consequently, following the spectral gap argument in \Cref{lemma:A-property}, it holds that
\begin{align*}
\lambda_{\max}(\overline{A}) = \lambda_2(A) \le 1 - \pth{1-\max_{i,j}\frac{H^2(P_i,P_j)}{2}}^2 \le \max_{i,j}{H^2(P_i,P_j)} \le \frac{4\Diam}{n}. 
\end{align*}
In addition, 
\begin{align*}
\bE_{X_1}\qth{\|w\|_2^2} &= \bE_{X_1}\qth{ \sum_{i=1}^n P_{\vartheta_1 = \theta_i | X_1}^2 } = \sum_{i=1}^n \bE_{X_1}\qth{ \pth{\frac{\rmd P_{\theta_i}}{\sum_{j=1}^n \rmd P_{\theta_j} } }^2}\\
&= \frac{1}{n^2}\sum_{i=1}^n \bpth{1+\chi^2\bigg(P_{\theta_i} \bigg\| \frac{1}{n} \sum_{j=1}^n P_{\theta_j} \bigg) } \le \frac{1+\Capa}{n}. 
\end{align*}
A combination of the above inequalities shows that
\begin{align*}
\bE\qth{\pth{\bE\qth{\vartheta_1 | X_1}-\bE\qth{\vartheta_1 | X}}^2} = \bE_{X_1}[\theta^\top M\theta] &\le 4M^2 \Diam \cdot \bE_{X_1}\qth{\|w\|_2^2} \\
&\le \frac{4M^2 \Diam (1+\Capa)}{n}, 
\end{align*}
as claimed. 

\begin{proof}[Proof of \Cref{lemma:Greenshtein-Ritov}]
Without loss of generality assume that $i=0$. The proof is via a careful coupling in \cite{greenshtein2009asymptotic}: for $\pi\in S_n$ and $j\in [n]$, define joint distributions of $(X_1,\dots,X_n)$ as
\begin{align*}
P_{\pi} &:= P_{\pi(1)}\otimes \cdots \otimes P_{\pi(n)} \\
P_{\pi,j} &:= P_{\pi(1)}\otimes \cdots\otimes P_{\pi(\pi^{-1}(j)-1)} \otimes P_0 \otimes P_{\pi(\pi^{-1}(j)+1)} \otimes \cdots \otimes P_{\pi(n)}. 
\end{align*}
For $j=0$ we also define $P_{\pi,0} = P_\pi$. Since $\bP_{-j} = \bE_{\pi\sim \Unif(S_n)}[P_{\pi,j}]$ for all $j\in \{0,\dots,n\}$, the joint convexity of the $\chi^2$ divergence yields
\begin{align*}
\chi^2(\overline{\bP} \| \bP_{-0}) &= \chi^2\bpth{ \bE_{\pi\sim \Unif(S_n)}\bqth{ \frac{1}{n+1}\sum_{j=0}^{n} P_{\pi,j}  }  \bigg\| \bE_{\pi\sim \Unif(S_n)}\qth{P_{\pi} }} \\
&\le \bE_{\pi\sim \Unif(S_n)}\bqth{ \chi^2\bpth{\frac{1}{n+1}\sum_{j=0}^{n} P_{\pi,j}  \bigg\| P_{\pi} } } \\
&\stepa{\le} \frac{n}{n+1}\bE_{\pi\sim \Unif(S_n)}\bqth{ \chi^2\bpth{\frac{1}{n}\sum_{j=1}^{n} P_{\pi,j}  \bigg\| P_{\pi} } }.
\end{align*}
Here (a) uses $\frac{1}{n+1}\sum_{j=0}^{n} P_{\pi,j} = \frac{P_{\pi}}{n+1}+\frac{n}{n+1}\cdot \frac{1}{n}\sum_{j=1}^{n} P_{\pi,j}$ and the convexity again. To proceed we perform the second moment computation. By simple algebra, the likelihood ratio between $P_{\pi,j}$ and $P_{\pi}$ is
\begin{align*}
\frac{\rmd P_{\pi,j}}{\rmd P_{\pi}}(X_1,\dots,X_n) =
\frac{\rmd P_0}{\rmd P_j}(X_{\pi^{-1}(j)}).
\end{align*}
Consequently, for $j\neq j'$, 
\begin{align*}
\int \frac{\rmd P_{\pi,j} \rmd P_{\pi,j'}}{\rmd P_{\pi}} &= \bE_{(X_1,\dots,X_n)\sim P_{\pi}}\qth{ \frac{\rmd P_0}{\rmd P_j}(X_{\pi^{-1}(j)})\frac{\rmd P_0}{\rmd P_{j'}} (X_{\pi^{-1}(j')}) } \\
&= \bE_{X_{\pi^{-1}(j)} \sim P_j} \qth{ \frac{\rmd P_0}{\rmd P_j}(X_{\pi^{-1}(j)})}\bE_{X_{\pi^{-1}(j')} \sim P_{j'}}\qth{\frac{\rmd P_0}{\rmd P_{j'}} (X_{\pi^{-1}(j')}) } = 1,
\end{align*}
and
\begin{align*}
\int \frac{(\rmd P_{\pi,j})^2}{\rmd P_{\pi}} =\bE_{(X_1,\dots,X_n)\sim P_{\pi}}\qth{ \pth{\frac{\rmd P_0}{\rmd P_j}(X_{\pi^{-1}(j)})}^2} = \chi^2(P_0 \| P_j) + 1 \le \Diam + 1. 
\end{align*}
Therefore, the second moment method gives
\begin{align*}
\chi^2\bpth{\frac{1}{n}\sum_{j=1}^{n} P_{\pi,j}  \bigg\| P_{\pi} } = \frac{1}{n^2}\sum_{j,j'=1}^n \int \frac{\rmd P_{\pi,j} \rmd P_{\pi,j'}}{\rmd P_{\pi}} - 1 \le \frac{\Diam}{n}, 
\end{align*}
and we obtain the first claim. The second claim follows from
\begin{align*}
H^2(\bP_{-i}, \bP_{-j}) \stepb{\le} 2H^2(\overline{\bP}, \bP_{-i}) + 2H^2(\overline{\bP}, \bP_{-j}) \stepc{\le} 2\chi^2(\overline{\bP} \| \bP_{-i}) + 2\chi^2(\overline{\bP} \|\bP_{-j}) \le \frac{4\Diam}{n+1}, 
\end{align*}
where (b) is the triangle inequality of the Hellinger distance, and (c) uses $H^2(P,Q)\le \chi^2(P\|Q)$.    
\end{proof}

\subsection{Proof of \Cref{lemma:permanent}}
By the definition of $\chi^2$-divergence, 
\begin{align*}
\chi^2(\bP_n \| \bQ_n) + 1 = \bE_{\bQ_n}\qth{\pth{\frac{\rmd \bP_n}{\rmd \bQ_n}}^2} = \bE_{X_1,\ldots,X_n\sim \overline{P}}\qth{ \pth{\bE_{\pi\sim \Unif(S_n)} \qth{\prod_{i=1}^n \frac{\rmd P_{\pi(i)}}{\rmd \overline{P}}(X_i)} }^2 }. 
\end{align*}
By introducing an independent copy $\pi'\sim \Unif(S_n)$ of $\pi$, it holds that
\begin{align*}
\chi^2(\bP_n\|\bQ_n)+1 &= \bE_{\pi,\pi'\sim \Unif(S_n)} \sth{ \bE_{X_1,\ldots,X_{n} \sim \overline{P}} \qth{ \prod_{i=1}^{n} \frac{\rmd P_{\pi(i)} }{\rmd \overline{P}}(X_i)\prod_{i=1}^{\ell} \frac{\rmd P_{\pi'(i)}}{\rmd \overline{P}}(X_i)} } \\
&= \bE_{\pi,\pi'\sim \Unif(S_n)} \qth{ \prod_{i=1}^{n}\pth{\int \frac{\rmd P_{\pi(i)} \rmd P_{\pi'(i)}}{\rmd \overline{P}}  }} \\
&\overset{\prettyref{eq:A-matrix}}{=} \bE_{\pi,\pi'\sim \Unif(S_n)} \qth{\prod_{i=1}^{n} (nA_{\pi(i),\pi'(i)})} = \frac{n^n}{n!} \Perm(A), 
\end{align*}
as claimed. 

\subsection{Proof of \Cref{lemma:A-property}}
The definition of $\overline{P} = \frac{1}{n}\sum_{i=1}^n P_i$ trivially implies that $A$ is doubly stochastic. To see why $A$ is PSD, simply note that
\begin{align*}
    A_{ij} = \frac{1}{n}\bE_{X\sim \overline{P}}\qth{ \frac{\rmd P_i}{\rmd \overline{P}}(X) \frac{\rmd P_j}{\rmd \overline{P}}(X) }
\end{align*}
could be written as an inner product of $\frac{\rmd P_i}{\rmd \overline{P}}$ and $\frac{\rmd P_j}{\rmd \overline{P}}$. The trace upper bound is also straightforward: 
\begin{align*}
\trace(A) = \frac{1}{n}\sum_{i=1}^n \int \frac{(\rmd P_i)^2}{\rmd \overline{P}} = \frac{1}{n}\sum_{i=1}^n \chi^2(P_i \| \overline{P}) + 1 \le \Capa + 1. 
\end{align*}
As for the eigenstructure of $A$, the same leading eigenvalue/eigenvector holds for any doubly stochastic matrix. The only non-trivial property is the spectral gap. To this end, note that the Laplacian $L = I-A$ satisfies that
\begin{align*}
x^\top Lx = \sum_{1\le i<j\le n} A_{ij}(x_i - x_j)^2. 
\end{align*}
For each entry $A_{ij}$, we can lower bound it as
\begin{align*}
    A_{ij} = \frac{1}{n}\int \frac{\rmd P_i \rmd P_j}{\rmd \overline{P}} \stepa{\ge}  \frac{1}{n}\pth{\int \sqrt{\rmd P_i \rmd P_j}}^2 = \frac{1}{n}\pth{1-\frac{H^2(P_i,P_j)}{2}}^2 \ge \frac{1}{n\AffH}, 
\end{align*}
where (a) is due to the Cauchy--Schwarz inequality. Consequently, for any unit vector $x\in \bR^n$ with ${\bf 1}^\top x = 0$, we have
\begin{align*}
    x^\top L x \ge \frac{1}{n\AffH}\sum_{1\le i<j\le n}(x_i - x_j)^2 = \frac{1}{2n\AffH}\sum_{i,j=1}^n (x_i - x_j)^2 = \frac{1}{\AffH},
\end{align*}
which proves the spectral gap lower bound.

\subsection{Proof of \Cref{lemma:wick-formula}}\label{append:wick_formula_proof}
Let $P = (P_{ij})_{i\in [m], j\in [n]}$ and $z=(z_1,\ldots,z_n)$. Then
\begin{align*}
\bE\qth{ \prod_{i=1}^m |(Pz)_i|^2 } &= \bE\bqth{ \prod_{i=1}^m \Big|\sum_{j=1}^n P_{ij}z_j \Big|^2} \\
&= \bE\bqth{ \prod_{i=1}^m \bpth{\sum_{j=1}^n P_{ij}z_j} \bpth{\sum_{j=1}^n P_{ij}\bar{z}_j} } \\
&\stepa{=} \sum_{\pi\in S_m} \prod_{i=1}^m \bE\bqth{ \bpth{\sum_{j=1}^n P_{ij}z_j} \bpth{\sum_{j=1}^n P_{\pi(i)j}\bar{z}_j} } \\
&= \sum_{\pi\in S_m} \prod_{i=1}^m \bpth{\sum_{j=1}^n P_{ij}P_{\pi(i)j}} \\
&= \sum_{\pi\in S_m} \prod_{i=1}^m (PP^\top)_{i\pi(i)} = \Perm(PP^\top), 
\end{align*}
where (a) is due to Isserlis' theorem \cite{isserlis1918formula}. 

\subsection{Proof of \Cref{lemma:UB-empirical-bayes}}\label{append:proof_UB-empirical-bayes}
By the Cauchy--Schwarz inequality, for any probability measure $\bR_n$ it holds that
\begin{align*}
\TV(\bP_n, \bP_n')^2  = \pth{\frac{1}{2}\int |\rmd \bP_n - \rmd \bP_n'|}^2 \le \frac{1}{4}\int \frac{(\rmd \bP_n - \rmd \bP_n')^2}{\rmd \bR_n}. 
\end{align*}
Choosing $\bR_n = \overline{P}^{\otimes n}$ completes the proof of the first claim. The likelihood ratio between the signed measure $\bP_n - \bP_n'$ and the product measure $\overline{P}^{\otimes n}$ is computed as
\begin{align*}
\frac{\rmd(\bP_n - \bP_n')}{\rmd \overline{P}^{\otimes n}}(X) &\stepa{=} \bE_{I\sim \Unif([n]), \pi: [n]\backslash \{I\} \leftrightarrow \{2,\ldots,n\}}\bqth{\frac{\rmd (P_1-P_1')}{\rmd \overline{P}}(X_I)\cdot \prod_{i\neq I} \frac{\rmd P_{\pi(i)}}{\rmd \overline{P}}(X_i) }  \\
&\stepb{=} \bE_{I\sim \Unif([n]), \pi: [n]\backslash \{I\} \leftrightarrow \{2,\ldots,n\}} \bqth{\frac{\rmd (P_1-P_1')}{\rmd \overline{P}}(X_I)\cdot  \sum_{T\subseteq [n]\backslash \{I\}}\prod_{i\in T} \frac{\rmd (P_{\pi(i)}-\overline{P})}{\rmd \overline{P}}(X_i) }  \\
&\stepc{=} \sum_{S\subseteq [n]: |S|\ge 1} \bpth{\frac{1}{n}\sum_{i\in S} \frac{\rmd (P_1-P_1')}{\rmd \overline{P}}(X_i) \cdot \bE_{\pi: [n]\backslash \{i\} \leftrightarrow \{2,\ldots,n\}} \prod_{j\in S \backslash \{i\}} \frac{\rmd (P_{\pi(j)}-\overline{P})}{\rmd \overline{P}}(X_j) } \\
&=: \sum_{S\subseteq [n]: |S|\ge 1} f_S(X), 
\end{align*}
where (a) decomposes the random permutation over $S_n$ into a random choice of $I = \pi^{-1}(1)$ and a random bijection between the rest (i.e. $[n]\backslash \{i\}$ and $\{2,\cdots,n\}$), and (b) is the identity $\prod_{i=1}^n (1+x_i) = \sum_{T\subseteq [n]}\prod_{i\in T} x_i$. In (c), we expand the expectation over $I\sim \Unif([n])$ and swap the sum as $\sum_{i=1}^n \sum_{T\subseteq [n]\backslash \{i\}} g_i(T) = \sum_{S\subseteq [n]: |S|\ge 1} \sum_{i\in S} g_i(S\backslash \{i\})$. 

Next, we argue that the functions $\{f_S(X): S\subseteq [n], |S|\ge 1\}$ are orthogonal under $\overline{P}^{\otimes n}$. Since $f_{S}(X)$ could be expressed as $\frac{1}{n}\sum_{i\in S} \bE_{\pi \sim \mu_i}\qth{g_{S,i,\pi}(X)}$ for some probability measure $\mu_i$ and function $g_{S,i,\pi}(X)$, it suffices to show the orthogonality between $g_{S,i,\pi}(X)$ and $g_{T,i',\pi'}(X)$ for all $S\neq T$ and $(i,i',\pi,\pi')$. To see so, simply note that
\begin{align*}
g_{S,i,\pi}(X) = \frac{\rmd (P_1-P_1')}{\rmd \overline{P}}(X_i) \prod_{j\in S \backslash \{i\}} \frac{\rmd (P_{\pi(j)}-\overline{P})}{\rmd \overline{P}}(X_j)
\end{align*}
is a product of zero-mean functions of $\{X_j: j\in S\}$ under $\overline{P}^{\otimes n}$, so the orthogonality follows. This orthogonality yields
\begin{align*}
&\int \frac{(\rmd \bP_n - \rmd \bP_n')^2}{\rmd \overline{P}^{\otimes n}} = \bE_{\overline{P}^{\otimes n}}\bqth{\bpth{\sum_{S\subseteq [n]: |S|\ge 1} f_S(X)}^2} = \sum_{S\subseteq [n]: |S|\ge 1} \bE_{\overline{P}^{\otimes n}} 
\qth{f_S(X)^2}  \\
&\stepd{=} \sum_{\ell=1}^n \binom{n}{\ell} \bE_{\overline{P}^{\otimes n}} 
\qth{f_{[\ell]}(X)^2} \\
&\stepe{\le} \sum_{\ell=1}^n \binom{n}{\ell}  \frac{\ell}{n^2}\sum_{i=1}^{\ell} \bE_{X_1,\ldots,X_\ell\sim \overline{P}}\bqth{ \bpth{\frac{\rmd (P_1-P_1')}{\rmd \overline{P}}(X_i) \cdot \bE_{\pi: [n]\backslash \{i\} \leftrightarrow \{2,\ldots,n\}} \prod_{j\in [\ell] \backslash \{i\}} \frac{\rmd (P_{\pi(j)}-\overline{P})}{\rmd \overline{P}}(X_j)}^2 } \\
&\stepf{=} \sum_{\ell=1}^n \binom{n}{\ell}  \frac{\ell^2}{n^2} \int \frac{(\rmd P_1 - \rmd P_1')^2}{\rmd \overline{P}}\cdot \bE_{X_2,\ldots,X_{\ell}\sim \overline{P}}\bqth{ \bpth{\bE_{\pi: \{2,\ldots,n\} \leftrightarrow \{2,\ldots,n\}} \prod_{j=2}^{\ell} \frac{\rmd (P_{\pi(j)}-\overline{P})}{\rmd \overline{P}}(X_j)}^2}, 
\end{align*}
where (d) notes that the second moment of $f_S$ only depends on the size of $S$, (e) uses the Cauchy--Schwarz inequality $(\sum_{i=1}^\ell x_i)^2 \le \ell\sum_{i=1}^\ell x_i^2$, and (f) notes that the inner expectation does not depend on $i$. To proceed, note that
\begin{align*}
\int \frac{(\rmd P_1 - \rmd P_1')^2}{\rmd \overline{P}} &\le 2\int\frac{(\rmd P_1 - \rmd \overline{P})^2}{\rmd \overline{P}} + 2\int\frac{(\rmd P_1' - \rmd \overline{P})^2}{\rmd \overline{P}} \\
&= 2(\chi^2(P_1\|\overline{P}) + \chi^2(P_1'\|\overline{P})) \le 4\Diam, 
\end{align*}
and 
\begin{align*}
\bE_{X_2,\ldots,X_{\ell}\sim \overline{P}}\bqth{ \bpth{\bE_{\pi: \{2,\ldots,n\} \leftrightarrow \{2,\ldots,n\}} \prod_{j=2}^{\ell} \frac{\rmd (P_{\pi(j)}-\overline{P})}{\rmd \overline{P}}(X_j)}^2} = R_{\ell-1} = \frac{S_{\ell-1}}{\binom{n-1}{\ell-1}}
\end{align*}
by the definition of $R_{\ell}$ in \eqref{eq:R_ell} and the identity between $R_{\ell}$ and $S_{\ell}$ in \Cref{lemma:RST}, where the matrix $A\in \bR^{(n-1)\times (n-1)}$ is now constructed from $(P_2,\ldots,P_n)$. A combination of all the above then gives
\begin{align*}
\TV(\bP_n, \bP_n')^2 \le \frac{1}{4}\sum_{\ell=1}^n \binom{n}{\ell}\frac{\ell^2}{n^2}\cdot 4\Diam\frac{S_{\ell-1}}{\binom{n-1}{\ell-1}} = \frac{\Diam}{n}\sum_{\ell=1}^n \ell S_{\ell-1}, 
\end{align*}
as claimed. 

\subsection{Proof of \Cref{lemma:wick-formula-2}}\label{append:wick-formula-2-proof}
Let $v = (v_1,\cdots,v_n) := \widetilde{P}z  \sim \calCN(0, \widetilde{P}\widetilde{P}^\top)$, where
\begin{align*}
\widetilde{P}\widetilde{P}^\top = \widetilde{U}\widetilde{D}\widetilde{U}^\top = A - \lambda_1 u_1 u_1^\top = A - \frac{1}{n}{\bf 1}{\bf 1}^\top = \overline{A}, 
\end{align*}
where $\lambda_1 = 1$ and $u_1 = {\bf 1}/\sqrt{n}$ are the leading eigenvalue and eigenvector of $A$, respectively (cf. \Cref{lemma:A-property}), and $\overline{A}$ is the centered version of $A$ defined above \eqref{eq:T_ell}. Consequently, 
\begin{align*}
\bE\qth{|e_{\ell}(v)|^2} &= \bE\bqth{ \Big| \sum_{S\subseteq [n]: |S|=\ell} \prod_{i\in S} v_i \Big|^2 }  = \sum_{\substack{S,S'\subseteq [n]\\ |S| = |S'|=\ell}}  \bE\bqth{ \prod_{i\in S} v_i \prod_{j\in S'} \bar{v}_j} \\
&\stepa{=} \sum_{\substack{S,S'\subseteq [n]\\ |S| = |S'|=\ell}} \sum_{\pi: S \leftrightarrow S'} \prod_{i\in S} \bE\qth{v_i \bar{v}_{\pi(i)}} \\
&\stepb{=} \sum_{\substack{S,S'\subseteq [n]\\ |S| = |S'|=\ell}} \sum_{\pi: S \leftrightarrow S'} \prod_{i\in S} \overline{A}_{i\pi(i)} = \sum_{\substack{S,S'\subseteq [n]\\ |S| = |S'|=\ell}} \Perm(\overline{A}_{S,S'}) \overset{\prettyref{eq:T_ell}}{=} T_{\ell}, 
\end{align*}
where (a) is due to Isserlis' theorem \cite{isserlis1918formula} and the observation $\bE[v_iv_j] = \bE[\bar{v}_i\bar{v}_j]=0$ for all $i,j\in [n]$ (here $\pi: S\leftrightarrow S'$ denotes that $\pi$ is a bijection between $S$ and $S'$), and (b) follows from $v\sim \calCN(0,\overline{A})$. Then the claimed result is a direct consequence of the identity between $S_\ell$ and $T_\ell$ in \Cref{lemma:RST}.

\subsection{Proof of \Cref{lemma:tightness}}
Fix any $n$ and the choices $P_1, \dots, P_n\in \calP$. For a large integer $m$, consider the probability measures $\bP_{mn}$ and $\bQ_{mn}$ based on $mn$ distributions $\sth{P_1,\dots,P_1,P_2,\dots,P_2,\dots,P_n}$, where each $P_i$ appears $m$ times. Let $\overline{P} = \frac{1}{n}\sum_{i=1}^n P_i$, and $f\in L^2(\overline{P})$ be any function with $\bE_{\overline{P}}[f]=0$. We use $f$ as the test function, and study the distributions of the test statistic $\frac{1}{\sqrt{mn}}\sum_{i=1}^{mn} f(X_i)$ under both $\bP_{mn}$ and $\bQ_{mn}$, denoted by $\bP_{f,m}$ and $\bQ_{f,m}$, respectively.

Under $\bP_{mn}$, it is clear that
\begin{align*}
\frac{1}{\sqrt{mn}}\sum_{i=1}^{mn} f(X_i) \overset{\mathrm{d}}{=} \frac{1}{\sqrt{mn}}\sum_{i=1}^n \sum_{j=1}^m f(Z_{i,j}), 
\end{align*}
where $Z_{i,1},\dots,Z_{i,m}\overset{\mathrm{i.i.d.}}{\sim} P_i$ for all $i\in [n]$, and are mutually independent for different $i\in [n]$. By CLT, it is then clear that 
\begin{align*}
\bP_{f,m} \rightsquigarrow \calN\pth{0, \frac{1}{n}\sum_{i=1}^n \var_{P_i}(f) } \qquad \text{as } m \to \infty, 
\end{align*}
where $\rightsquigarrow$ denotes the weak convergence of probability measures. Under $\bQ_{mn}$, it is clear that $X_1,\dots,X_{mn}$ follow an i.i.d. distribution $\overline{P}$, so CLT gives
\begin{align*}
\bQ_{f,m} \rightsquigarrow \calN\pth{0, \var_{\overline{P}}(f) } \qquad \text{as } m \to \infty. 
\end{align*}
Based on the limiting distributions, the $\chi^2$-divergence can be lower bounded as
\begin{align*}
\liminf_{m\to\infty} \chi^2(\bP_{mn} \| \bQ_{mn}) &\stepa{\ge} \liminf_{m\to\infty} \chi^2(\bP_{f,m}\|\bQ_{f,m}) \\
&\stepb{\ge} \chi^2\pth{\calN\pth{0, \frac{1}{n}\sum_{i=1}^n \var_{P_i}(f) }\| \calN\pth{0, \var_{\overline{P}}(f) }} \\
&\stepc{=} \qth{1-\pth{1- \frac{ \frac{1}{n}\sum_{i=1}^n \var_{P_i}(f)}{\var_{\overline{P}}(f)} }^2}^{-\frac{1}{2}}-1, 
\end{align*}
where (a) is due to the data-processing inequality, (b) is the lower-semicontinuity of the $\chi^2$ divergence (using arguments similar to \cite[Theorem 4.9]{polyanskiy2024information}), and (c) uses
\begin{align*}
\chi^2(\calN(0,\sigma_1^2) \| \calN(0,\sigma_2^2)) = \sqrt{\frac{\sigma_2^4}{\sigma_1^2(2\sigma_2^2-\sigma_1^2)}} - 1 = \pth{1 - \pth{1-\frac{\sigma_1^2}{\sigma_2^2}}^2}^{-\frac{1}{2}} - 1
\end{align*}
as long as $\sigma_2\ge \sigma_1$, where we also note that $\var_{\overline{P}}(f)\ge \frac{1}{n}\sum_{i=1}^n \var_{P_i}(f)$ by the concavity of variance. Consequently, it remains to evaluate
\begin{align*}
S := \sup_{f\in L^2(\overline{P}): \bE_{\overline{P}}[f] = 0} \pth{1- \frac{ \frac{1}{n}\sum_{i=1}^n \var_{P_i}(f)}{\var_{\overline{P}}(f)}} = \sup_{f\in L^2(\overline{P}): \bE_{\overline{P}}[f] = 0} \frac{\frac{1}{n}\sum_{i=1}^n\pth{\bE_{P_i}[f]}^2 }{\bE_{\overline{P}}[f^2]}. 
\end{align*}

We show that $S \ge \lambda_2(A)$, the second largest eigenvalue of the matrix $A$ constructed from \eqref{eq:A-matrix}. To this end, we choose
\begin{align*}
    f = \sum_{j=1}^n u_j\frac{\rmd P_j}{\rmd \overline{P}}, 
\end{align*}
with $u=(u_1,\dots,u_n)$ being the unit-length eigenvector of $A$ associated with $\lambda_2(A)$. Clearly $\bE_{\overline{P}}[f]=\sum_{j=1}^n u_j = 0$ by the orthogonality of $u$ and $\bf 1$ in \Cref{lemma:A-property}, and
\begin{align*}
\frac{1}{n}\sum_{i=1}^n\pth{\bE_{P_i}[f]}^2 &= \frac{1}{n}\sum_{i=1}^n\pth{ n \sum_{j=1}^n A_{ij}u_j }^2 = \frac{1}{n} \sum_{i=1}^n (n\lambda_2(A)u_i)^2 = n\lambda_2(A)^2 \\
\bE_{\overline{P}}[f^2] &= n\sum_{i=1}^n\sum_{j=1}^n A_{ij}u_iu_j = n\sum_{i=1}^n \lambda_2(A) u_i^2 = n\lambda_2(A). 
\end{align*}
This shows that $S\ge \lambda_2(A)$.\footnote{In fact $S = \lambda_2(A)$ holds, but we do not need this upper bound.}

A combination of the above steps then gives
\begin{align*}
\liminf_{m\to\infty} \chi^2(\bP_{mn} \| \bQ_{mn}) \ge \frac{1}{\sqrt{1-\lambda_2(A)^2}}-1,
\end{align*}
and taking the supremum over the choice of $n$ and $P_1,\dots,P_n\in \calP$ leads to the first result. For the second result, pick $n=2$, so that
\begin{align*}
    \lambda_2(A) = \trace(A) - 1 = \frac{1}{2}\int \frac{(\rmd P_1 - \rmd P_2)^2}{\rmd P_1 + \rmd P_2} = \LC(P_1, P_2) \ge \frac{H^2(P_1,P_2)}{2}. 
\end{align*}
Here $\LC(P_1,P_2)$ denotes the Le Cam distance, and the last inequality follows from \cite[Eqn. (7.35)]{polyanskiy2024information}. Consequently, the second statement follows from
\begin{align*}
\frac{1}{\sqrt{1-\lambda_2(A)^2}}-1 \ge \frac{\lambda_2(A)^2}{2}, \quad \text{and} \quad \frac{1}{\sqrt{1-\lambda_2(A)^2}}-1 \ge \frac{1}{\sqrt{2(1-\lambda_2(A))}} - 1, 
\end{align*}
and taking the supremum over $P_1,P_2\in \calP$. 

\subsection{Proof of \Cref{lemma:tightness-perm}}
The constants $\sfC_0 \ge 3$ and $\Delta_0 \le \frac{1}{4}$ will be specified later in the proof. Given $\sfC \ge \sfC_0$ and $\Delta \le \Delta_0$, construct a matrix $A\in \bR^{mn\times mn}$ with
\begin{align*}
    A_{ij} = \begin{cases}
        \frac{\Delta}{mn} + \frac{1-\Delta}{n} &\text{if } i,j\in [(k-1)n+1, kn] \text{ for some }k\in [m], \\
        \frac{\Delta}{mn} &\text{otherwise,}
    \end{cases}
\end{align*}
with $m = \lceil \sfC \rceil$, and $n\in \naturals$ to be specified later. In other words, $A$ is the Kronecker product
\begin{align*}
A = \pth{\frac{\Delta}{m}J_m + (1-\Delta)I_m} \otimes \frac{J_n}{n}, 
\end{align*}
where $I$ and $J$ are the identity and all-ones matrices, respectively. By simple algebra, $A$ is doubly stochastic, and has eigenvalues $\lambda_1(A) = 1, \lambda_2(A) = \cdots = \lambda_m(A) = 1-\Delta$, and $\lambda_{m+1}(A) = \cdots = \lambda_{mn}(A) = 0$. In particular, $\trace(A) \le m < 1 + \sfC$, and the spectral gap of $A$ is $\Delta$. 

It remains to lower bound the permanent $\Perm(A)$. Restricting only to the diagonal blocks, one has
\begin{align*}
\Perm(A) \ge (n!)^m \pth{\frac{\Delta}{mn} + \frac{1-\Delta}{n}}^{mn} \ge (n!)^m \pth{\frac{1-\Delta}{n}}^{mn}. 
\end{align*}
By Stirling's approximation $\sqrt{2\pi n}\pth{\frac{n}{e}}^n \le n!\le \sqrt{2\pi n}\pth{\frac{n}{e}}^n \exp\pth{\frac{1}{12n}}$, we have
\begin{align*}
\Perm(A) \ge (2\pi n)^{\frac{m}{2}} \pth{\frac{1-\Delta}{e}}^{mn}\ge \frac{(mn)!}{(mn)^{mn}}\cdot \frac{(2\pi n)^{\frac{m}{2}}}{\sqrt{2\pi mn}}(1-\Delta)^{mn}\exp\pth{-\frac{1}{12mn}}. 
\end{align*}
Now we choose $n = \lceil \frac{1}{2\log\pth{\frac{1}{1-\Delta}}} \rceil \ge 2$, so that
\begin{align*}
\Perm(A) \ge \frac{(mn)!}{(mn)^{mn}}\cdot (2\pi n)^{\frac{m-1}{2}} \frac{(1-\Delta)^{m} e^{-\frac{m}{2}}}{e^{1/12} \sqrt{m} } \stepa{\ge} \frac{(mn)!}{(mn)^{mn}}\cdot \pth{\frac{3}{\Delta}}^{\frac{m-1}{2}} \frac{(1-\Delta)^{m} e^{-\frac{m}{2}}}{e^{1/12} \sqrt{m}}, 
\end{align*}
where (a) uses that $n\sim \frac{1}{2\Delta}$ as $\Delta\to 0$, and thus $2\pi n \ge \frac{3}{\Delta}$ as long as $\Delta\le \Delta_0$ is small enough. Moreover, since the leading exponential term of
\begin{align*}
3^{\frac{m-1}{2}} \frac{(1-\Delta)^{m} e^{-\frac{m}{2}}}{e^{1/12} \sqrt{m}}
\end{align*}
is $\pth{\frac{3}{e}(1-\Delta)^2}^{m/2}$, the above quantity is no smaller than $1$ for $\sfC_0$ large enough and $\Delta_0$ small enough. Consequently, under the above choice of $(\sfC_0,\Delta_0)$, it holds that
\begin{align*}
\Perm(A)\ge \frac{(mn)!}{(mn)^{mn}} \pth{\frac{1}{\Delta}}^{\frac{\sfC}{3}}, 
\end{align*}
i.e. the first statement of the lemma holds with $r = \frac{1}{3}$. 

For the second statement, let $\calP = \{P_1,\dots,P_m\}$ with $m = \lceil \sfC \rceil$, and
\begin{align*}
P_i = \sqrt{\Delta}\delta_0 + (1-\sqrt{\Delta})\delta_i
\end{align*}
Here $\delta_x$ is the Dirac delta measure on the singleton $\{x\}$. It is clear that $\Capa \le |\calP| - 1 < \sfC$ (cf. \Cref{lemma:chi2-capacity}), and $\AffH = \frac{1}{\Delta}$. Now consider $\chi^2(\bP_{mn} \| \bQ_{mn})$, with each $P_i$ appearing $n$ times. By \Cref{lemma:permanent}, 
\begin{align*}
\chi^2(\bP_{mn} \| \bQ_{mn}) = \frac{(mn)^{mn}}{(mn)!} \Perm(A) - 1, 
\end{align*}
where the matrix $A\in \bR^{mn\times mn}$ is given by
\begin{align*}
A = \pth{ \frac{\sqrt{\Delta}}{m} + (1-\sqrt{\Delta})I_m } \otimes \frac{J_n}{n}. 
\end{align*}
By the proof of the first part, as long as $\sfC \ge \sfC_0$ is large enough and $\sqrt{\Delta} \le \sqrt{\Delta_0}$ is small enough, there is a suitable choice of $n\in \mathbb{N}$ such that
\begin{align*}
    \chi^2(\bP_{mn} \| \bQ_{mn}) \ge \pth{\frac{1}{\sqrt{\Delta}}}^{\frac{\sfC}{3}} - 1 = \pth{\frac{1}{\Delta}}^{\frac{\sfC}{6}}-1, 
\end{align*}
establishing the second part with $r' = \frac{1}{6}$.

\subsection{Proof of \Cref{lemma:permutation_channel}}
For the capacity upper bound, let $Q_1,\dots,Q_m$ be an $\varepsilon$-cover of $\calP_n$ under the KL divergence, with a fixed $\varepsilon>0$ and $m = N_{\mathrm{KL}}(\calP_n, \varepsilon)$. By the standard entropic upper bound of the mutual information (cf. \cite{yang1999information} or \cite[Theorem 32.4]{polyanskiy2024information}), one has
\begin{align*}
I(X^n;Y^n)&\le \bE_{X^n}\qth{ \min_{i\in [m]} \KL( P_{Y^n|X^n} \| Q_i^{\otimes n} ) } + \log m\\
&\stepb{=} \bE_{X^n}\bqth{ \KL( P_{Y^n|X^n} \| \overline{P}^{\otimes n} ) + \min_{i\in [m]} \bE_{P_{Y^n|X^n}}\qth{\log \frac{\rmd \overline{P}^{\otimes n}}{\rmd Q_i^{\otimes n}}} } + \log m \\
&\stepc{=} \bE_\theta\qth{ \KL( P_{Y^n|X^n} \| \overline{P}^{\otimes n} ) + \min_{i\in [m]} n\cdot \KL(\overline{P} \| Q_i) } + \log m \\
&\stepd{\le} \Capa(1+\log\AffH) + n\varepsilon^2 + \log m, 
\end{align*}
where $\overline{P}$ denotes the average distribution $\frac{1}{n}\sum_{i=1}^n \sfK_{X_i}$ in (b), (c) uses $\Law(Y_i|X^n) = \overline{P}$ for all $i\in [n]$, and (d) follows from \Cref{thm:main} and the definition of the KL covering applied to $\overline{P}\in \calP_n$. Taking the infimum over $\varepsilon>0$ then completes the proof of the upper bound.

For the capacity lower bound, let $Q_1,\dots,Q_m$ be an $\varepsilon$-packing of $\calP_n$ under the Hellinger distance, with a fixed $\varepsilon>0$ and $m = M_{\mathrm{H}}(\calP_n, \varepsilon)$. By the definition of $\calP_n$, each $Q_i$ can be expressed as
\begin{align*}
Q_i = \frac{1}{n}\sum_{j=1}^n \sfK_{x_{ij}}, \quad \text{for some }x_i^n :=(x_{i1},\dots,x_{in})\in \calX^n. 
\end{align*}
Now let $X^n \sim \Unif(\sth{x_1^n, \dots, x_m^n})$. By \cite[Lemma 3]{haussler1997mutual}, it holds that
\begin{align}\label{eq:HO}
I(X^n; Y^n) \ge -\frac{1}{m}\sum_{i=1}^m \log \bpth{\frac{1}{m}\sum_{j=1}^m \exp\pth{-\frac{1}{2}D_{1/2}( P_{Y^n|X^n=x_i^n}, P_{Y^n|X^n=x_j^n} ) } }, 
\end{align}
where $D_{1/2}(P,Q) = -2\log(1-H^2(P,Q)/2) = -2\log\int \sqrt{\rmd P \rmd Q}$ is the $\frac12$-R\'enyi divergence. To proceed, we prove the following perturbation bound: for any $P', Q'$, it holds that
\begin{align}\label{eq:perturbation}
D_{1/2}(P,Q)\ge \frac{1}{2}\pth{D_{1/2}(P',Q') - \log(\chi^2(P\|P')+1) - \log(\chi^2(Q\|Q')+1)}. 
\end{align}
Indeed, by H\"older's inequality, we have
\begin{align*}
\pth{\int \sqrt{\rmd P\rmd Q}}^4 \le \pth{\int \sqrt{\rmd P' \rmd Q'}}^2 \pth{\int \frac{(\rmd P)^2}{\rmd P'}} \pth{\int \frac{(\rmd Q)^{2}}{\rmd Q'}}, 
\end{align*}
so that taking the logarithm gives \eqref{eq:perturbation}. Next, for $i\neq j$, choosing $P = P_{Y^n|X^n=x_i^n}, Q=P_{Y^n|X^n=x_j^n}, P' = Q_i^{\otimes n}$, and $Q' = Q_j^{\otimes n}$ in \eqref{eq:perturbation} gives
\begin{align*}
D_{1/2}( P_{Y^n|X^n=x_i^n}, P_{Y^n|X^n=x_j^n} ) &\ge \frac{1}{2}\pth{ nD_{1/2}(Q_i, Q_j) - 2\Capa(1+\AffH) } \\
&\ge \frac{n\varepsilon^2}{2} - \Capa(1+\AffH), 
\end{align*}
for $D_{1/2}(Q_i, Q_j)\ge H^2(Q_i, Q_j)\ge \varepsilon^2$. Therefore, \eqref{eq:HO} gives
\begin{align*}
I(X^n; Y^n) &\ge - \log\pth{\exp\pth{-\frac{n\varepsilon^2}{4} + \frac{1}{2}\Capa(1+\AffH)} + \frac{1}{m} } \\
&\ge \min\sth{ \log m, \frac{n\varepsilon^2}{4} - \frac{1}{2}\Capa(1+\AffH) } - \log 2, 
\end{align*}
using $\frac{1}{a}+\frac{1}{b}\le \frac{2}{\min\{a,b\}}$ for $a,b>0$. Taking the supremum over $\varepsilon>0$ proves the lower bound.

\bibliographystyle{alpha}
\bibliography{ref.bib}

\end{document}